\colorlet{NavyBlue}{blue!70!black!90}
\colorlet{PineGreen}{black!50!green}
\theoremstyle{plain}
\newtheorem{theorem}{Theorem}[subsection]
\newtheorem{lemma}[theorem]{Lemma}
\newtheorem{prop}[theorem]{Proposition}
\newtheorem{corollary}[theorem]{Corollary}
\newcommand{\cC}{\mathscr{C}}
\newcommand{\cL}{\mathscr{L}}
\newcommand{\cP}{\mathscr{P}}
\newcommand{\RR}{\mathbb{R}}
\newcommand{\ZZ}{\mathbb{Z}}
\DeclareMathOperator{\ch}{ch}
\DeclareMathOperator{\GL}{GL}
\DeclareMathOperator{\id}{id}
\DeclareMathOperator{\rk}{rk}
\DeclareMathOperator{\wt}{wt}
\DeclareMathOperator{\Ker}{Ker}
\theoremstyle{remark}
\newtheorem{remark}[theorem]{Remark}
\theoremstyle{definition}
\newtheorem{example}[theorem]{Example}
\newtheorem{definition}[theorem]{Definition}
\newtheorem{construction}[theorem]{Construction}
\begin{document}

\title{Lascoux polynomials and subdivisions of Gelfand--Zetlin polytopes}
\author{Ekaterina Presnova}
\address{HSE University, Usacheva 6, 119048 Moscow, Russia}
\email{epresnova@hse.ru, ekaterina.presnova@gmail.com}
\author{Evgeny Smirnov}
\address{HSE University, Usacheva 6, 119048 Moscow, Russia}
\address{Independent University of Moscow, Bolshoi Vlassievskii pereulok 11, 119002 Moscow, Russia}
\address{Guangdong Technion -- Israel Institute of Technology,  Daxue road 241, Shantou, Guangdong, 515063 China}
\email{esmirnov@hse.ru, evgeny.smirnov@gmail.com}

\thanks{This work was supported by the HSE University Basic Research Program and by Basis Foundation Fellowship ``Junior Leader''}

\begin{abstract}
    We give a new combinatorial description for Grassmannian Grothendieck polynomials in terms of subdivisions of Gelfand--Zetlin polytopes. Moreover, these subdivisions also provide a description of Lascoux polynomials. This generalizes a similar result on key polynomials by Kiritchenko, Smirnov, and Timorin. 
\end{abstract}

\maketitle


\section{Introduction}

In this paper, we provide a new combinatorial description of Lascoux polynomials in terms of subdivisions of Gelfand--Zetlin polytopes and certain collections of their faces. Lascoux polynomials, denoted by $\cL_\alpha$, form a basis for $\ZZ[\beta][x_1,x_2,\dots]$, where $\alpha$ runs over the set of weak compositions  (i.e., infinite sequences of nonnegative integers with finitely many positive entries). They simultaneously generalize key polynomials, the characters of Demazure modules, and Grassmannian Grothendieck polynomials; the latter family represents classes of structure sheaves of Schubert varieties in the connective $K$-theory of a Grassmannian, as shown by A.\,Buch~\cite{Buch02}. Both of these families are superfamilies of Schur polynomials.

Lascoux polynomials were defined by A.\,Lascoux~\cite{Lascoux04} in terms of homogeneous divided difference operators; just as other remarkable families of polynomials defined via these operatorsx, they have nonnegative coefficients. Originally, Lascoux defined these  polynomials without variable $\beta$. The introduction of the grading variable $\beta$ is motivated by the connective $K$-theory and goes back to works of S.\,Fomin and An.\,Kirillov \cite{FominKirillov96}, \cite{FominKirillov94}.

Although Lascoux polynomials do not have a description in geometric or representation-theoretic terms, they admit several combinatorial descriptions. V.\,Buciumas, T.\,Scrim\-shaw, and K.\,Weber~\cite{BSW20} establish their connection to the five-vertex model. T.\,Yu~\cite{Yu21} provides a description in terms of set-valued tableaux, simultaneously generalizing Buch's description of Grassmannian Grothendieck polynomials in terms of set-valued Young tableaux and Lascoux--Sch\"utzenberger's tableau description of key polynomials. 

Lascoux polynomials $\cL_\alpha$ specialized at $\beta=0$ are equal to key polynomials. Suppose $w\in S_n$ is a permutation such that $\alpha=(\alpha_1,\dots,\alpha_n)=w(\lambda)$ for a suitable partition $\lambda=(\lambda_1,\dots,\lambda_n)$. The key polynomials $\kappa_\alpha=\kappa_{w,\lambda}$ are defined as the characters of Demazure modules $D_{w,\lambda}$, i.e. $B$-submodules in the irreducible $\GL(n)$-representation $V_\lambda$ with the highest weight $\lambda$. The module $D_{w,\lambda}$ is defined as the smallest $B$-submodule containing the extremal vector $wv_\lambda\in V_\lambda$, where $B\subset \mathrm{GL}(n)$ is a fixed Borel subgroup. Demazure modules were defined by M.\,Demazure in~\cite{Demazure74}; in the same paper the character formula for $D_{w,\lambda}$ was stated. However, as it was pointed out by V.\,Kac, its proof contained a gap; a correct proof appeared more than ten years later, in H.\,H.\,Andersen's work~\cite{Andersen85}. The first combinatorial interpretation of coefficients of key polynomials is due to A.\,Lascoux and M.-P.\,Sch\"utzenberger~\cite{LascouxSchutzenberger90}. Other combinatorial descriptions were obtained by S.\,Mason~\cite{Mason09}; they make use of the fact that key polynomials can be obtained as specializations of non-symmetric Macdonald polynomials, see~\cite{HaglundHaimanLoehr08}.

These days, Lascoux polynomials are becoming a popular object of research in algebraic combinatorics. Among some recent works, let us mention the proof of V.\,Reiner and A.\,Yong's conjecture~\cite{ReinerYong21} on an expansion of Grothendieck polynomials into Lascoux polynomials given by  M.\,Shimozono and T.\,Yu in~\cite{ShimozonoYu23}, and the work by J.\,Pan and T.\,Yu~\cite{PanYu23} on the top-degree components of Lascoux polynomials.

The main goal of this paper is to relate Lascoux polynomials to Gelfand--Zetlin polytopes. These polytopes play a remarkable role in representation theory. They were defined in 1950 in a note~\cite{GelfandZetlin50} by I.\,M.\,Gelfand and M.\,L.\,Zetlin (also spelled Tsetlin or Cetlin), for constructing certain ``nice'' bases in finite-dimensional representations of $\GL(n)$. Namely, vectors of such a basis in a $\GL(n)$-module $V_\lambda$ with the highest weight $\lambda$ are indexed by the integer points in a certain integer convex polytope  $GZ(\lambda)\subset \RR^{n(n-1)/2}$, called the Gelfand--Zetlin polytope.

Gelfand--Zetlin polytopes are also closely related to flag varieties (of type $A$). They are the moment polytopes for the toric degenerations of flag varieties due to N.\,Gonciulea and V.\,Lakshmibai, see~\cite{GonciuleaLakshmibai96}. These degenerations are singular, so the corresponding polytopes are not integrally simple (in fact, even not simple). One can consider the degenerations of Schubert varieties in a flag variety; this was studied by M.\,Kogan and E.\,Miller in~\cite{KoganMiller05}. In this paper the authors explicitly point out a set of faces of Gelfand--Zetlin polytope corresponding to the degeneration of each Schubert variety $X_w$; this degenerated variety can be reducible, so each face corresponds to an irreducible component. Each face corresponds to an rc-graph (pipe dream) for permutation $w\in S_n$. This result is closely related to the monomial degeneration of affine Schubert varieties by A.\,Knutson and E.\,Miller~\cite{KnutsonMiller05}.

In~\cite{KST}, V.\,Kiritchenko, E.\,Smirnov, and V.\,Timorin   generalized this construction to compute products in the cohomology ring of flag variety. They also provided a relation between Schubert varieties, key polynomials and Gelfand--Zetlin polytopes. It is well-known that the Gelfand--Zetlin polytope $GZ(\lambda)$ for a  dominant weight for $\GL(n)$  admits a projection $\pi\colon GZ(\lambda)\to \wt(\lambda)$ to the weight polytope of the $\GL(n)$-module $V_\lambda$ with highest weight $\lambda$. It turns out that if we take Kogan and Miller's collection  of faces $F_{w,\lambda}$ of $GZ(\lambda)$ corresponding to the degeneration of $X_w$, then the key polynomial can be obtained as $\kappa_{w,\lambda}=\sum \exp(\pi(z))$, where $z$ ranges over the set of integer points in $F_{w,\lambda}$ (see \cite[Corollary~5.2]{KST}).

In this paper we generalize this result to the case of Grassmannian Grothendieck and Lascoux polynomials, constructing their combinatorial description in terms of subdivisions of Gelfand--Zetlin polytopes. For this we construct a cellular decomposition $\cC$ of $GZ(\lambda)$ whose 0-cells coincide with  integer points in $GZ(\lambda)$. Now, to each $i$-dimensional cell $C_i$ we assign a monomial $m(C_i)$ in $x_1,\dots,x_n$; for a 0-cell $z\in GZ(\lambda)$ we have $m(C_i)=\exp(\pi(z))$. Some cells correspond to the zero monomial. Our main result is as follows:
\[
\cL_{w,\lambda}=\sum_{C_i\in \cC\cap F_{w,\lambda}} \beta^i m(C_i),
\]
where the sum is taken over all cells situated inside the collection of faces $F_{w,\lambda}$.

Informally, the Lascoux polynomial $\cL_{w,\lambda}$ can be viewed as a ``weighted Euler characteristic'' of the subdivision $\cC\cap F_{w,\lambda}$ for the collection of faces $F_{w,\lambda}$. Namely, $i$-dimensional cells of this subdivision correspond to monomials of degree $i+\ell(w)$ with coefficient $\beta^i$ in front of them.

It would be very interesting to establish a bijection of our construction of cells indexing monomials in Lascoux polynomials with T.\,Yu's description in terms of set-valued tableaux. In particular, we expect the crystal operations on set-valued tableaux (see~\cite{Yu21}) to have a nice description in terms of Gelfand--Zetlin polynomials. However, we do not address these questions in this paper, leaving them as a subject of subsequent work. 

\subsection*{Structure of the paper} This paper is organized as follows. We recall the definitions of Lascoux polynomials and Gelfand--Zetlin polytopes in Section~\ref{sec:prelim}. In Section~\ref{sec:example} we consider the first interesting example, describing the construction for $n = 3$. The main result is stated in Section~\ref{sec:main}. Its proof is given in Section~\ref{sec:proofs-w0} for the case of the longest permutation $w=w_0$ and in Section~\ref{sec:proofgen} for an arbitrary permutation $w$, respectively. 

\subsection*{Acknowledgements} We are grateful to Valentina Kiritchenko for useful discussions. We also would like to thank the anonymous referee for numerous valuable comments and suggestions that significantly improved the exposition. 

\section{Preliminaries}\label{sec:prelim}

 In the first part of this section we begin with definitions of divided difference operators and Demazure--Lascoux operators and their properties. Next, we define the Lascoux polynomials and state how they are related to key polynomials, Grassmannian Grothendieck polynomials, and Schur polynomials. In the second part of this section we will describe the Gelfand--Zetlin polytopes and their faces.

\subsection{Lascoux polynomials}\label{ssec:lascoux}

To define Lascoux polynomials, we need two families of operators: \emph{divided difference operators} $\partial_i$, with $1\leq i\leq n-1$, acting on the polynomial ring $\ZZ[x_1,\dots,x_n]$, and \emph{Demazure--Lascoux operators} $\pi_i^{(\beta)}$, again with $1\leq i\leq n-1$, acting on the ring $\ZZ[\beta,x_1,\dots,x_n]$ equipped with a formal parameter $\beta$. 

The parameter $\beta$ appears in the connective $K$-theory of a Grassmannian; taking $\beta=-1$ and $\beta=0$, we recover the usual $K$-theory and the cohomology ring of a Grassmannian respectively.

\begin{definition}\label{def:divdiff}
The $i^{th}$ \emph{divided difference operator} $\partial_i$ acts on polynomial $f = f(x_1, x_2, \ldots)$ in the following way:
 $$
 \partial_i (f) = \frac{f - s_if}{x_i - x_{i+1}},
 $$
where $s_if$ is obtained from $f$ by permuting variables $x_i$ and $x_{i+1}$. 
\end{definition}

\begin{definition}\label{def:demlascoux}
    \textit{The $i^{th}$ Demazure--Lascoux operator} $\pi_i^{(\beta)}$ acts on polynomial $f \in \mathbb{Z}[\beta][x_1, x_2, \ldots]$ in the following way:
$$
 \pi_i^{(\beta)}(f) = \partial_i(x_i f + \beta x_ix_{i+1}f ).
 $$
\end{definition}

The following properties of Demazure--Lascoux operators are immediate.

\begin{prop}\label{prop:lascoux}
 Demazure--Lascoux operators $\pi_i^{(\beta)}$ are idempotent linear operators satisfying the braid relations. Namely:
 \begin{itemize}
     \item If $f = s_if$, then $\pi_i^{(\beta)}(f) = f$;
     \item $(\pi_i^{(\beta)})^2 = \pi_i^{(\beta)}$;
     \item $\pi_i^{(\beta)} \pi_j^{(\beta)} = \pi_j^{(\beta)}\pi_i^{(\beta)}$ if $|i - j| > 1$;
     \item  $\pi_i^{(\beta)}\pi_{i+1}^{(\beta)}\pi_i^{(\beta)} = \pi_{i+1}^{(\beta)}\pi_i^{(\beta)}\pi_{i+1}^{(\beta)}$.
 \end{itemize}
\end{prop}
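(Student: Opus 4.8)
The whole proposition rests on a handful of elementary properties of the divided difference operators, which I would record first. Starting from the twisted Leibniz rule $\partial_i(fg)=(\partial_i f)\,g+(s_i f)\,\partial_i g$, which is immediate from Definition~\ref{def:divdiff}, one reads off at once that $\partial_i$ is $\ZZ[\beta]$-linear, that $\partial_i f=0$ if and only if $s_i f=f$, and that $\partial_i(pg)=p\,\partial_i g$ whenever $p$ is symmetric in $x_i,x_{i+1}$. Two further facts will be central: first, $s_i\partial_i=\partial_i$, so the image of $\partial_i$ consists of polynomials symmetric in $x_i,x_{i+1}$; and second, the direct computations $\partial_i x_i=1$ and $\partial_i(x_ix_{i+1})=0$, the latter because $x_ix_{i+1}$ is $s_i$-symmetric.

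The first two bullets are then short. If $s_if=f$, the Leibniz rule gives $\pi_i^{(\beta)}(f)=\partial_i\big((x_i+\beta x_ix_{i+1})f\big)=\partial_i(x_i+\beta x_ix_{i+1})\cdot f=(1+0)f=f$, which is the first bullet. For idempotence I would observe that $\pi_i^{(\beta)}(g)$ lies in the image of $\partial_i$ and is therefore symmetric in $x_i,x_{i+1}$; applying the first bullet to it shows $\pi_i^{(\beta)}\big(\pi_i^{(\beta)}(g)\big)=\pi_i^{(\beta)}(g)$, i.e. $(\pi_i^{(\beta)})^2=\pi_i^{(\beta)}$.

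For the commutation relation I would factor $\pi_i^{(\beta)}=\partial_i\circ M_i$, where $M_i$ denotes multiplication by $u_i:=x_i+\beta x_ix_{i+1}$. When $|i-j|>1$ the index pairs $\{i,i+1\}$ and $\{j,j+1\}$ are disjoint, so $u_i$ is symmetric in $x_j,x_{j+1}$ and vice versa; hence $\partial_i M_j=M_j\partial_i$ and $\partial_j M_i=M_i\partial_j$ by the symmetric-multiplier property, while $M_iM_j=M_jM_i$ trivially and $\partial_i\partial_j=\partial_j\partial_i$ is the standard commutation of divided differences. Shuffling these factors gives $\partial_iM_i\partial_jM_j=\partial_i\partial_jM_iM_j=\partial_j\partial_iM_jM_i=\partial_jM_j\partial_iM_i$, which is exactly $\pi_i^{(\beta)}\pi_j^{(\beta)}=\pi_j^{(\beta)}\pi_i^{(\beta)}$.

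The braid relation is the only genuinely computational point, and I expect it to be the main obstacle. The plan is to reduce it to a finite check. First I would reduce to $n=3$ and $i=1$: both triple products involve only $\partial_1,\partial_2$ and multiplication by $u_1,u_2$, none of which touches $x_4,\dots,x_n$, so these variables may be treated as coefficients and the identity becomes one of operators on $\ZZ[\beta][x_1,x_2,x_3]$ (general $i$ follows by relabeling). Next, since $\pi_1^{(\beta)}$ and $\pi_2^{(\beta)}$ both act $\Lambda$-linearly for $\Lambda=\ZZ[\beta][x_1,x_2,x_3]^{S_3}$, again by the symmetric-multiplier property, and since $\ZZ[\beta][x_1,x_2,x_3]$ is free over $\Lambda$ with the Artin basis $\{x_1^ax_2^b:0\le a\le 2,\ 0\le b\le 1\}$, it suffices to verify $\pi_1^{(\beta)}\pi_2^{(\beta)}\pi_1^{(\beta)}=\pi_2^{(\beta)}\pi_1^{(\beta)}\pi_2^{(\beta)}$ on these six monomials. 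On $1$ both sides act as the identity, so only five finite polynomial evaluations remain; these are routine but unavoidable, and carrying them out completes the proof.
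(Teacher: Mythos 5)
Your proposal is correct. The paper itself disposes of this proposition with the single line ``Straightforward computation,'' so there is no detailed argument to compare against; what you have written is a legitimate (and rather more structured) way of organizing exactly that computation. The preliminary facts you isolate — the twisted Leibniz rule $\partial_i(fg)=(\partial_i f)g+(s_if)\partial_i g$, the equivalence $\partial_i f=0\Leftrightarrow s_if=f$, the identity $s_i\partial_i=\partial_i$, and $\partial_i(x_i+\beta x_ix_{i+1})=1$ — are all correct and immediately give the first two bullets; the factorization $\pi_i^{(\beta)}=\partial_i M_i$ with a symmetric multiplier cleanly handles the commutation relation. For the braid relation, the reduction to $n=3$, $i=1$ and then to the Artin basis $\{x_1^ax_2^b: 0\le a\le 2,\ 0\le b\le 1\}$ over $\Lambda=\ZZ[\beta][x_1,x_2,x_3]^{S_3}$ is sound, since both composites are $\Lambda$-linear and the polynomial ring is free over $\Lambda$ on that basis; this is a genuine economy over checking the identity on arbitrary monomials. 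The only thing left unexecuted is the five remaining basis evaluations, which is precisely the residue of ``straightforward computation'' and should be carried out (or at least one representative case displayed) if this were to be written in full, but it is not a gap in the logic.
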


\begin{proof}
Straightforward computation. 
\end{proof}

 Let $\alpha = (\alpha_1, \alpha_2, \ldots)$ be an infinite sequence of nonnegative integers with finitely many positive entries.
 \begin{definition}\label{def:lascoux} \textit{The Lascoux polynomial $\mathscr{L}_{\alpha} \in \mathbb{Z}[\beta][x_1, x_2, \ldots]$ associated with $\alpha$} is defined by:
 \begin{equation*}
\mathscr{L}_{\alpha} = 
 \begin{cases}
   x^{\alpha} &\text{if $\alpha$ is a partition: $\alpha_1 \geq \alpha_{2} \geq \ldots$}\\
   \pi_i^{(\beta)} (\mathscr{L}_{s_i \alpha})&\text{otherwise, where $\alpha_i < \alpha_{i + 1}$}
 \end{cases}
\end{equation*}
\end{definition}
 Since the Demazure--Lascoux operators satisfy the braid relations, we can associate a Lascoux polynomial to partition $\lambda$ and permutation $w \in S_n$ in the following way:
 $$
 \mathscr{L}_{w, \lambda} = \pi_{i_k}^{(\beta)}\ldots\pi_{i_2}^{(\beta)}\pi_{i_1}^{(\beta)}(x^{\lambda}),
 $$
where $(s_{i_k},\ldots, s_{i_1})$ is a reduced word for permutation $w=s_{i_1}\ldots s_{i_k}$.

It is well-known (cf., for instance, \cite{Yu21}) that specializations of Lascoux polynomials provide other nice families of polynomials.

\begin{theorem}\label{thm:4families}
\begin{enumerate}
    \item Key polynomials are obtained by specializing the Lascoux polynomials at~$\beta = 0$:
$$
\kappa_{w, \lambda} = \mathscr{L}_{w, \lambda}\mid_{\beta = 0};
$$

\item Grassmannian Grothendieck polynomials are equal to Lascoux polynomials with permutation~$w_0$:
$$
G_{\lambda}^{(\beta)} = \mathscr{L}_{w_0, \lambda} = \pi_{w_0}^{(\beta)}(x^\lambda);
$$

\item Schur polynomials are equal to key polynomials for permutation~$w_0$, or, equivalently, to Grassmannian Grothendieck polynomials for $\beta=0$:
$$
S_{\lambda}=\kappa_{w_0, \lambda} = \pi_{w_0}^{(\beta)}(x^\lambda)|_{\beta = 0}.
$$
\end{enumerate}
\end{theorem}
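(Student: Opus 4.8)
The plan is to prove the three identities largely by tracing through definitions, since each amounts to recognizing a standard operator description of the family in question; the only genuinely non-formal point will be an appeal to the literature in part (2).

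For part (1), I would start from the observation that evaluating $\beta=0$ collapses the Demazure--Lascoux operator to the classical Demazure (isobaric divided difference) operator. Indeed, by Definition~\ref{def:demlascoux} one has $\pi_i^{(\beta)}(f)=\partial_i(x_i f)+\beta\,\partial_i(x_i x_{i+1} f)$, so setting $\beta=0$ yields $\pi_i(f):=\partial_i(x_i f)$, the operator whose iterated application to $x^\lambda$ computes the key polynomial $\kappa_{w,\lambda}$ by the Demazure character formula~\cite{Demazure74,Andersen85}. I would then check that the specialization homomorphism $\ZZ[\beta][x_1,\dots,x_n]\to\ZZ[x_1,\dots,x_n]$ at $\beta=0$ commutes with the operators, i.e. that $\bigl(\pi_i^{(\beta)}(g)\bigr)\mid_{\beta=0}=\pi_i\bigl(g\mid_{\beta=0}\bigr)$ for every $g$; this is immediate, since $\partial_i$ and multiplication by $x_i,x_{i+1}$ do not involve $\beta$, while the $\beta$-term is killed by the specialization. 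An induction on the length of a reduced word for $w$, using Proposition~\ref{prop:lascoux} to ensure that $\mathscr{L}_{w,\lambda}=\pi_{i_k}^{(\beta)}\cdots\pi_{i_1}^{(\beta)}(x^\lambda)$ is well defined, then pushes the specialization through all operators and gives $\mathscr{L}_{w,\lambda}\mid_{\beta=0}=\pi_{i_k}\cdots\pi_{i_1}(x^\lambda)=\kappa_{w,\lambda}$.

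For part (2), I would first record that $\pi_{w_0}^{(\beta)}(x^\lambda)$ is a symmetric polynomial. Since $w_0$ is the longest element, $\ell(w_0 s_i)<\ell(w_0)$ for every $i$, so $w_0$ admits a reduced word ending in $s_i$; in the corresponding operator product $\pi_i^{(\beta)}$ appears as the outermost factor, whence $\pi_{w_0}^{(\beta)}(x^\lambda)$ lies in the image of $\pi_i^{(\beta)}$. As $\partial_i(g)$ is $s_i$-invariant for every $g$, every element of this image is symmetric in $x_i,x_{i+1}$; since this holds for all $i$, the polynomial $\pi_{w_0}^{(\beta)}(x^\lambda)$ is fully symmetric. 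It then remains to identify this symmetric polynomial with the Grassmannian Grothendieck polynomial $G_\lambda^{(\beta)}$, which is precisely the operator description of these polynomials going back to Fomin--Kirillov~\cite{FominKirillov96,FominKirillov94} and compatible with Buch's set-valued tableau formula~\cite{Buch02}; I would cite this identification (cf.~\cite{Yu21}) rather than reprove it. Part (3) then follows by combining parts (1) and (2) with the classical fact that the full Demazure operator computes the Weyl character, $\pi_{w_0}(x^\lambda)=S_\lambda$: specializing part (2) at $\beta=0$ and invoking part (1) gives $G_\lambda^{(\beta)}\mid_{\beta=0}=\mathscr{L}_{w_0,\lambda}\mid_{\beta=0}=\kappa_{w_0,\lambda}=\pi_{w_0}(x^\lambda)=S_\lambda$.

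I expect the genuine content, and the main obstacle, to lie in part (2): all three displayed equalities are ``well known,'' but the only step that is not bookkeeping is matching the purely algebraic expression $\pi_{w_0}^{(\beta)}(x^\lambda)$ with whatever definition of $G_\lambda^{(\beta)}$ one takes as primary (geometric via connective $K$-theory, or combinatorial via set-valued tableaux). The symmetry argument above certifies that the expression lands in the correct ring of symmetric polynomials, but the precise coefficient-by-coefficient identification is exactly where one must defer to the existing literature.
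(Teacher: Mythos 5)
Your proposal is correct, and it matches the paper's treatment: the paper states Theorem~\ref{thm:4families} as ``well-known'' with a pointer to the literature (cf.~\cite{Yu21}) and gives no proof, so there is nothing to diverge from. Your elaboration of the routine parts --- pushing the $\beta=0$ specialization through the operators for (1), the $s_i$-symmetry of the image of $\pi_i^{(\beta)}$ for (2), and the Demazure character formula for (3) --- is sound, and you correctly identify that the only non-formal step, matching $\pi_{w_0}^{(\beta)}(x^\lambda)$ against the chosen definition of $G_\lambda^{(\beta)}$, is exactly the point both you and the authors defer to the existing literature.
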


\subsection{Gelfand--Zetlin polytopes and Gelfand--Zetlin patterns}\label{ssec:gz}

Let $\lambda$ be a partition, i.e. a sequence of nonnegative integers $ \lambda_1 \geq \lambda_2 \geq \ldots \geq \lambda_n$. Consider the space $\RR^d$, where $d = \frac{n (n-1)}{2}$, with coordinates $y_{ij}$ indexed by pairs $(i, j)$ of positive integers satisfying $i + j \leq n$. Consider the system of inequalities defined by the following tableau:
\begin{equation}\label{eq:gz-tableau}
	\begin{matrix}
		\lambda_n && \lambda_{n-1}&& \lambda_{n-2}&&\dots &&\lambda_1\\
		& y_{11} && y_{12} && \dots && y_{1,n-1}\\
		&& y_{21}&& \dots && y_{2,n-2}\\
		&&& \ddots & \vdots &\reflectbox{$\ddots$}\\
		&&&&y_{n-1,1}
	\end{matrix},
\end{equation}
where every triple of variables $a, b, c$ in each small triangle $\begin{matrix} a && b \\ & c \end{matrix}$ satisfies the inequalities $a \leq c \leq b$.

\begin{definition} A \emph{Gelfand--Zetlin polytope} $GZ(\lambda)$ is the set of points in $\RR^{n(n-1)/2}$ satisfying the set of inequalities defined by~(\ref{eq:gz-tableau}). A \emph{Gelfand--Zetlin pattern} is a tableau of \emph{integer} coordinates $y_{ij}$ satisfying the same inequalities.  In other words, a Gelfand--Zetlin pattern is the set of coordinates of an integer point in $GZ(\lambda)$. 
\end{definition}

\begin{remark}\label{rem:shifts} Obviously, adding the same integer $k$ to all $\lambda_i$ (that is, replacing the first row of the tableau by $(\lambda_n+k,\lambda_{n-1}+k,\dots,\lambda_1+k)$) results in shifting every coordinate of $GZ(\lambda)$ by~$k$. So, up to a parallel translation $GZ(\lambda)$ is defined not by $\lambda_i$, but rather by their differences $\lambda_i-\lambda_{i-1}$. Further in the examples we will sometimes assume $\lambda_n=0$. Also note that if all $\lambda_i-\lambda_{i-1}$ are nonzero, $GZ(\lambda)$ is full-dimensional; moreover, all such polytopes have the same normal fan.
\end{remark}

The following theorem is classical.

\begin{theorem}[\cite{GelfandZetlin50}]\label{thm:gz50} The number of Gelfand--Zetlin patterns is equal to the dimension of $\GL(n)$-module $V_{\lambda}$ with the highest weight $\lambda$. It can be computed using Weyl's dimension formula:
\[
\#(GZ(\lambda)\cap \ZZ^d)=\dim V_{\lambda}=\prod_{1\leq i<j\leq n} \frac{\lambda_i-\lambda_j-i+j}{j-i}.
\]
\end{theorem}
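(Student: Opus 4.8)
The plan is to prove the two equalities in turn: the first by induction on $n$ via the branching rule, and the second by invoking (or re-deriving) Weyl's dimension formula. Write $N(\lambda)$ for the number of Gelfand--Zetlin patterns with top row $\lambda$, so that $N(\lambda)=\#(GZ(\lambda)\cap\ZZ^d)$. The structural observation is that in tableau~(\ref{eq:gz-tableau}) the second row $\mu=(y_{11},\dots,y_{1,n-1})$ interlaces the top row $\lambda$, while the rows strictly below the second form exactly a Gelfand--Zetlin tableau of size $n-1$ with top row $\mu$. Since the interlacing inequalities force $\mu$ to be weakly decreasing with each coordinate $\mu_j$ ranging freely over an integer interval bounded by consecutive $\lambda$'s, peeling off the top row yields the recursion
\[
N(\lambda)=\sum_{\mu\prec\lambda}N(\mu),
\]
where $\mu\prec\lambda$ denotes the interlacing condition.

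For the first equality I would quote the \emph{branching rule} for $\GL(n)\downarrow\GL(n-1)$: the restricted module decomposes as $V_\lambda|_{\GL(n-1)}=\bigoplus_{\mu\prec\lambda}V_\mu$, with $\mu$ ranging over exactly the same interlacing vectors. Taking dimensions gives $\dim V_\lambda=\sum_{\mu\prec\lambda}\dim V_\mu$, which is the identical recursion. The base case $n=1$ is trivial (a single empty pattern, and $\dim V_{(\lambda_1)}=1$), so induction on $n$ immediately yields $N(\lambda)=\dim V_\lambda$. This is the original argument of Gelfand and Zetlin, and it is the conceptual heart of the statement.

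It remains to identify $\dim V_\lambda$ with the product on the right, which is precisely Weyl's dimension formula; the quickest route is to cite it, as it follows from the Weyl character formula by specializing the character $\ch V_\lambda$ to the identity element. For a self-contained combinatorial proof one can instead verify directly that the product $W(\lambda)=\prod_{1\leq i<j\leq n}\frac{\lambda_i-\lambda_j-i+j}{j-i}$ satisfies the same recursion $W(\lambda)=\sum_{\mu\prec\lambda}W(\mu)$ and the same base case; being determined by the recursion together with its initial value, $N$ and $W$ then coincide. Equivalently, one may use the classical bijection between Gelfand--Zetlin patterns with top row $\lambda$ and semistandard Young tableaux of shape $\lambda$ with entries in $\{1,\dots,n\}$, so that $N(\lambda)=S_\lambda(1,\dots,1)$ ($n$ ones), and then read off the product as the principal specialization of the Schur polynomial.

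The main obstacle, should one insist on a self-contained treatment of the second equality, is the summation identity $\sum_{\mu\prec\lambda}W(\mu)=W(\lambda)$. Because the interlacing region is a product of integer intervals, one can sum over the coordinates $\mu_{n-1},\mu_{n-2},\dots$ one at a time, each step being a one-dimensional summation of a polynomial over an interval that is evaluated by telescoping. The coupling of all the $\mu_j$ inside $W(\mu)$ makes the bookkeeping delicate, but there is no genuine conceptual difficulty here. If one is content to cite Weyl's dimension formula for the second equality, the entire proof reduces to the branching rule plus a one-line induction.
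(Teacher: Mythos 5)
The paper does not actually prove this statement: it is cited as a classical result of \cite{GelfandZetlin50}, and the only ``proof'' present is the informal paragraph immediately following the theorem, which describes iterated restriction of $V_\lambda$ along $\GL(n)\supset\GL(n-1)\supset\dots\supset\GL(1)$ with the multiplicity-free interlacing decomposition. Your argument is exactly that standard one --- the branching rule gives the same recursion $\sum_{\mu\prec\lambda}$ that peeling the top row off a pattern gives, and Weyl's dimension formula supplies the product --- so it is correct and coincides in approach with what the paper sketches.
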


Gelfand--Zetlin patterns parametrize elements of a certain basis in $V_{\lambda}$, constructed as follows. Consider the upper-left corner subgroup $\GL(n-1)\subset \GL(n)$ and restrict $V_{\lambda}$ to this subgroup. As a $\GL(n-1)$-module, it will be reducible, but multiplicity free, meaning that every irreducible component occurs at most once: $V_{\lambda}=\bigoplus_\mu V_\mu$. The weights $\mu=(\mu_1,\dots,\mu_{n-1})$ are highest weights of  $\GL(n-1)$-modules; they satisfy the intermittence condition: $\lambda_1\geq \mu_1\geq\lambda_2\geq\dots \geq \mu_{n-1}\geq \lambda_n$. Write the components $\mu_i$ from right to left as the second row of a Gelfand--Zetlin patterns. Now restrict each of $\GL(n-1)$-modules $V_{\mu_i}$ to the subgroup $\GL(n-2)$, etc.; at the end we obtain a set of one-dimensional subspaces of $V_{\lambda}$ (representations of $\GL(1)$). Picking a nonzero vector in each of these subspaces, we obtain a \emph{Gelfand--Zetlin basis} of $V_{\lambda}$ indexed by the sets of intermitting highest weights, i.e., by Gelfand--Zetlin patterns.

\subsection{Faces of Gelfand--Zetlin polytopes}

Faces of a Gelfand--Zetlin polytope are obtained by replacing some of the defining inequalities by equalities. Following~\cite{KST}, we will represent them by \emph{face diagrams}. An example of face diagram is given in Figure~\ref{fig:fig2}. Dots in this diagram correspond to coordinates $y_{ij}$, while an edge between $y_{ij}$ and $y_{i-1 j}$ means that in the system of inequalities, $y_{i-1j} \leq y_{ij}$ is replaced by $y_{i-1j} = y_{ij}$. The same happens with edges between $y_{ij}$ and $y_{i-1j + 1}$. Here we formally set $y_{0i}=\lambda_{n+1-i}$, so some variables may be equal to entries $\lambda_i$ from the top row.

\begin{example}\label{ex:21}
Let $\lambda = (\lambda_1,\lambda_2,0)$. The polytope $GZ(\lambda)$ is shown in Figure~\ref{fig:fig1}, the diagram corresponding to the shaded face is shown in Figure~\ref{fig:fig2}.

\begin{figure}[h!]
\centering
\begin{minipage}[b]{0.4\textwidth}
\centering
\begin{tikzpicture}
\filldraw[black!10] (2,2) -- (3.5, 4.5) -- (3.5,2.5);
\draw [gray] (1.5, 4.5)  -- (1.5,0.5);
\draw (0,0) -- (0,2) -- (2,2) -- (0,0) -- (1.5,0.5) -- (3.5, 2.5) -- (3.5, 4.5) -- (2,2) -- (3.5,2.5) -- (3.5, 4.5)-- (1.5, 4.5) -- (0,2);
\draw (0,1) node[left]{$\lambda_1-\lambda_2$};
\draw (2.5,4.5) node[above]{$\lambda_1-\lambda_2$};
\draw (0.85, 0.2) node[below]{$\lambda_2-\lambda_3$};
\draw (3.5,3.5) node[right]{$\lambda_2-\lambda_3$};
\end{tikzpicture}
\caption{Gelfand--Zetlin polytope}\label{fig:fig1}
\end{minipage}
\hfill
\begin{minipage}[b]{0.5\textwidth}
\centering
\begin{tikzpicture}
\draw (2,3.5) -- (1,2.5);
\filldraw (0,3.5) circle (2pt);
\filldraw (2,3.5) circle (2pt);
\filldraw (4,3.5) circle (2pt);
\filldraw (1,2.5) circle (2pt);
\filldraw (3,2.5) circle (2pt);
\filldraw (2,1.5) circle (2pt);
\end{tikzpicture}
\caption{Face diagram of the shaded face}\label{fig:fig2}
\end{minipage}
\end{figure}

\end{example}

Note that these equalities are in general not independent: for each four vertices forming a ``diamond'' in three consecutive rows, the top two equalities imply the two bottom ones, and vice versa. Indeed, for a ``diamond'' $\begin{smallmatrix} & y_{i-1,j}\\ y_{i,j-1} && y_{ij}\\ &y_{i+1, j-1}\end{smallmatrix}$, the equalities $y_{i,j-1}=y_{i-1,j}=y_{i,j}$ imply that $y_{i+1,j-1}$ is also equal to these three values, since both $y_{i+1,j-1}$ and $y_{i-1,j}$ are ``squeezed'' between $y_{i,j-1}$ and $y_{ij}$; the same holds for equalities in the bottom row.

This means that Gelfand--Zetlin polytopes are not simple (in fact, they are ``highly non-simple''): the intersection of certain facets can have codimension strictly less than the number of facets.

\subsection{Dual Kogan faces}\label{ssec:dualkogan} These are faces of Gelfand--Zetlin polytopes of some special form.

\begin{definition}\label{def:dualkogan}
	A face of Gelfand--Zetlin polytope is called a \emph{dual Kogan face} if it is defined only by equations of the form $y_{ij}=y_{i+1,j-1}$ for $i\geq 0$.
\end{definition}

Equivalently, dual Kogan faces are exactly those containing the ``maximal'' vertex defined by the equations $\lambda_1=y_{1,n-1}=y_{2,n-2}=\dots=y_{n-1,1}$, $\lambda_2=y_{1,n-2}=\dots=y_{n-2,1}$, and so on. Note that the ``maximal'' dual Kogan vertex is simple. We also formally consider the whole polytope $GZ(\lambda)$ as a dual Kogan face, defined by the empty set of equations. 

Now we will assign a permutation to each dual Kogan face. Consider the following reduced word for the longest permutation $w_0\in S_n$: 
\[
\mathbf{w}_0=(s_{n-1},\dots,s_{1}, s_{n-2},\dots,s_{1},\dots, s_1,s_2,s_1)
\]

To each dual Kogan face $F$ we assign a subword $\mathbf{w}^-(F)$ of $\mathbf w_0$ as follows. Consider the face diagram and write simple transposition $s_{n-j}$ on all edges corresponding to the equalities $y_{i,j+1}=y_{i+1,j}$ (see Figure~\ref{fig:dualkogan}).

\begin{figure}[h!]
\begin{tikzpicture} [scale = 1.5]
    \filldraw (0,6) circle (1.5 pt);
    \filldraw (1,6) circle (1.5 pt);
    \filldraw (2,6) circle (1.5 pt);
    \filldraw (3,6) circle (1.5 pt);
    \filldraw (0.5,5.5) circle (1.5 pt);
    \filldraw (1.5,5.5) circle (1.5 pt);
    \filldraw (2.5,5.5) circle (1.5 pt);
    \filldraw (1,5) circle (1.5 pt);
    \filldraw (2,5) circle (1.5 pt);
    \filldraw (1.5,4.5) circle (1.5 pt);
    \draw[lightgray] (1.5, 4.5) -- (3,6);
    \draw[lightgray] (1,5) -- (2,6);
    \draw[lightgray] (0.5,5.5) -- (1,6);
    \draw (0.75,5.75) node {$s_3$};
    \draw (1.75,5.75) node {$s_2$};
    \draw (2.75,5.75) node {$s_1$};
    \draw (1.25,5.25) node {$s_3$};
    \draw (2.25,5.25) node {$s_2$};
    \draw (1.75,4.75) node {$s_3$};
\end{tikzpicture}
\caption{Assigning word to a dual Kogan face}\label{fig:dualkogan}
\end{figure}
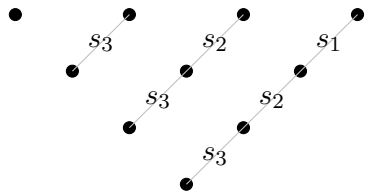

Now, for a face $F$, the corresponding word $\mathbf{w}^-(F)$ is obtained by reading the face diagram from bottom to top, from right to left, and taking the simple transpositions corresponding to the equations. Face $F$ is said to be \emph{reduced}, if the word $\mathbf{w}^-(F)$ is reduced, and \emph{non-reduced} in the opposite case. For a reduced face, we denote $w(F)=w_0w^-(F)$, where $w^-(F)$ is the permutation obtained by taking the product of simple transpositions in $\mathbf{w}^-(F)$. Figure\,\ref{fig:gz3-dualfaces} below shows all the face diagrams for reduced dual Kogan faces in a three-dimensional Gelfand--Zetlin polytope and the corresponding permutations $w(F)$. Note that the dimension of $F$ is equal to the length of $w(F)$.

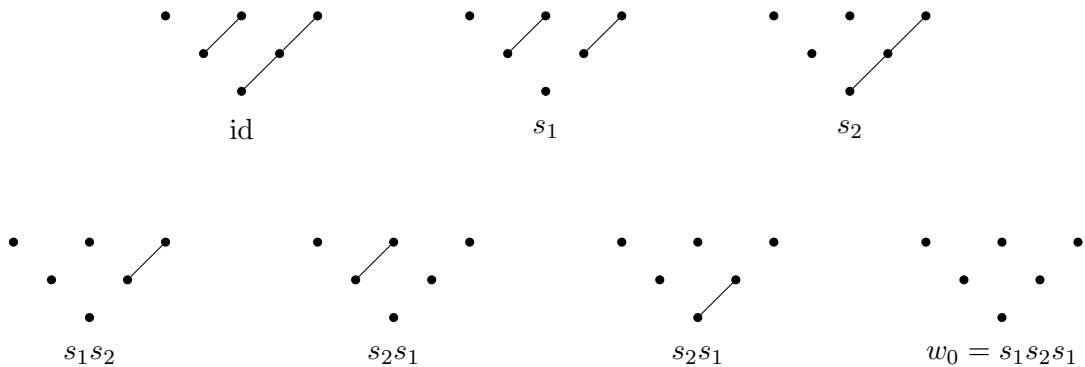
\begin{figure}[h!]
$$
\begin{tikzpicture} [scale = 1]
    \filldraw (0,6) circle (1.5 pt);
    \filldraw (1,6) circle (1.5 pt);
    \filldraw (2,6) circle (1.5 pt);
    \filldraw (0.5,5.5) circle (1.5 pt);
    \filldraw (1.5,5.5) circle (1.5 pt);
    \filldraw (1,5) circle (1.5 pt);
    \draw (1, 5) -- (2,6);
    \draw (0.5,5.5) -- (1,6);
    \draw (1, 4.5) node {$\mathrm{id}$};

    \filldraw (4,6) circle (1.5 pt);
    \filldraw (5,6) circle (1.5 pt);
    \filldraw (6,6) circle (1.5 pt);
    \filldraw (4.5,5.5) circle (1.5 pt);
    \filldraw (5.5,5.5) circle (1.5 pt);
    \filldraw (5,5) circle (1.5 pt);
    \draw (4.5,5.5) -- (5,6);
    \draw (5.5,5.5) -- (6,6); 
    \draw (5,4.5) node {$s_1$};

    \filldraw (8,6) circle (1.5 pt);
    \filldraw (9,6) circle (1.5 pt);
    \filldraw (10,6) circle (1.5 pt);
    \filldraw (8.5,5.5) circle (1.5 pt);
    \filldraw (9.5,5.5) circle (1.5 pt);
    \filldraw (9,5) circle (1.5 pt);
    \draw (9,5) -- (10,6);
    \draw (9,4.5) node {$s_2$};

    \filldraw (-2,3) circle (1.5 pt);
    \filldraw (-1,3) circle (1.5 pt);
    \filldraw (0,3) circle (1.5 pt);
    \filldraw (-0.5,2.5) circle (1.5 pt);
    \filldraw (-1.5,2.5) circle (1.5 pt);
    \filldraw (-1,2) circle (1.5 pt);
    \draw (-0.5,2.5) -- (0,3);
    \draw (-1,1.5) node {$s_1s_2$};

    \filldraw (2,3) circle (1.5 pt);
    \filldraw (3,3) circle (1.5 pt);
    \filldraw (4,3) circle (1.5 pt);
    \filldraw (2.5,2.5) circle (1.5 pt);
    \filldraw (3.5,2.5) circle (1.5 pt);
    \filldraw (3,2) circle (1.5 pt);
    \draw (2.5,2.5) -- (3,3);
    \draw (3,1.5) node {$s_2s_1$};

    \filldraw (6,3) circle (1.5 pt);
    \filldraw (7,3) circle (1.5 pt);
    \filldraw (8,3) circle (1.5 pt);
    \filldraw (6.5,2.5) circle (1.5 pt);
    \filldraw (7.5,2.5) circle (1.5 pt);
    \filldraw (7,2) circle (1.5 pt);
    \draw (7,2) -- (7.5,2.5);
    \draw (7,1.5) node {$s_2s_1$};

    \filldraw (10,3) circle (1.5 pt);
    \filldraw (11,3) circle (1.5 pt);
    \filldraw (12,3) circle (1.5 pt);
    \filldraw (10.5,2.5) circle (1.5 pt);
    \filldraw (11.5,2.5) circle (1.5 pt);
    \filldraw (11,2) circle (1.5 pt);
    \draw (11,1.5) node {$w_0 = s_1s_2s_1$};
\end{tikzpicture}
$$
\caption{Reduced dual Kogan faces and permutations for $n=3$.}\label{fig:gz3-dualfaces}
\end{figure}

\begin{remark}
In~\cite{KnutsonMiller05} and~\cite{KnutsonMiller04}, A.\,Knutson and E.\,Miller define \emph{subword complexes} and, more specifically, \emph{pipe dream complexes}. It readily follows from the definitions that as a CW-complex the link of the ``maximal'' dual Kogan vertex is nothing but the subword complex corresponding to the word $\mathbf{w}_0$ of the longest permutation $w_0$. Each dual Kogan face $F$ of $GZ(\lambda)$ corresponds to a pipe dream with permutation $w^-(F)$. To construct the pipe dream from a face diagram, for each equality $y_{ij}=y_{i+1,j-1}$, put a cross in the box $(n-j+1,i+1)$, and elbows in all the remaining boxes. 
\end{remark}

\subsection{Key polynomials} The following theorem is due to V.\,Kiritchenko, E.\,Smirnov, and V.\,Timorin~\cite{KST}. It provides a description of key polynomials in terms of integer points in dual Kogan faces. Here we state it in a different (but equivalent) way: the authors of the original paper used Kogan faces, as opposed to dual Kogan faces that we are using, and lowest-weight modules instead of highest-weight ones.

Before we proceed, define the following projection map $\pi\colon\RR^\frac{n(n-1)}2\to \RR^n$ from $GZ(\lambda)$ to the weight polytope $\wt(\lambda)$ of the $\GL(n)$-module $V_{\lambda}$:
\[
\pi\colon	\begin{pmatrix}
		 y_{11} && y_{12} && \dots && y_{1,n-1}\\
		& y_{21}&& \dots && y_{2,n-2}\\
		&& \ddots & \vdots &\reflectbox{$\ddots$}\\
		&&&y_{n-1,1}
	\end{pmatrix}\mapsto 
	\begin{pmatrix} y_{n-1,1}\\
	(y_{n-2,1}+y_{n-2,2})-y_{n-1,1}\\
	\vdots\\
	(y_{11}+\dots+y_{1,n-1})-(y_{21}+\dots+y_{2,n-2})
	\end{pmatrix}.
\]
This map takes a Gelfand--Zetlin pattern into the vector with the $i$-th component equal to the difference of sums of entries in $i$-th and $(i+1)$-th rows of the pattern, \emph{counted from below}.  

\begin{definition} For an integer point $z\in GZ(\lambda)\subset \RR^\frac{n(n-1)}{2}$, define its \emph{character} $\ch z\in\ZZ[x_1,\dots,x_n]$ as follows: $\ch z=x_1^{a_1}\dots x_n^{a_n}$, where $(a_1,\dots,a_n)=\pi(z)$. More generally, for an arbitrary subset~$S\subset\RR^\frac{n(n-1)}{2}$, define its \emph{character} $\ch S\in\ZZ[x_1,\dots,x_{n}]$ as the sum of all monomials~$\ch z$ for all integer points~$z\in S\cap \ZZ^{\frac{n(n-1)}2}$.
\end{definition}

\begin{theorem}[{\cite[Theorem 5.1]{KST}}]\label{thm:key-kst} The key polynomial $\kappa_{w,\lambda}$ is equal to the character of the union of the dual Kogan faces in $GZ(\lambda)$ corresponding to the permutation $w \in S_n$:
$$
 \kappa_{w,\lambda} = \mathrm{ch}\left( \bigcup \limits_{w(F) = w} {F} \right).
$$
\end{theorem}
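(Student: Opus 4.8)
The natural approach is induction on $\ell(w)$, using the recursive construction of key polynomials by Demazure operators. Write $\pi_i$ for the $\beta=0$ specialization of $\pi_i^{(\beta)}$, i.e. the classical Demazure operator $\pi_i(f)=\partial_i(x_if)$. By the braid relations one has $\kappa_{w,\lambda}=\pi_i(\kappa_{w',\lambda})$ whenever $w$ is obtained from $w'$ by appending a simple reflection that increases the length, so the assertion reduces to understanding how applying a single $\pi_i$ transforms the relevant union of faces. The base case $\ell(w)=0$ is immediate: the unique dual Kogan face attached to the identity is the maximal vertex $v_{\max}$, and evaluating the projection $\pi$ at $v_{\max}$ gives $\ch(v_{\max})=x^\lambda=\kappa_{\mathrm{id},\lambda}$.

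For the inductive step I would give a geometric interpretation of $\pi_i$ on $GZ(\lambda)$. Since the $k$-th weight coordinate equals $R_k-R_{k-1}$, where $R_k$ is the sum of the entries in the $k$-th row of the pattern counted from below, the operator $\pi_i$ — which affects only the $x_i,x_{i+1}$ variables — acts by varying $R_i$, the sum of the entries of the row $y_{n-i,\bullet}$, while freezing every other row sum. On a single monomial $x^a$ with $a_i\ge a_{i+1}$ one has $\pi_i(x^a)=\sum_{j=0}^{a_i-a_{i+1}} x^{\,a-j(e_i-e_{i+1})}$, a string of lattice points in direction $e_{i+1}-e_i$. I would therefore introduce a sweeping operator $D_i$ on unions of faces, defined by relaxing the equalities pinning the row $y_{n-i,\bullet}$ and letting $R_i$ decrease to the boundary of $GZ(\lambda)$, and prove the two statements that close the induction: (i) the character identity $\ch(D_i\,\Gamma)=\pi_i(\ch\,\Gamma)$ for the relevant unions $\Gamma$, and (ii) the combinatorial identity $D_i\bigl(\bigcup_{w(F')=w'}F'\bigr)=\bigcup_{w(F)=w}F$.

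For the bookkeeping in (ii) I would translate everything into the subword-complex language recalled in the text: since $w(F)=w_0\,w^-(F)$ and $w_0 s_i=s_{n-i}w_0$, passing from $w'$ to $w$ corresponds to deleting one letter from the subword $\mathbf{w}^-(F)$ of $\mathbf{w}_0$, i.e. to erasing one equality from the face diagram in the row governed by $\pi_i$. A phenomenon that must be confronted here is that the sweep of a single lower-dimensional face generally produces a \emph{union} of several higher faces: as $R_i$ decreases one relaxes one entry of the row until a diamond inequality becomes tight, after which a neighbouring entry must be relaxed to continue, so the trajectory crosses from one facet into another. This is exactly why the statement is phrased in terms of unions and why the assignment $F\mapsto w(F)$ is many-to-one.

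The main obstacle is the proof of the character identity (i) together with the precise description of $D_i$. Because $GZ(\lambda)$ is highly non-simple and the projection $\pi$ is far from injective on lattice points — several patterns can carry the same weight — one must show that the union of swept faces selects each weight with exactly the multiplicity dictated by the Demazure string sums, neither over- nor under-counting the lattice points lying on the diamond walls where the sweep changes faces. I would control this by exploiting the fibration of $GZ(\lambda)$ over the smaller Gelfand--Zetlin polytope obtained by forgetting the bottom rows, reducing the identity fibrewise to the one-dimensional string computation above, and by using the diamond relations to verify that the swept region is genuinely the union $\bigcup_{w(F)=w}F$ of \emph{reduced} dual Kogan faces. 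The length-preserving case of $\pi_i$ should correspond to the idempotency $D_i^2=D_i$, a face already maximally swept in direction $i$ being left unchanged.
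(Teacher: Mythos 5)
First, a point of orientation: the paper does not prove this statement at all --- Theorem~\ref{thm:key-kst} is quoted verbatim from \cite[Theorem~5.1]{KST}, so there is no in-paper argument to compare yours against. Your strategy --- induction on $\ell(w)$ with base case the maximal vertex, a geometric realization of the Demazure operator $\pi_i$ as a ``sweep'' of the row $y_{n-i,\bullet}$ obtained by relaxing the equalities pinning that row, and the matching of this sweep with the deletion of a letter from the subword $\mathbf{w}^-(F)$ --- is essentially the strategy of \cite{KST}, and it is echoed in Section~\ref{sec:proofgen} of the present paper for the harder $\beta$-deformed case. So the plan is the right one.

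However, as written the proposal is a roadmap rather than a proof: the two claims you label (i), $\ch(D_i\Gamma)=\pi_i(\ch\Gamma)$, and (ii), $D_i\bigl(\bigcup_{w(F')=w'}F'\bigr)=\bigcup_{w(F)=w}F$, carry all of the mathematical content, and you state them without proving them, explicitly flagging (i) as ``the main obstacle.'' Concretely, (i) is not a routine fibrewise string computation: $\ch\Gamma$ is a lattice-point sum over a non-convex union of faces of a highly non-simple polytope; the fibres of the sweep through a point of $\Gamma$ need not be single segments (they cross diamond walls, as you observe); and for lattice points of $\Gamma$ with $a_i<a_{i+1}$ the operator $\pi_i$ produces \emph{negative} strings that must cancel exactly against points that would otherwise be overcounted --- this cancellation is the heart of the theorem and is nowhere established in your argument (it is the exact analogue of what Lemma~\ref{lem:55} and Lemma~\ref{lem:6.4} do for the $\beta$-case). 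Likewise (ii) needs a combinatorial lemma identifying the swept region with the union of \emph{reduced} dual Kogan faces for $w$ and showing that non-reduced subwords contribute nothing; this is the ladder-move/mitosis analysis that the paper carries out in Lemmas~\ref{lem:61}--\ref{lem:63}. Until (i) and (ii) are actually proved, the argument does not close.
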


\begin{example} Let $S=GZ(\lambda)$ be the whole Gelfand--Zetlin polytope. Then, according to Theorem~\ref{thm:gz50}, its character $\ch S$ is nothing but the character of representation~$V_{\lambda}$, i.e., the Schur polynomial $S_\lambda(x_1,\dots,x_n)$.
\end{example}

\section{The three-dimensional case}\label{sec:example}

In this section we study the three-dimensional case. We start with describing a cellular decomposition for $GZ(\lambda)$, where $\lambda = (3, 2, 0)$, and show how monomials of Lascoux polynomials $\mathscr{L}_{w, \lambda}$ correspond to cells of this decomposition. Next, we will describe the construction in general for arbitrary $GZ(\lambda_1, \lambda_2, \lambda_3)$. The construction presented in this section is \emph{ad hoc}; we provide the cell decomposition in the general case in the subsequent section.

\subsection{Example}\label{ssec:ex320} Let $\lambda=(3, 2, 0)$. 
The Gelfand--Zetlin polytope $GZ(3, 2, 0)$ is given by the following tableau:
\[
\begin{matrix}
0 && 2 && 3 \\
&x&&y\\
&&z\\
\end{matrix}
\]

Our goal is to construct a cellular decomposition of $GZ(\lambda)$ and assign to each cell a monomial in $\beta,x_1,\dots,x_n$  with the following properties: first, each face of $GZ(\lambda)$ should be the union of some cells. Second, for the set of dual Kogan faces corresponding to a permutation $w\in S_n$, the sum of monomials corresponding to the cells forming these faces should be equal to the Lascoux polynomial $\cL_{w,\lambda}$.

We proceed by induction on the length of $w$. First put the monomial $x_1^3x_2^2$ into the vertex corresponding to the identity permutation. Then take the integer points on one-dimensional dual Kogan faces, thus getting their subdivision. Then we assign monomials occurring in $\pi_1^{(\beta)}(x_1^3x_2^2)$ and $\pi_2^{(\beta)}(x_1^3x_2^2)$ to vertices and segments of corresponding one-dimensional faces; see Figure~\ref{fig:ids1s2}.

\begin{figure}[h!]
\begin{tikzpicture} [scale = 1]

\filldraw [black] (0,0) circle (3pt);
\draw (0,0) node [above]{$ x_1^3 x_2^2$};
\draw (0, -2) node {\text{$\mathscr{L}_{\id, \lambda}$}};

\draw (4,-1) -- (4,1);
\filldraw [gray] (4,-1) circle (3pt);
\filldraw [black] (4,1) circle (3pt);
\draw (4,-1) node [right]{$ x_1^2 x_2^3$};
\draw (4,1) node [right]{$ x_1^3 x_2^2$};
\draw (4,0) node [right]{$ \beta x_1^3 x_2^3$};
\draw (4, -2) node {\text{$\mathscr{L}_{s_1, \lambda}$}};

\draw (8,0) node[above] {$ x_1^3 x_3^2$}-- node[below] {$ \beta x_1^3 x_2 x_3^2$}(10,0) node[above] {$ x_1^3 x_2 x_3$}-- node[below] {$\beta x_1^3 x_2^2x_3$}(12, 0)node[above] {$ x_1^3 x_2^2$};
\filldraw [gray] (8,0) circle (3pt);
\filldraw [gray] (10,0) circle (3pt);
\filldraw [black] (12,0) circle (3pt);
\draw (10, -2) node {\text{$\mathscr{L}_{s_2, \lambda}$}};

\end{tikzpicture}
\caption{Lascoux polynomials for $\id$, $s_1$, and $s_2$}\label{fig:ids1s2}
\end{figure}
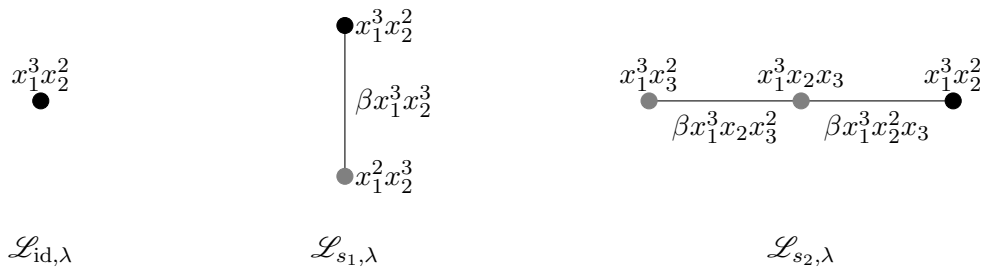

Acting again with $\pi_1^{(\beta)}$ and $\pi_2^{(\beta)}$ respectively, we obtain the subdivisions for two-dimensional faces shown on Figure~\ref{fig:s1s2s2s1}.

\begin{figure}[h!]
\begin{tikzpicture}[scale=1]

\filldraw [blue!5] (10.75, 5.25) -- (10.75, 8.25) -- (12.75, 8.25) -- (12.75, 7.25);
\filldraw [teal!10] (10.75, 8.25) -- (10,7) -- (10.75, 7);

\filldraw [teal!95!blue!15](10.75, 8.25) -- (10.75, 7) -- (12,7) -- (12.75, 7.25)  -- (12.75, 8.25);

\draw [NavyBlue!50!,  dashed] (11.75, 6.25) -- (11.75, 8.25) ;
\draw [NavyBlue!50!,  dashed] (10.75, 7.25) -- (11.75, 7.25);
\draw [PineGreen,dotted, very thick] (11.75, 8.25) -- (11, 7);

\draw (10,5) -- (10, 7) -- (12,7) -- (10,5) -- (10.75, 5.25) -- (12.75, 7.25) -- (12,7) -- (12.75, 7.25) -- (12.75, 8.25) -- (10.75, 8.25)  -- (10.75, 5.25) -- (10,5) -- (10, 7) -- (10.75, 8.25) -- (12.75, 8.25)-- (12,7);

\filldraw[black] (12.75, 8.25) circle (2pt);

\scriptsize

\filldraw [blue!5] (0,0) -- (0,6) -- (4,6) -- (4,4);

\draw[dashed, gray, ->] (0, 7) node [above][black] {$\beta^2 x_1^3 x_2^2 x_3^2$} -- (1, 5);
\draw[dashed, gray, ->] (4, 7) node [above][black] {$\beta^2 x_1^3 x_2^3 x_3$} -- (3, 5);
\draw[dashed, gray, ->] (3, 0) node [right][black] {$\beta^2 x_1^2 x_2^3 x_3^2$} -- (1, 2);

\draw (0,6) node [left = 2pt] {$x_1^3x_3^2$}--node [left  = 2pt] {$\beta x_1^3x_2x_3^2$}
    (0,4) node [left  = 2pt] {$x_1^2x_2x_3^2$}--node [left  = 2pt] {$\beta x_1^2x_2^2x_3^2$}
    (0,2) node [left  = 2pt] {$x_1x_2^2x_3^2$}--node [left = 2pt] {$\beta x_1x_2^3x_3^2$}
    (0,0) node [left  = 2pt] {$x_2^3x_3^2$};

\draw (2,6) node [above] {$x_1^3x_2x_3$}--node [above, sloped] {$\beta x_1^3x_2^2x_3$}
    (2,4) node [right = 2pt] {$x_1^2x_2^2x_3$}--node[above, sloped]{$\beta x_1^2x_2^3x_3$}
    (2,2) node [below = 2pt, rotate = 45] {$x_1x_2^3x_3$};

\draw (4,6) node [right = 2pt, NavyBlue] {$x_1^3x_2^2$}--node [right = 2pt] {$\beta x_1^3x_2^3$}
    (4,4) node [right = 2pt] {$x_1^2x_2^3$};

\draw (0, 0) -- node[below, sloped]{$\beta x_1x_2^3x_3^2$} (2, 2) -- node[below, sloped]{$\beta x_1^2x_2^3x_3$} (4, 4);

\draw (0, 4) -- node [below] {$\beta x_1^2x_2^2x_3^2$} (2, 4);

\draw (4, 6) -- node [below = 2pt] {$\beta x_1^3x_2^2x_3$} 
      (2, 6) -- node [below = 2pt] {$\beta x_1^3x_2x_3^2$}
      (0,6);

\filldraw [NavyBlue](4,6)  circle (3pt);
\filldraw [gray] (4,4) circle (3pt);
\filldraw [NavyBlue](2,6) circle (3pt);
\filldraw [gray] (2,4) circle (3pt);
\filldraw [gray] (2,2) circle (3pt);
\filldraw [NavyBlue] (0,6) circle (3pt);
\filldraw [gray] (0,2) circle (3pt);
\filldraw [gray] (0,4) circle (3pt);
\filldraw [gray] (0,0) circle (3pt);
\draw [NavyBlue][very thick] (0,6) -- (4,6);

\filldraw [teal!10] (7,0) -- (9,2) -- (13,2) -- (13, 0);
\draw[dashed, gray, ->] (9, 3.5) node [above][black] {$\beta^2 x_1^3 x_2^2x_3^2$} -- (11, 1);
\draw[dashed, gray, ->] (12, 3.5) node [above][black] {$\beta^2 x_1^3 x_2^3 x_3$} -- (12.5, 1);
\draw[dashed, gray, ->] (6.5,2) node [left][black] {$\beta^2 x_1^3 x_2 x_3^3$} -- (9, 1);

\draw (13, 0) node [below] {$x_1^2x_2^3$} -- node [above] {$\beta x_1^2x_2^3x_3$} 
      (11, 0) node [below] {$x_1^2x_2^2x_3$} -- node [above] {$\beta x_1^2x_2^2x_3^2$}
      (9, 0) node [below] {$x_1^2x_2x_3^2$} -- node [above] {$\beta x_1^2x_2x_3^3$} 
      (7, 0) node [below] {$x_1^2x_3^3$};

\draw (13, 2) node [above = 2pt, PineGreen] {$x_1^3x_2^2$} -- node [below] {$\beta x_1^3x_2^2x_3$} 
      (11, 2) node [above = 2pt ]  {$x_1^3x_2x_3$} -- node [below] {$\beta x_1^3x_2x_3^2$}
      (9, 2) node [above = 2pt ] {$x_1^3x_3^2$};

\draw (13, 2) -- node [above, sloped, pos=0.6] {$\beta x_1^3x_2^2x_3$} (11, 0);
\draw (11, 2) -- node [above, sloped, pos=0.6] {$\beta x_1^3x_2x_3^2$} (9, 0);
\draw (9, 2) -- node [above, sloped, pos=0.6] {$\beta x_1^3x_3^3$} (7, 0);
\draw (13, 2) -- node [right]{$\beta x_1^3x_2^3$}(13, 0);

\filldraw [PineGreen] (13,0) circle (3pt);
\filldraw [gray] (9,0) circle (3pt);
\filldraw [gray] (11,0) circle (3pt);
\filldraw [PineGreen] (13,2) circle (3pt);
\filldraw [gray] (9,2) circle (3pt);
\filldraw [gray] (11,2) circle (3pt);
\filldraw [gray] (7,0) circle (3pt);
\draw [PineGreen] [very thick] (13,0) -- (13, 2);

\normalsize

\draw (11, -2) node {\text{$\mathscr{L}_{s_2s_1, \lambda} = \pi_2^{(\beta)} ( \mathscr{L}_{s_1})$}};

\draw (2, -2) node {\text{$\mathscr{L}_{s_1s_2, \lambda}  = \pi_1^{(\beta)} ( \mathscr{L}_{s_2})$}};
\end{tikzpicture}
\caption{Lascoux polynomials for $s_1s_2$ and $s_2s_1$}\label{fig:s1s2s2s1}
\end{figure}
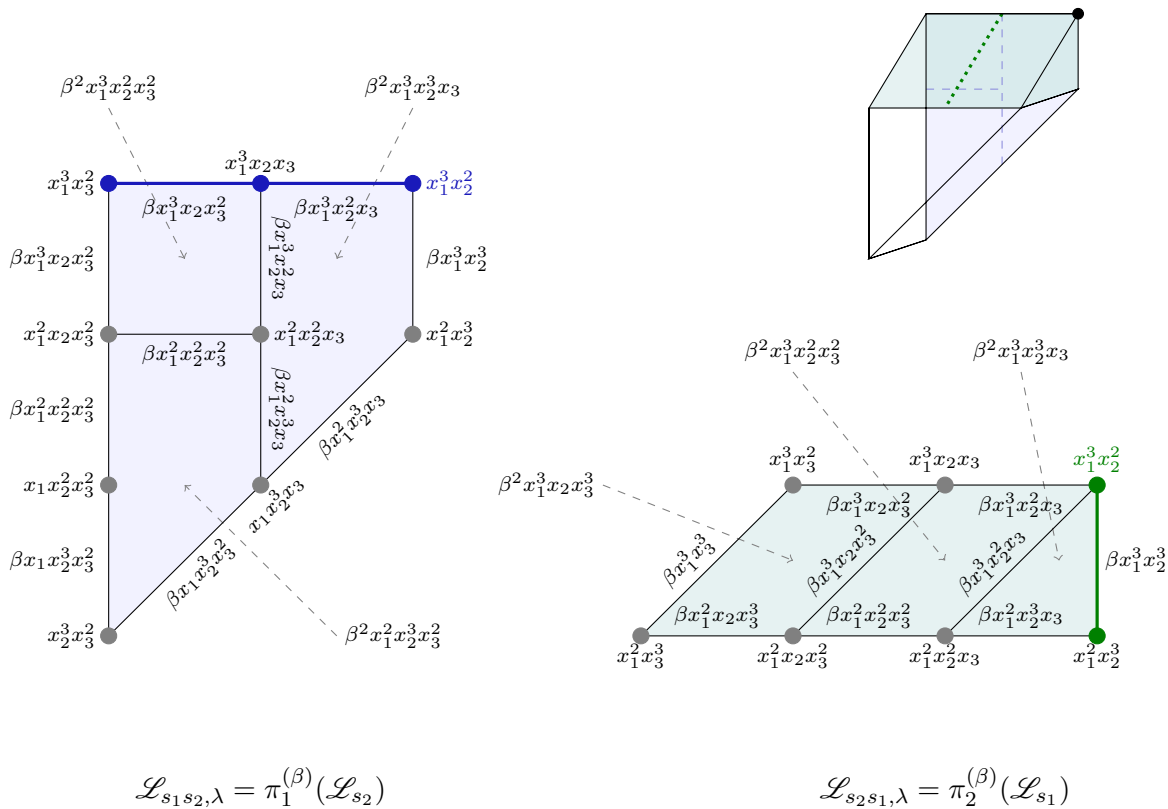

Finally, acting by $\pi_1^{(\beta)}$ on $\mathscr{L}_{s_2s_1, \lambda}$, we get $\mathscr{L}_{s_1s_2s_1, \lambda}$, with monomials corresponding to the cells in the cellular decomposition of the polytope shown on Figure~\ref{fig:cell3dim}. Moreover, only the colored one-dimensional and two-dimensional faces correspond to zero, while exactly one nonzero monomial corresponds to each of the remaining cells. The proof is by direct computation.
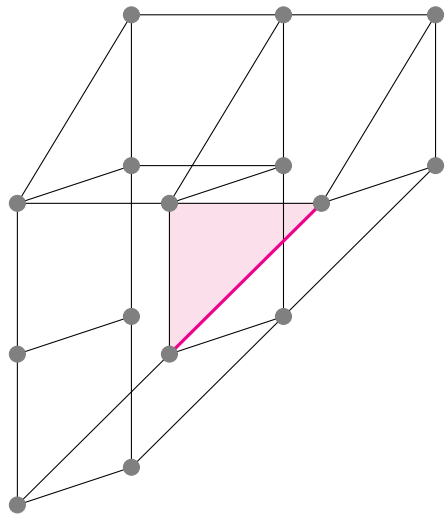
\begin{figure}[h!]

\begin{tikzpicture} 
\filldraw[magenta!15] (2,2) -- (2, 4) -- (4,4);

\draw (0,0) -- (0,4) -- (4, 4) -- (0,0) -- (1.5,0.5) --  (5.5,4.5) -- (5.5,6.5) -- (4, 4);
\draw (3.5,6.5) -- (3.5,2.5) -- (2,2) -- (2,4) -- (3.5,6.5) -- (5.5,6.5);
\draw (1.5,6.5) -- (1.5,0.5);
\draw (2, 4) -- (3.5, 4.5) -- (1.5, 4.5) -- (0, 4) -- (1.5, 6.5) -- (3.5, 6.5);
\draw (4, 4) -- (5.5, 4.5);
\draw (0, 2) -- (1.5, 2.5);

\draw [magenta, very thick] (2,2) -- (4, 4);

\filldraw [gray] (0,0) circle (3pt);
\filldraw [gray] (0,2) circle (3pt);
\filldraw [gray] (0,4) circle (3pt);
\filldraw [gray] (2,2) circle (3pt);
\filldraw [gray] (2,4) circle (3pt);
\filldraw [gray] (4,4) circle (3pt);
\filldraw [gray] (1.5,0.5) circle (3pt);
\filldraw [gray] (3.5,2.5) circle (3pt);
\filldraw [gray] (5.5,4.5) circle (3pt);
\filldraw [gray] (5.5,6.5) circle (3pt);
\filldraw [gray] (3.5,6.5) circle (3pt);
\filldraw [gray] (1.5,6.5) circle (3pt);
\filldraw [gray] (1.5,4.5) circle (3pt);
\filldraw [gray] (1.5,2.5) circle (3pt);
\filldraw [gray] (3.5,4.5) circle (3pt);

\end{tikzpicture}

    \caption{Cellular decomposition of $GZ(3,2,0)$}
    \label{fig:cell3dim}
\end{figure}

\subsection{General case}\label{ssec:ex-general}
Note that if a polynomial $f$ is symmetric over $x_i$ and $x_{i+1}$, then for every polynomial $g$ we have $\pi_i^{(\beta)}(f \cdot g) = f \cdot \pi_i^{(\beta)}(g)$. This means that for every partition $(a,b,c)$ we have the following equality of Lascoux polynomials:
 \[
 \cL_{(a,b,c)}(x_1,x_2,x_3)=(x_1x_2x_3)^c\cdot\cL_{(a-c,b-c,0)}(x_1,x_2,x_3).
 \]
Moreover, Remark~\ref{rem:shifts} states that $GZ(a,b,c)$ is obtained from $GZ(a-c, b-c, 0)$ by a parallel translation by integer vector $(c,c,c)$. 

This means that it is enough to describe the Lascoux polynomials for permutation $u \in S_3$ and partition $\lambda = (a, b, 0)$. 
As before, first put the monomial $x_1^ax_2^b$ in the vertex corresponding to the identity permutation. Then put monomials of the polynomial $\pi_1^{(\beta)}(x_1^ax_2^b)$ along the corresponding one-dimensional face. Under the action of $\pi_2^{(\beta)}$, each monomial of $\mathscr{L}_{s_1, \lambda}$ ``expands'' into a row in a subdivision of a two-dimensional face. For instance, the highest row corresponds to $\mathscr{L}_{s_2, \lambda}$ (see Figure~\ref{fig:example-general}).

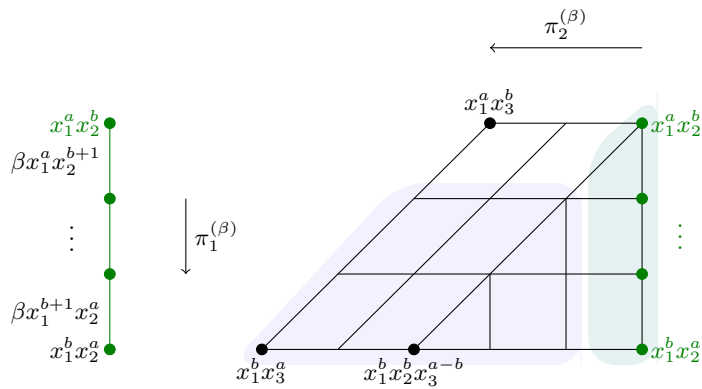
\begin{figure}[h!]
\begin{tikzpicture} [rotate = 180]
\footnotesize
    \draw [PineGreen] (12,0) node [left] {$x_1^ax_2^b$} -- node [black] [left] {$\beta x_1^ax_2^{b+1}$} (12,1) -- (12,2) --node [black] [left] {$\beta x_1^{b+1}x_2^a$}  (12,3)node [black] [left] {$x_1^bx_2^a$} ;
    \draw [->] (11,1) -- node [right] {$\pi_1^{(\beta)}$}(11,2);
    \draw (12.5, 1.5) node [rotate = 90] {\ldots};

    \filldraw [PineGreen] (12,0) circle (2pt);
    \filldraw [PineGreen] (12,1) circle (2pt);
    \filldraw [PineGreen] (12,2) circle (2pt);
    \filldraw [PineGreen] (12,3) circle (2pt);

    \filldraw [blue!5] [rounded corners=10pt](5.8, 0.8) -- (5.8, 3.2) -- (10.4, 3.2) -- (8.2, 0.8) -- (5.8, 0.8) -- (5.8, 3.2);
    \filldraw[teal!10!white] [rounded corners=10pt](4.8, -0.4) -- (4.8, 3.2) -- (5.7, 3.2) -- (5.7, 0.4) -- (4.8, -0.4) -- (4.8, 3.2);

    \draw (5, 0) -- (5,3) -- (10,3) -- (7,0) -- (5,0);
    \draw (5,0) -- (8,3);
    \draw (6,0) -- (9,3);
    \draw (5,1) -- (8,1);
    \draw (5,2) -- (9,2);
    \draw (6,1) -- (6,3);
    \draw (7,2) -- (7,3);

    \filldraw [PineGreen] (5,0) node [right] {$x_1^a x_2^b$} circle (2pt);
    \filldraw [PineGreen] (5,1) circle (2pt);
    \filldraw [PineGreen] (5,2) circle (2pt);
    \filldraw [PineGreen] (5,3) node [right]  {$x_1^b x_2^a$} circle (2pt);
    \draw (4.5, 1.5) node [PineGreen][rotate = 90] {$\ldots$};

    \filldraw  (8,3) node [below] {$x_1^b x_2^b x_3^{a-b}$} circle (2pt);
    \filldraw  (10,3) node [below] {$x_1^b x_3^a$} circle (2pt);
    \filldraw  (7,0) node [above] {$x_1^a x_3^b$} circle (2pt);
    \draw [->] (5,-1) -- node [above] {$\pi_2^{(\beta)}$}(7,-1);
\end{tikzpicture}
\caption{Subdivision of faces for $GZ(a,b,0)$}\label{fig:example-general}
\end{figure}

Next we split the monomials of $\mathscr{L}_{s_2,\lambda}$ into several groups. The first one (colored blue in Figure~\ref{fig:example-general}) can be represented as $ x_3\pi_2^{(\beta)}\pi_1^{(\beta)}(x_1^{a-1}x_2^b)$. Since $x_3$ is symmetric with respect to $x_1$ and $x_2$, it follows that $\pi_1^{(\beta)}(x_3\pi_2^{(\beta)}\pi_1^{(\beta)}(x_1^{a-1}x_2^b)) = x_3 \pi_1^{(\beta)}(\pi_2^{(\beta)}\pi_1^{(\beta)}(x_1^{a-1}x_2^b))$, hence it is the case for smaller $a$ and $b$.
The monomials in the second group (colored green) form a polynomial that is symmetric over $x_1$ and $x_2$; denote it by $f$.  It is easily shown that $\pi_1^{(\beta)}(f) = f$. To complete the construction in is enough to check that the remaining monomials expand into columns of the ``correct'' size, consistent with the size of the polytope $GZ(a, b, 0)$. The proof is by direct computation.

\begin{remark}
    Note that for general $a$ and $b$, ``most'' cells look like (open) standard unit cubes; in particular, this is true for the ``interior'' cells, i.e. those with the closure not meeting the boundary of $GZ(a,b,0)$. Moreover, the cells corresponding to the zero monomial also lie in the boundary of the polytope; these are two cells of dimension 1 or 2 adjacent to the nonsimple vertex, shown in purple in Figure~\ref{fig:cell3dim}. This is the case in general: such cells always belong to the boundary of the polytope and cannot have  dimension $0$ or $n(n-1)/2$. However, for Gelfand--Zetlin polytopes of nondominant weights $\dim GZ(\lambda)<n(n-1)/2$, and cells corresponding to zero monomials can have maximal dimension, i.e. dimension equal to the dimension of the polytope itself.
\end{remark}

\section{Enhanced Gelfand--Zetlin patterns, cellular decomposition and formula for Lascoux polynomials}\label{sec:main}

In this section we give the main results of this paper. We start with constructing a cellular decomposition for $GZ(\lambda)$ and assigning a monomial to each cell. A cell is called \emph{efficient} if this monomial is nonzero and \emph{inefficient} otherwise. Then we state the main theorem: a Lascoux polynomial $\cL_{w,\lambda}$ is equal to the sum of monomials for the cells located in the set of dual Kogan faces corresponding to $w$. The proof of this result is given in the two subsequent sections. 

\subsection{Enhanced Gelfand--Zetlin patterns}\label{ssec:constr} Here we present a construction for cells that provide a cellular decomposition of $GZ(\lambda)$. These cells are convex polytopes without boundary; by construction, the set of zero-dimensional cells will coincide with the set of integer points of $GZ(\lambda)$. This decomposition is regular in the following sense: if the closure of a given cell does not meet the boundary of $GZ(\lambda)$, this cell is just a face of the standard unit cube.

The cells are indexed by the so-called \emph{enhanced Gelfand--Zetlin patterns}, i.e. Gelfand--Zetlin patterns with some additional data, which we call \emph{enhancement}. These data are of two kinds: first, some elements in a pattern may be encircled, and second, some pairs of neighbor elements in consecutive rows can be joined by an edge. 

Informally, a pattern without enhancement stands for the ``maximal'' point of the closure of the corresponding cell, i.e. the point with the largest sum of coordinates. 

\begin{definition}\label{def:enhanced} A Gelfand--Zetlin pattern with the top row $(\lambda_n,\dots,\lambda_1)$ with some entries encircled and with edges between certain neighboring entries is said to be an \emph{enhanced Gelfand--Zetlin pattern}, if these elements satisfy the following conditions:

\begin{enumerate}
	\item\label{cond:1} The numbers in the first row are encircled.
	
	\item\label{cond:2} The two entries joined by an edge must be equal, and the bottom entry should be encircled. The converse does not have to be true: two equal neighboring entries are not necessarily joined by an edge.

	\item\label{cond:3} Two neighboring entries in a row are joined by edges with an entry above them if and only if they are joined with an entry below them. Pictorially this can be presented as follows:
\[
\xymatrixrowsep{1pc}\xymatrixcolsep{1pc}
\scalebox{0.7}{\xymatrix{ & *+[o][F.]{a} \ar@{-}[dl]\ar@{-}[dr]\\ *+[Fo]{a}&& *+[Fo]{a}\\ & *+[o][F.]{b}}}\qquad\text{or}\qquad
\scalebox{0.7}{\xymatrix{ & *+[o][F.]{b} \\  *+[o][F.]{a}&&  *+[o][F.]{a}\\ & *+[o][F]{a}\ar@{-}[ul]\ar@{-}[ur]}}\qquad\Rightarrow\qquad
\scalebox{0.7}{\xymatrix{ &  *+[o][F.]{a} \ar@{-}[dl]\ar@{-}[dr]\\  *+[o][F]{a}&&  *+[Fo]{a}\\ & *+[Fo]{a}\ar@{-}[ul]\ar@{-}[ur]}}.
\]
(a dotted circle around an entry means that it may be either encircled or not).

\item\label{cond:4} If two entries in the first row are equal, then the entry below them (which is equal to both of them) is encircled and connected to both of them by edges.

\item\label{cond:5} If $a<b$ and the pattern contains the following triangle: $\begin{smallmatrix} a&&b \\ &a
\end{smallmatrix}$, then the lowest entry in the triangle is encircled.  Pictorially:
\[
\xymatrixrowsep{1pc}\xymatrixcolsep{1pc}
\scalebox{0.7}{\xymatrix{ *+[o][F.]{a} &&  *+[o][F.]{b}\\& *+[o][F.]{a}}}\qquad\Rightarrow\qquad \scalebox{0.7}{\xymatrix{ *+[o][F.]{a} &&  *+[o][F.]{b}\\& *+[Fo]{a}\ar@{-}[ul]}}
\] 

\item\label{cond:6} If $a<b$ and the pattern contains the following triangle: $\begin{smallmatrix}a && b\\ &b
\end{smallmatrix}$ with the bottom entry encircled, then there is an edge between the two $b$'s:
\[
\xymatrixrowsep{1pc}\xymatrixcolsep{1pc}
\scalebox{0.7}{\xymatrix{ *+[o][F.]{a} &&  *+[o][F.]{b}\\& *+[o][F]{b}}}\qquad\Rightarrow\qquad \scalebox{0.7}{\xymatrix{ *+[o][F.]{a} &&  *+[o][F.]{b}\\& *+[o][F]{b}\ar@{-}[ur]}}
\]

\item\label{cond:7} For a triangle $\begin{smallmatrix}
	a && a\\ &a
\end{smallmatrix}$: if the two top entries can be connected by a path of edges, the bottom entry should be encircled and connected with them.

\item\label{lastcondition} If in a triangle $\begin{smallmatrix}
	a && a\\ &a
\end{smallmatrix}$ the bottom entry is encircled, then it should be connected with at least one of them by an edge.

\end{enumerate}
We denote the set of all enhanced patterns with the first row $\lambda$ by $\cP(\lambda)$.
\end{definition}

\begin{example}\label{ex:dualkogan} The pattern
$\begin{smallmatrix} 0 && 1&& 2\\ &1&& 2\\ &&2\end{smallmatrix}$ has eight enhancements.
\[
\xymatrixrowsep{1pc}\xymatrixcolsep{1pc}
\scalebox{0.7}
{\xymatrix{
*+[o][F]{0} && *+[o][F]{1} && *+[o][F]{2}\\
& 1 &&2 \\
&&2}
}\qquad\qquad
\scalebox{0.7}
{\xymatrix{
*+[o][F]{0} && *+[o][F]{1} && *+[o][F]{2}\\
& *+[o][F]{1}\ar@{-}[ur]  &&2 \\
&&2}
}\qquad\qquad
\scalebox{0.7}
{\xymatrix{
*+[o][F]{0} && *+[o][F]{1} && *+[o][F]{2}\\
& {1}  &&*+[o][F]{2}\ar@{-}[ur] \\
&&2}
}\qquad\qquad
\scalebox{0.7}
{\xymatrix{
*+[o][F]{0} && *+[o][F]{1} && *+[o][F]{2}\\
& {1}  && 2 \\
&&*+[o][F]{2}\ar@{-}[ur]}
}\]
\[
\xymatrixrowsep{1pc}\xymatrixcolsep{1pc}
\scalebox{0.7}{\xymatrix{
*+[o][F]{0} && *+[o][F]{1} && *+[o][F]{2}\\
& *+[o][F]{1}\ar@{-}[ur]  && *+[o][F]{2}\ar@{-}[ur] \\
&&2}
}\qquad\qquad
\scalebox{0.7}{\xymatrix{
*+[o][F]{0} && *+[o][F]{1} && *+[o][F]{2}\\
&1 && *+[o][F]{2}\ar@{-}[ur] \\
&&*+[o][F]{2}\ar@{-}[ur]}
}\qquad\qquad
\scalebox{0.7}{\xymatrix{
*+[o][F]{0} && *+[o][F]{1} && *+[o][F]{2}\\
& *+[o][F]{1}\ar@{-}[ur]   && 2 \\
&&*+[o][F]{2}\ar@{-}[ur]}
}\qquad\qquad
\scalebox{0.7}{\xymatrix{
*+[o][F]{0} && *+[o][F]{1} && *+[o][F]{2}\\
& *+[o][F]{1}\ar@{-}[ur]  && *+[o][F]{2}\ar@{-}[ur] \\
&&*+[o][F]{2}\ar@{-}[ur]}
}
\]
\end{example}

\begin{example}\label{ex:nonsimple} The pattern $\begin{smallmatrix}0 &&1 && 2\\ &1&& 1\\ &&1\end{smallmatrix}$ has four enhancements. 
\[
\xymatrixrowsep{1pc}\xymatrixcolsep{1pc}
\scalebox{0.7}{\xymatrix{
*+[o][F]{0} && *+[o][F]{1} && *+[o][F]{2}\\
& 1  && *+[o][F]{1}\ar@{-}[ul] \\
&&1}
}
\qquad\qquad
\scalebox{0.7}{\xymatrix{
*+[o][F]{0} && *+[o][F]{1} && *+[o][F]{2}\\
& 1  && *+[o][F]{1}\ar@{-}[ul] \\
&&*+[o][F]{1}\ar@{-}[ul]}
}
\qquad\qquad
\scalebox{0.7}{\xymatrix{
*+[o][F]{0} && *+[o][F]{1} && *+[o][F]{2}\\
& 1  && *+[o][F]{1}\ar@{-}[ul] \\
&&*+[o][F]{1}\ar@{-}[ur]}
}
\qquad\qquad
\scalebox{0.7}{\xymatrix{
*+[o][F]{0} && *+[o][F]{1} && *+[o][F]{2}\\
& *+[o][F]{1}\ar@{-}[ur]  && *+[o][F]{1}\ar@{-}[ul] \\
&&*+[o][F]{1}\ar@{-}[ur]\ar@{-}[ul]}
}
\]
Note that according to Definition~\ref{def:enhanced}~(\ref{cond:5}), the last entry in the second row must be encircled and connected to the middle entry in the first row.
\end{example}

An enhanced pattern can be viewed as a graph (with marked vertices). Consider the connected components of this graph. 

\begin{lemma} The connected components of an enhanced Gelfand--Zetlin pattern satisfy the following:
\begin{enumerate}
	\item\label{cc:cond1} the entries in the first row belong to the same connected component if and only if they have the same value;
	\item\label{cc:cond2} each connected component either has a unique highest vertex or contains at least one entry from the first row;
	\item\label{cc:cond3} all vertices in a connected component, possibly except the highest one, are encircled. In particular, the number of connected components is not less than the number of distinct $\lambda_i$'s plus the number of entries without circles.
\end{enumerate}
\end{lemma}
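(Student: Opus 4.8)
The plan is to deduce all three parts from two elementary facts together with a \emph{propagation} mechanism driven by conditions \ref{cond:3} and \ref{cond:7} of Definition~\ref{def:enhanced}. The two facts are: (F1) by condition~\ref{cond:2} an edge may join only two \emph{equal} entries, so equality is constant along any path of edges and hence every connected component is monochromatic, i.e.\ all its entries share one value $a$; and (F2) every row of a Gelfand--Zetlin pattern is weakly increasing from left to right (immediate from $y_{i,j}\le y_{i+1,j}\le y_{i,j+1}$, taking the outer two), so in any fixed row the entries equal to $a$ occupy a contiguous interval of columns. Part~\ref{cc:cond1} is then short: the ``only if'' direction is (F1); for ``if'', equal entries of the first row are consecutive because $\lambda$ is monotone, and condition~\ref{cond:4} joins each consecutive equal pair through the (forcedly equal) entry directly below, so transitivity places all entries of a given value in one component.

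The heart of the argument is a \textbf{propagation lemma} that I would isolate first: if two \emph{adjacent} entries $y_{i,c}$ and $y_{i,c+1}$ lie in one component $K$ (hence both equal $a$), then the entry $y_{i-1,c+1}$ directly above them and the entry $y_{i+1,c}$ directly below them also lie in $K$. Indeed the common lower neighbour $w=y_{i+1,c}$ is squeezed between two equal values and so equals $a$; since $y_{i,c}$ and $y_{i,c+1}$ are connected by a path of edges, condition~\ref{cond:7} forces $w$ to be joined to both, so $w\in K$; then the diamond condition~\ref{cond:3}, applied to $y_{i,c},y_{i,c+1}$ (now joined below to $w$), forces them to be joined above to $y_{i-1,c+1}$, so $y_{i-1,c+1}\in K$ as well. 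Using this I would prove, by downward induction on the row index $i$, that the entries of any component in each row form a contiguous interval, and that two distinct entries of $K$ in row $i$ force an entry of $K$ in row $i-1$. In the inductive step one takes a connecting path inside $K$: if it ever rises above row $i$ it already exhibits an entry of $K$ in row $i-1$; otherwise it descends into rows $\ge i+1$, and the children of its row-$i$ endpoints, combined with the contiguity already known one row lower, yield an \emph{adjacent} pair of $K$-entries in row $i$, to which the propagation lemma applies. \emph{The main obstacle is precisely this reduction of a long, multi-column connection to an adjacent pair}: one must exclude that a component links two entries of a row while skipping the columns between them, which is exactly what the interaction of (F2), the contiguity hypothesis for the lower row, and conditions~\ref{cond:3} and~\ref{cond:7} is designed to forbid.

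Granting this, the remaining parts follow quickly. For part~\ref{cc:cond2}: if a component $K$ has two vertices in its top row $r$, contiguity produces two adjacent ones there and the propagation lemma gives a vertex of $K$ in row $r-1$, contradicting maximality of $r$ unless $r=0$, i.e.\ unless $K$ meets the first row. For part~\ref{cc:cond3}: every non-highest vertex of a component is forced to be encircled, by a short case analysis on its two parents using conditions~\ref{cond:2} (lower endpoint of an edge), \ref{cond:5} and~\ref{cond:7}, where the interval structure guarantees the relevant triangle has the required shape; hence all vertices except the highest (which by part~\ref{cc:cond2} is unique whenever the component avoids the first row) are encircled. Finally, the counting statement is bookkeeping: by part~\ref{cc:cond1} the components meeting the first row correspond bijectively to the distinct values among the $\lambda_i$, while by parts~\ref{cc:cond2} and~\ref{cc:cond3} each \emph{uncircled} entry is the unique highest vertex of a component containing no first-row entry; distinct uncircled entries therefore lie in distinct components, disjoint from the former family, and adding the two disjoint families gives the asserted inequality.
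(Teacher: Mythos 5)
Your proposal is correct and follows essentially the same route as the paper: the paper's (very terse) proof also handles part~(2) by taking a path between two putative highest vertices and ``applying Definition~\ref{def:enhanced}~(\ref{cond:3}) repeatedly'' to force edges upward, which is exactly your propagation lemma combined with the reduction to an adjacent pair (cf.\ Figure~\ref{fig:unique}). Your write-up is a more carefully structured version of this (making the contiguity induction and the role of condition~(\ref{cond:7}) explicit), and the remaining parts are, as in the paper, direct consequences of the definition.
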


\begin{proof} This follows immediately from Definition~\ref{def:enhanced}. We only comment on~(\ref{cc:cond2}), which may be not completely obvious. Suppose there exists a connected component of an enhanced Gelfand--Zetlin pattern having two distinct highest vertices $a_{ij}$ and $a_{ik}$ in the $i$-th row, for $i\geq 1$ and $j<k$. There is a path connecting these entries; applying Definition~\ref{def:enhanced} (\ref{cond:3}) repeatedly, we see that every two neighbor elements in the ``interior'' of this path are linked by an edge (in 
 particular, they are all equal). This means that, in particular, $a_{ij}$ and $a_{i,j+1}$ are connected by edges with $a_{i+1,j}$, which is encircled. So, condition~(\ref{cond:3}) implies that $a_{i-1,j+1}$ is connected with $a_{ij}$, as well as with $a_{i,j+1}$, and hence the latter elements are not the highest ones in their connected component.
 
 Figure~\ref{fig:unique} shows how existence of a path between two elements in the same row, marked by squares in the left picture,  implies additional equalities, shown by dashes in the right picture.
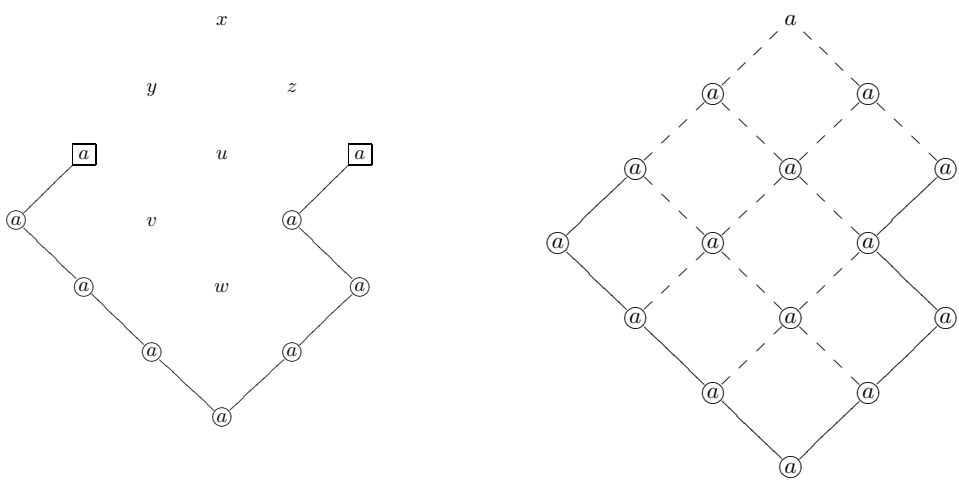
\begin{figure}[h!]
\[
\scalebox{0.7}{\xymatrix{
&&& x\\
&& y&& z\\
&*+[oo][F]{a} && {u} && *+[oo][F]{a}&\\
*+[o][F]{a}\ar@{-}[ur]  && {v} && *+[o][F]{a}\ar@{-}[ur] \\
&*+[o][F]{a}\ar@{-}[ul] && {w} && *+[o][F]{a}\ar@{-}[ul]\\
&& *+[o][F]{a}\ar@{-}[ul] && *+[o][F]{a}\ar@{-}[ur]\\
&&& *+[o][F]{a}\ar@{-}[ul]\ar@{-}[ur]\\
}
}
\qquad\qquad
\scalebox{0.8}{\xymatrix{
&&&{a}\\
&&*+[o][F]{a}\ar@{--}[ur] && *+[o][F]{a}\ar@{--}[ul] \\
&*+[o][F]{a}\ar@{--}[ur] && *+[o][F]{a}\ar@{--}[ul] \ar@{--}[ur] && *+[o][F]{a}\ar@{--}[ul] &\\
*+[o][F]{a}\ar@{-}[ur]  && *+[o][F]{a}\ar@{--}[ur]\ar@{--}[ul] && *+[o][F]{a}\ar@{-}[ur]\ar@{--}[ul] \\
&*+[o][F]{a}\ar@{--}[ur]\ar@{-}[ul] && *+[o][F]{a}\ar@{--}[ur]\ar@{--}[ul] && *+[o][F]{a}\ar@{-}[ul]\\
&& *+[o][F]{a}\ar@{-}[ul]\ar@{--}[ur] && *+[o][F]{a}\ar@{-}[ur]\ar@{--}[ul]\\
&&& *+[o][F]{a}\ar@{-}[ul]\ar@{-}[ur]\\
}
}
\]
\caption{Uniqueness of highest vertex in a connected component}\label{fig:unique}	
\end{figure}

\end{proof}

The statement~(\ref{cc:cond3}) from the previous lemma motivates the following definitions.
\begin{definition}\label{def:rankpattern}
	The \emph{rank} $\rk P$ of an enhanced pattern $P$ is the number of entries without circles.
\end{definition}

\begin{definition}\label{def:ineffpattern}
	An enhanced pattern $P$ is said to be \emph{inefficient} if it contains a triangle of the form $\begin{smallmatrix}a && a\\ &a
\end{smallmatrix}$ such that its bottom entry is not connected with the right one by an edge, and \emph{efficient} otherwise. The set of all efficient enhanced patterns with the first row $\lambda$ is denoted by $\cP^+(\lambda)$.
\end{definition}

For instance, in Example~\ref{ex:nonsimple} the first two enhanced patterns are inefficient, while the last two are efficient, and all patterns in Example~\ref{ex:dualkogan} are efficient.

\begin{prop}\label{prop:zeroefficient} Every enhanced pattern of rank zero is efficient.
\end{prop}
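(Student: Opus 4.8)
The plan is to argue by contradiction and reduce the whole statement to a single connectivity claim about the graph underlying the pattern. Suppose $P\in\cP(\lambda)$ has $\rk P=0$, so that by Definition~\ref{def:rankpattern} every entry of $P$ is encircled, and suppose $P$ were inefficient. By Definition~\ref{def:ineffpattern} this means $P$ contains a triangle $\begin{smallmatrix}a&&a\\&a\end{smallmatrix}$ whose bottom entry is not joined to the right one by an edge. The key observation is that the two top entries of this triangle are horizontally adjacent and equal, so once I show that they lie in the same connected component, condition~(\ref{cond:7}) forces the bottom entry to be joined by edges to both of them, in particular to the right one, which is the desired contradiction. Since $\rk P=0$ the ``encircled'' clause of~(\ref{cond:7}) is automatic, so the whole content of that condition here is exactly the production of these two edges.

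Thus the heart of the matter, and what I expect to be the main obstacle, is the following claim, which I would isolate as a lemma: \emph{in a rank-zero pattern any two horizontally adjacent entries of equal value are joined by a path of edges.} I would prove it by induction on the index $i$ of the row containing the pair $y_{i,j}=y_{i,j+1}=a$. The base case $i=0$ is immediate: two equal, adjacent entries of the first row are joined through the entry directly below them by condition~(\ref{cond:4}). For the inductive step I first invoke the ``squeezing'' already used in the text, namely the inequalities $y_{i,j}\le y_{i-1,j+1}\le y_{i,j+1}$, which force $y_{i-1,j+1}=a$ as well; it then suffices to connect each of $y_{i,j}$ and $y_{i,j+1}$ to this common entry $y_{i-1,j+1}$.

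To connect $y_{i,j+1}$ to $y_{i-1,j+1}$ I look at the triangle $\begin{smallmatrix}y_{i-1,j+1}&&y_{i-1,j+2}\\&y_{i,j+1}\end{smallmatrix}$, in which $y_{i-1,j+2}\ge a$. If $y_{i-1,j+2}>a$ this is exactly the configuration of condition~(\ref{cond:5}), which (the bottom being encircled) yields the edge joining $y_{i,j+1}$ and $y_{i-1,j+1}$; if $y_{i-1,j+2}=a$ the triangle has all entries equal with top pair in row $i-1$, so the inductive hypothesis makes that top pair path-connected and condition~(\ref{cond:7}) then supplies the edge from the bottom to $y_{i-1,j+1}$. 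Symmetrically, the triangle $\begin{smallmatrix}y_{i-1,j}&&y_{i-1,j+1}\\&y_{i,j}\end{smallmatrix}$, with $y_{i-1,j}\le a$, connects $y_{i,j}$ to $y_{i-1,j+1}$, using condition~(\ref{cond:6}) when $y_{i-1,j}<a$ and the inductive hypothesis together with~(\ref{cond:7}) when $y_{i-1,j}=a$. Concatenating the two paths through $y_{i-1,j+1}$ proves the lemma, and with it the proposition.

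The difficulty to watch for is bookkeeping in the inductive step: one must check that all the auxiliary triangles actually exist (the column indices stay in range precisely because $y_{i,j+1}$ already lies in row $i$) and that each appeal to conditions~(\ref{cond:5}), (\ref{cond:6}) and~(\ref{cond:7}) is legitimate, which rests on the hypothesis $\rk P=0$ guaranteeing that all the relevant bottom entries are encircled. It is worth noting that the argument never uses the bottom entry of the original triangle, so the connectivity claim is genuinely a statement about adjacent equal entries alone, and the induction closes cleanly.
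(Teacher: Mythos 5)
Your proof is correct and is essentially the contrapositive of the paper's argument: the paper observes that inefficiency forces the two equal, horizontally adjacent top entries of the offending triangle into distinct connected components and then asserts that one of these components must contain a non-encircled vertex, which is exactly the contrapositive of your connectivity lemma. The only real difference is that you prove that connectivity claim in full, by induction on the row index using conditions (\ref{cond:4})--(\ref{lastcondition}) of Definition~\ref{def:enhanced} together with the squeezing inequality $y_{i,j}\le y_{i-1,j+1}\le y_{i,j+1}$, whereas the paper dispatches it in one sentence via its lemma on connected components; your case analysis and index bookkeeping are sound.
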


\begin{proof}
Take an inefficient enhanced pattern $P$. This means that it contains a triangle of the form $\begin{smallmatrix}a && a\\ &a\end{smallmatrix}$ such that there is no edge between the bottom and the right entries. Definition~\ref{def:enhanced}~(\ref{cond:7}) implies that these two entries are contained in different connected components, both marked with the same number $a$. This means that at least one of these components contains a vertex without circle, so the rank of $P$ cannot be zero.
\end{proof}

Moreover, it turns out that for an efficient enhanced pattern, the edges provide redundant data. Namely, we have the following lemma.

\begin{lemma}\label{lem:redundant}
	The edges in an efficient enhanced pattern are uniquely determined by positions of encircled vertices.
\end{lemma}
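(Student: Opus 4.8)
The plan is to show that, once the values of a Gelfand--Zetlin pattern and the set of its encircled entries are fixed, the presence or absence of each individual edge is forced by the axioms of Definition~\ref{def:enhanced} together with efficiency. Every edge is a side of a unique small triangle $\begin{smallmatrix} a && b \\ & c\end{smallmatrix}$ with $a\le c\le b$, joining the bottom entry $c$ either to the upper-left entry $a$ (the \emph{left edge}) or to the upper-right entry $b$ (the \emph{right edge}); here the top row is treated as the $0$-th row via $y_{0i}=\lambda_{n+1-i}$. By Definition~\ref{def:enhanced}~(\ref{cond:2}) an edge may be present only if its two endpoints are equal and $c$ is encircled, so I first record that the left edge can occur only when $a=c$ and the right edge only when $c=b$.

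I would first dispose of the right edges, which are determined directly from the data. If $c<b$ there is no right edge. If $a<c=b$, then Definition~\ref{def:enhanced}~(\ref{cond:6}) together with~(\ref{cond:2}) shows that the right edge is present if and only if $c$ is encircled. If $a=c=b$ (an \emph{all-equal} triangle), efficiency (Definition~\ref{def:ineffpattern}) says precisely that the right edge is present, and by~(\ref{cond:2}) this also forces $c$ to be encircled. Thus in every case the right edge is present exactly when $c=b$ and $c$ is encircled, a condition read off from the values and the circles alone.

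The left edges I would handle by induction on the row index of the bottom entry $c$. If $a<c$ there is no left edge, and if $a=c<b$ then Definition~\ref{def:enhanced}~(\ref{cond:5}) forces the left edge to be present. The only remaining, and crucial, case is the all-equal triangle $\begin{smallmatrix} a && a \\ & a\end{smallmatrix}$ with bottom $c=y_{ij}$, upper-left $l=y_{i-1,j}$ and upper-right $r=y_{i-1,j+1}$. Here I apply the ``diamond'' axiom Definition~\ref{def:enhanced}~(\ref{cond:3}) to the neighboring pair $(l,r)$ of row $i-1$, whose common lower vertex is $c$ and whose common upper vertex is $u=y_{i-2,j+1}$: it states that the edges $l$--$u$ and $r$--$u$ are both present if and only if the edges $l$--$c$ and $r$--$c$ are both present. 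Since the right edge $r$--$c$ is already known to be present, the right-hand side reduces to ``the left edge $l$--$c$ is present''. On the left-hand side, $l$--$u$ is the right edge of the triangle with bottom $l$, which was determined directly in the previous step, while $r$--$u$ is the left edge of the triangle with bottom $r$, whose bottom lies in row $i-1$ and is therefore determined by the induction hypothesis. Hence the left edge of our triangle is pinned down by the circles and values.

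For the base of the induction, the triangles with bottom in the first row have their upper entries among the $\lambda_i$; an all-equal such triangle corresponds to two equal adjacent entries of the top row, and Definition~\ref{def:enhanced}~(\ref{cond:4}) then forces both edges to be present, so no recursion is needed. I expect the main obstacle to be exactly the all-equal triangle: it is the only configuration in which Definition~\ref{def:enhanced}~(\ref{cond:2}) leaves an edge a priori free, and pinning it down requires both the global role of efficiency, which guarantees the right edge, and the propagation supplied by the diamond axiom~(\ref{cond:3}) that feeds the determination downward row by row. Once this is in place, every edge has been expressed as a function of the values and the encircled entries, which is the assertion of the lemma.
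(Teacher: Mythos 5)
Your proof is correct. It follows the same overall strategy as the paper's proof --- a top-to-bottom reconstruction in which every edge is forced by the axioms except in the all-equal triangle $\begin{smallmatrix} a && a\\ &a\end{smallmatrix}$, where efficiency supplies the right edge --- but it resolves that one ambiguous case by a different mechanism. The paper asks whether the two top entries of such a triangle are joined by a path of edges in the rows above, and invokes conditions~(\ref{cond:7}) and~(\ref{lastcondition}) of Definition~\ref{def:enhanced} to conclude that the bottom entry is joined to both of them or only to the right one accordingly. You instead apply the diamond condition~(\ref{cond:3}) to the two top entries and their common upper neighbour $u$, which turns the question into whether the two edges joining them to $u$ are present; one of these is a right edge (determined outright in your first step) and the other is a left edge one row higher (determined by your induction, with condition~(\ref{cond:4}) as the base case in the first row). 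The two criteria are equivalent --- a path between two entries of the same row forces, by repeated use of~(\ref{cond:3}), both of them to be joined to $u$, which is exactly the content of Figure~\ref{fig:unique} --- but your version is more local and makes the recursion explicit, so it avoids any appeal to the global connectivity discussion; the paper's version, by contrast, reads more directly off the statement of condition~(\ref{cond:7}). Either argument establishes the lemma.
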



\begin{proof} Given a set of encircled vertices in an efficient enhanced pattern, we can uniquely reconstruct the set of edges as follows. Let us scan the enhanced pattern row by row, from top to bottom, starting from the second row, and draw edges going up from some of the encircled vertices. For each encircled entry $a_{ij}$, consider the two elements $a_{i-1,j}$ and $a_{i-1,j+1}$ above it. If both of them are different from $a_{ij}$, then there is no edge going up from $a_{ij}$. If only one of them is equal to $a_{ij}$, and the other is not, we must join $a_{ij}$ with the element equal to it, according to Definition~\ref{def:enhanced},~(\ref{cond:5}) and~(\ref{cond:6}).

Finally, if $a_{ij}$ is equal to the both entries  $a_{i-1,j}$ and $a_{i-1,j+1}$, two cases may occur. If there is a path of edges situated in rows with numbers less than or equal to $i-1$ and joining these two entries, then $a_{ij}$ must be joined with both of them, as prescribed by Definition~\ref{def:enhanced},~(\ref{cond:7}). Otherwise, according to condition~(\ref{lastcondition}) it should be connected with only one of these two entries; since our enhanced pattern is efficient, this is the right entry $a_{i-1,j+1}$. This procedure produces the set of edges in a unique way.
\end{proof}


For an efficient enhanced GZ-pattern $P$, we assign to it a monomial $x^P$ in the following way. Let $S_i(P)$ be the sum of numbers in the $i$-th row of the pattern $P$, with $S_0(P)=\lambda_1+\dots+\lambda_n$, and let $D_i(P)$ stand for the number of entries without circles in the $i$-th row of $P$. Denote $d_{n+1-i}=d_{n+1-i}(P)=S_{i-1}(P)-S_i(P)+D_i(P)$. Then
\[
x^P=\beta^{\rk P}x_1^{d_1}\dots x_n^{d_n}. 
\]
For an inefficient enhanced GZ-pattern $P$ we formally set $x^P=0$.

In the next section we construct a cellular decomposition of $GZ(\lambda)$, with cells corresponding to enhanced patterns, and with the dimension of a cell being equal to the rank of its enhanced pattern. Some of these cells will correspond to monomials in Lascoux polynomials; as we will see, these will be exactly the cells constructed from the efficient enhanced patterns. This is the motivation behind Definition~\ref{def:ineffpattern}.

\subsection{Cellular decomposition of Gelfand--Zetlin polytope}\label{ssec:monomials}
In this subsection, we construct a cellular decomposition of $GZ(\lambda)$. The cells of this decomposition are indexed by enhanced patterns; moreover, the dimension of a cell is equal to the rank of the corresponding enhanced pattern.

Consider an enhanced Gelfand--Zetlin pattern $P$. For each such pattern we write a set of equalities and inequalities that, together with the inequalities defining $GZ(\lambda)$, defines a subset $C_P\subset GZ(\lambda)$. As we will show further, these sets are pairwise disjoint, open in their affine spans and homeomorphic to open balls; they define a cellular decomposition of $GZ(\lambda)$ compatible with the polytope structure (i.e., every face of $GZ(\lambda)$ is also a union of cells).

Recall that we denote the coordinates in $\RR^{\frac{n(n-1)}2}\supset GZ(\lambda)$ by $y_{ij}$, with $1\leq i\leq n-1$ and $1\leq j\leq n+1-i$. We also fix the topmost row of a Gelfand--Zetlin tableau by setting $y_{0,j}=\lambda_{n+1-j}$. The inequalities that define the polytope are given by the tableau~(\ref{eq:gz-tableau}) on p.~\pageref{eq:gz-tableau}: these are
\[
y_{i-1,j}\leq y_{ij}\leq y_{i-1,j+1},
\]
for each $(i,j)$ in the aforementioned range.

Now we define the cellular decomposition of $GZ(\lambda)$.

\begin{construction}\label{def:cells} Let $P$ be an enhanced pattern with entries $a_{ij}$. To each coordinate $y_{ij}$ we assign an equality or a double inequality as follows:
\begin{enumerate}
	\item if there is an edge going up from $a_{ij}$ to $a_{i-1,j}$ (resp. to $a_{i-1,j+1}$), then $y_{ij}=y_{i-1,j}$ (resp. $y_{ij}=y_{i-1,j+1}$);
	\item if there are no edges going up from $a_{ij}$, and this entry is encircled, then $y_{ij}=a_{ij}$;
	\item if there are no edges going up from $a_{ij}$ and this entry is not encircled, we impose a double inequality on $y_{ij}$ as follows:
 \begin{enumerate}
     \item If the entry $a_{i-1,j}$ satisfies $a_{ij}-a_{i-1,j}\geq 2$, then $a_{ij}-1<y_{ij}$; otherwise, $y_{i-1,j}<y_{ij}$;
     \item If $a_{i-1,j+1}$ is equal to $a_{ij}$, we set $y_{ij}<y_{i-1,j+1}$; otherwise, $y_{ij}<a_{ij}$.
 \end{enumerate}
\end{enumerate}
Denote the set  defined by these equalities and inequalities by $\widehat{C_P}$. This is ``almost'' the required cell corresponding to $P$; however, it does not necessarily lie in $GZ(\lambda)$. To get an actual cell, take the affine span $L$ of $\widehat{C_P}$ and intersect $\widehat{C_P}$ with the relative interior of $GZ(\lambda)\cap L$ in $L$:
\[
C_P=\widehat{C_P}\cap (GZ(\lambda)\cap L)^0.
\]
This set is convex and open in $L$ (as the intersection of two open  convex sets); we shall see in Lemma~\ref{lem:52} that it is nonempty.
\end{construction}

This means the following. For each connected component in $P$ containing only encircled entries with the same numbers, all the corresponding coordinates are equal to this number. On the other hand, if a connected component has a non-encircled vertex, the corresponding coordinate can take values in an interval determined by the condition (\ref{cond:4}) of~Definition~\ref{def:enhanced}; note that the length of this interval does not exceed $i-1$, where $i$ is the row number. All the remaining coordinates in the same connected component (corresponding to encircled entries) are equal to this coordinate.

\begin{example}\label{ex:patterncell} Consider the following pattern $P$ and construct the set of inequalities on $y_{ij}$ defining the  cell $C_P$ corresponding to it. The right diagram provides the set of inequalities defining the Gelfand--Zetlin polytope in $\RR^6$.
\[
\xymatrixrowsep{1pc}\xymatrixcolsep{1pc}
\scalebox{0.8}{
\xymatrix{
*+[o][F]{1} && *+[o][F]{3} && *+[o][F]{7} && *+[o][F]{9}\\
& 3 && 4 && *+[o][F]{9}\ar@{-}[ur] \\
&& *+[o][F]{3}\ar@{-}[ul] && 5\\
&&&4}
}\qquad\qquad
\scalebox{0.8}{
\xymatrix{
{1} && {3} && {7} && {9}\\
& y_{11} && y_{12} && y_{13} \\
&& y_{21} && y_{22}\\
&&&y_{31}}
}
\]
The first row defines the following double inequalities and equality:
\[
2<y_{11}<3,\quad 3<y_{12}<4,\quad y_{13}=9.
\]
Now consider the second row. An edge from $a_{21}$ going up prescribes us to impose the equality $y_{21}=y_{11}$. From $a_{22}=5$ situated below $4$ and an encircled $9$, we get inequality $y_{12}<y_{22}<5$. Likewise, the third row defines double inequality $y_{21}<y_{31}<4$.

So the set $\widehat{C_P}$ is defined by the following inequalities and equalities:
\[
2< y_{11}<3,\quad 3<y_{12}<4,\quad y_{13}=9,\quad y_{21}=y_{11},\quad y_{12}<y_{22}<5,\quad y_{21}<y_{31}<4.
\]
Note that $\widehat{C_P}$ is not a subset of $GZ(9,7,3,1)$. Indeed, the integer point given by the following tableau, which is not a Gelfand--Zetlin pattern:
\[
\begin{matrix}
	1 && 3&& 7 &&9\\
	&2 && 3&& 9\\
	&&2&& 3\\
	&&&4
\end{matrix}
\]
belongs to the closure of $\widehat{C_P}$, but not to $GZ(9,7,3,1)$. To get a cell in the Gelfand--Zetlin polytope, we need to take the affine span of $\widehat{C_P}$, which is a subspace $L$ defined by the equations $y_{13}=9$, $y_{21}=y_{11}$, intersect the interior of $GZ(9,7,3,1)$ with $L$, obtaining the following subset:
\[
1< y_{11}<3,\quad 3<y_{12}<7,\quad y_{13}=9,\quad y_{21}=y_{11},\quad y_{12}<y_{22}<9,\quad y_{21}<y_{31}<y_{22}.
\]
The intersection of this subset with $\widehat{C_P}$ is the desired cell $C_P$. It is defined by the following inequalities and equalities:
\[
2< y_{11}<3,\quad 3<y_{12}<4,\quad y_{13}=9,\quad y_{21}=y_{11},\quad y_{12}<y_{22}<5,\quad y_{21}<y_{31}<\min(4,y_{22}).
\]
\end{example}

\subsection{Main results}

The following theorems are the main results of this paper.

\begin{theorem}\label{thm:cellular} The cells $C_P$ for $P\in\cP(\lambda)$ form a cellular decomposition of $GZ(\lambda)$.
\end{theorem}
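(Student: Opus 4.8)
The plan is to verify the three properties that together constitute a cellular decomposition compatible with the polytope structure: \textbf{(i)} each $C_P$ is a nonempty, bounded, relatively open convex set of dimension $\rk P$, hence homeomorphic to an open ball; \textbf{(ii)} the sets $\{C_P : P\in\cP(\lambda)\}$ are pairwise disjoint and cover $GZ(\lambda)$, so they partition it; and \textbf{(iii)} the closure of each cell is a union of cells, and every face of $GZ(\lambda)$ is a union of cells. Properties \textbf{(i)} and \textbf{(iii)} are comparatively routine once \textbf{(ii)} is established, so the combinatorial heart of the argument lies in \textbf{(ii)}.

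For \textbf{(i)} I would first note that $\widehat{C_P}$ is cut out by the edge- and circle-equalities together with the strict double inequalities of Construction~\ref{def:cells}, so it is convex and relatively open in its affine span $L$; intersecting with the relatively open convex set $(GZ(\lambda)\cap L)^0$ preserves convexity and relative openness in $L$, and the result is bounded, hence homeomorphic to an open ball of dimension $\dim L$ as soon as nonemptiness is known (this is exactly Lemma~\ref{lem:52}). To identify $\dim L$ with $\rk P$, I would use the connected-components lemma: the equalities force all coordinates in one connected component to coincide, a component that meets the first row or has an encircled top is pinned to an integer and contributes no freedom, while a component with a non-encircled top contributes a single free real parameter. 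Since by~\eqref{cc:cond3} every uncircled entry is the unique highest vertex of its own component, the number of free parameters equals the number of uncircled entries, i.e.\ $\rk P$, whence $\dim C_P=\rk P$.

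For \textbf{(ii)} I would construct a reading-off map $\Phi\colon GZ(\lambda)\to\cP(\lambda)$ and prove the two-sided statement: $y\in C_{\Phi(y)}$, and conversely $y\in C_P$ forces $P=\Phi(y)$. Scanning the tableau row by row from the top, for each coordinate $y_{ij}$ one records whether it coincides with one of the two entries directly above it (prescribing an edge and an encircling) or lies strictly between them (prescribing an uncircled, \emph{free} entry). The genuinely delicate part is recovering the integer $a_{ij}$ attached to a free coordinate: because a free coordinate may sweep through integer values — its admissible interval can be longer than $1$, as the value $a_{ij}=\lceil y_{ij}\rceil$ may be ``blocked'' by a triangle that forces an encircling via Definition~\ref{def:enhanced}\eqref{cond:5} — the correct $a_{ij}$ is the least \emph{admissible} integer exceeding $y_{ij}$, determined by the structure of the rows above rather than pointwise. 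The two directions then assert that this decorated tableau satisfies every condition of Definition~\ref{def:enhanced} and that $\Phi$ inverts $P\mapsto C_P$.

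The hard part will be precisely the well-definedness of $\Phi$, namely that the reading-off rule never violates the interlocking conditions~\eqref{cond:4}--\eqref{lastcondition}. The subtlety is that equal neighboring coordinates need not be joined by an edge and an integer-valued coordinate need not be encircled, so one must control the triangles $\begin{smallmatrix}a&&a\\&a\end{smallmatrix}$, $\begin{smallmatrix}a&&b\\&a\end{smallmatrix}$, and $\begin{smallmatrix}a&&b\\&b\end{smallmatrix}$ simultaneously. The key technical device I would develop is a downward induction showing that whenever two adjacent entries are both pinned to a common integer they are necessarily edge-connected through their shared ancestor in the rows above; this is exactly what conditions~\eqref{cond:4}--\eqref{cond:7} encode, and combined with Lemma~\ref{lem:redundant} it removes every apparent ambiguity in the edge set, so that distinct patterns can never produce the same defining system. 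Finally, for \textbf{(iii)} I would observe that relaxing some strict inequalities of $C_P$ to equalities corresponds to enlarging the circle/edge data of $P$ — a degeneration of enhanced patterns — so that $\overline{C_P}=\bigsqcup_Q C_Q$ over such degenerations $Q$; since each defining inequality of $GZ(\lambda)$ is among those that may be relaxed, every face of the polytope is likewise a union of cells, yielding compatibility with the polytope structure and completing the proof.
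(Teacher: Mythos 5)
Your proposal follows essentially the same route as the paper: your reading-off map $\Phi$ and its two-sided verification is exactly the paper's Lemma~\ref{lem:51}, your nonemptiness/dimension argument via connected components is Lemma~\ref{lem:52}, and your description of $\overline{C_P}$ as a union of cells indexed by degenerations of the enhancement is Lemma~\ref{lem:54}. You also correctly isolate the same delicate points the paper handles (the choice $a_{ij}=\max(\lceil y_{ij}\rceil,\,a_{i-1,j}+1)$ for free coordinates and the compatibility with conditions~(\ref{cond:4})--(\ref{lastcondition})), so the plan is sound and matches the published argument.
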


The proof of this theorem is given in \S~\ref{ssec:51}.

\begin{theorem}\label{thm:main1}
Let $\lambda$ be a partition. Then the Grassmannian Grothendieck polynomial $G_{\lambda}^{(\beta)}$ can be computed as follows:
$$
G_{\lambda}^{(\beta)} = \mathscr{L}_{w_0, \lambda} = \sum\limits_{P \in \cP^+(\lambda)}{x^P}.
$$
\end{theorem}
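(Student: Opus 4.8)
The plan is to reduce the statement to a purely combinatorial identity and then prove that identity by induction along the reduced word $\mathbf{w}_0$ for $w_0$, using the single-operator expansion of $\pi_i^{(\beta)}$ together with the cellular decomposition of Theorem~\ref{thm:cellular}. By Theorem~\ref{thm:cellular} the cells $C_P$, $P\in\cP(\lambda)$, tile $GZ(\lambda)$ with $\dim C_P=\rk P$, and by definition $x^P=0$ for inefficient patterns, so the right-hand side equals $\sum_{P\in\cP(\lambda)}x^P$. Thus it suffices to prove the identity $\pi_{w_0}^{(\beta)}(x^\lambda)=\sum_{P\in\cP(\lambda)}x^P$. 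A useful guiding check is that at $\beta=0$ only the rank-$0$ (all-encircled) patterns survive; these biject with the integer points of $GZ(\lambda)$ and the identity degenerates to the $w_0$-case of Theorem~\ref{thm:key-kst}, so the argument should be a $\beta$-refinement of the one behind that theorem.

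The basic tool is the action of a single operator on a monomial: writing $x^a=x_i^{a_i}x_{i+1}^{a_{i+1}}m$ with $m$ the product of the remaining variables and $a_i\ge a_{i+1}$, a direct computation from Definitions~\ref{def:divdiff} and~\ref{def:demlascoux} gives
\[
\pi_i^{(\beta)}(x^a)=m\Bigl(\sum_{j=0}^{a_i-a_{i+1}}x_i^{a_i-j}x_{i+1}^{a_{i+1}+j}+\beta\sum_{j=0}^{a_i-a_{i+1}-1}x_i^{a_i-j}x_{i+1}^{a_{i+1}+1+j}\Bigr).
\]
Geometrically, applying $\pi_i^{(\beta)}$ \emph{sweeps} one coordinate over an interval: each $\beta^0$-term is the character of a newly created $0$-cell (an integer point), while each $\beta^1$-term records an open unit segment joining two consecutive integer points, the extra factor of $\beta$ marking that the rank has gone up by one. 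The key bookkeeping point is that this matches the monomial rule $x^P=\beta^{\rk P}x_1^{d_1}\cdots x_n^{d_n}$ with $d_{n+1-i}=S_{i-1}(P)-S_i(P)+D_i(P)$: up to the reindexing between operator indices and pattern rows dictated by the construction, each $\beta$-half of a sweep raises by one the exponent of the variable onto which it pushes weight, and the summand $+D_i$ is exactly the accumulated shift produced by the non-encircled entries created in row $i$.

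With this in hand I would prove, by induction on the length of a prefix $u$ of $\mathbf{w}_0$, the refined statement that $\pi_u^{(\beta)}(x^\lambda)=\sum x^P$ where $P$ ranges over the enhanced patterns supported on the part of the diagram swept out so far. The base case $u=\id$ is the single vertex carrying $x^\lambda$, corresponding to the unique all-encircled rank-$0$ pattern. For the inductive step, applying the next operator $\pi_i^{(\beta)}$ to each monomial already produced expands it by the formula above; one checks that the new $0$- and $1$-dimensional sweeps are recorded by enhancements obeying exactly the rules of Construction~\ref{def:cells} and Definition~\ref{def:enhanced}: the equalities $y_{ij}=y_{i\pm1,\cdot}$ become edges, the frozen coordinates become encircled entries, and the single varying coordinate is the new non-encircled entry. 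When $u=w_0$ every coordinate has been swept, one obtains all of $\cP(\lambda)$, and the efficient patterns are precisely the surviving nonzero monomials.

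The main obstacle is the matching in the inductive step, because the conditions of Definition~\ref{def:enhanced} couple adjacent rows (conditions~\ref{cond:3}, \ref{cond:5}--\ref{cond:7}, and the triangle condition~\ref{lastcondition}), so one must show that the enhancement data produced by successive sweeps is consistent and that these coupling conditions are neither over- nor under-imposed. Two delicate sub-points arise. First, one must ensure that each positive-dimensional cell is counted exactly once with the correct power of $\beta$; this is where Lemma~\ref{lem:redundant} is used, guaranteeing that the edges are reconstructed from the circles and hence that distinct sweep-orders do not produce the same cell twice. Second, one must handle the non-simple vertices, where the coupling forces triangles $\begin{smallmatrix}a && a\\ &a\end{smallmatrix}$ and where inefficient patterns (whose monomials vanish) appear: here one checks that no spurious nonzero monomial is created, using that a sweep over a degenerate (length-zero) interval produces no new term, which is the combinatorial shadow of the fact that $\pi_i^{(\beta)}$ acts as the identity on monomials symmetric in $x_i,x_{i+1}$ (Proposition~\ref{prop:lascoux}).
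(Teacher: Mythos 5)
Your overall strategy---inducting along the reduced word $c_1c_2\cdots c_{n-1}$ for $w_0$, with each application of $\pi_i^{(\beta)}$ creating new cells by ``sweeping'' one coordinate, and reading the circles off from the steps where the $\beta$-degree does not grow---is the same as the paper's, and your reduction of the right-hand side to $\sum_{P\in\cP(\lambda)}x^P$ is fine. But there is a genuine gap in the inductive step. Your sweep formula for $\pi_i^{(\beta)}(x^a)$ is valid only when $a_i\ge a_{i+1}$, and the intermediate polynomials contain many monomials violating this: already for $n=3$ and $\lambda=(4,2,0)$ the polynomial $\pi_2^{(\beta)}\pi_1^{(\beta)}(x^\lambda)$ contains $\beta x_1^3x_2^4$ and the other $\lambda$-nonalternating monomials of the green region in Figure~\ref{fig:altnonalt}. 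Applying $\pi_1^{(\beta)}$ to such a monomial produces negatively signed terms, so the next stage is \emph{not} obtained by sweeping each monomial independently; there are cancellations between the contributions of different monomials. These nonalternating monomials are exactly those whose exponent vectors fail to interlace with the previous row and hence cannot be recorded as a row of a Gelfand--Zetlin pattern, so your inductive invariant cannot be verified monomial by monomial, and the mechanism you offer for cancellation (that a sweep over a length-zero interval produces nothing, because $\pi_i^{(\beta)}$ fixes monomials symmetric in $x_i,x_{i+1}$) addresses only the degenerate-interval case, not this one.

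The missing ingredient is precisely Lemma~\ref{lem:55} of the paper: the sum of all nonalternating monomials produced by a block $\pi_{c_{n-1}}^{(\beta)}$ can be grouped, column by column, into expressions of the form $\pi_1^{(\beta)}(m)-m$ (and their analogues deeper in the recursion), each of which is annihilated by the appropriate subsequent operators because $\pi_i^{(\beta)}\bigl(\pi_i^{(\beta)}(f)-f\bigr)=0$ by idempotency. This is what guarantees that only the alternating monomials---exactly those that can serve as the next row of a pattern---propagate, and it is the technical core of the proof; without it the induction does not close. A second, smaller omission: to turn ``every surviving monomial arises from a track'' into a bijection with $\cP^+(\lambda)$ one also needs that $\pi_{c_k}^{(\beta)}x^\mu$ is multiplicity free (Lemma~\ref{lem:multfree}), so that the track is recoverable from its sequence of monomials; your appeal to Lemma~\ref{lem:redundant} handles the reconstruction of edges from circles but not the possibility of two distinct tracks contributing to the same monomial within a block.
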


The proof of this theorem is given in \S\;\ref{ssec:52}.

We can generalize this result to get a description of Lascoux polynomials. Denote by $\cP^+(w, \lambda)$ the set of efficient patterns such that the corresponding cells are contained in the union of dual Kogan faces corresponding to $GZ(\lambda)$ and permutation $w$.

\begin{theorem}\label{thm:main2}
Let $w \in S_n$ be a permutation and $\lambda$ be a partition. Then the Lascoux polynomial $\cL_{w,\lambda}$ is equal to
$$
\mathscr{L}_{w, \lambda} = \sum\limits_{P \in \cP^+(w,\lambda)}{x^P}.
$$
\end{theorem}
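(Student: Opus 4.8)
The plan is to reduce Theorem~\ref{thm:main2} to Theorem~\ref{thm:main1} by an induction on the length $\ell(w)$, mirroring the inductive definition of $\cL_{w,\lambda}$ via the operators $\pi_i^{(\beta)}$. The base case $w = w_0$ is precisely Theorem~\ref{thm:main1}, and the base case $w = \id$ is the single vertex carrying the monomial $x^\lambda$. For the inductive step I would like to establish a \emph{local} version of the Demazure--Lascoux recursion at the level of cells: if $w = s_i w'$ with $\ell(w) = \ell(w') + 1$, then the set of efficient patterns $\cP^+(w,\lambda)$ should be obtained from $\cP^+(w',\lambda)$ by an operation on cells that precisely realizes the polynomial identity $\cL_{w,\lambda} = \pi_i^{(\beta)}(\cL_{w',\lambda})$ (or its inverse, depending on which direction the induction runs). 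The key geometric fact to exploit is that the collection of dual Kogan faces $F_{w,\lambda}$ and $F_{w',\lambda}$ differ by exactly the faces whose diagrams carry the extra edge $s_i$, so the new cells in $\cP^+(w,\lambda) \setminus \cP^+(w',\lambda)$ are exactly those living on this additional layer of faces.

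The heart of the argument is therefore a cell-by-cell matching of the action of $\pi_i^{(\beta)}$. Concretely, I would fix a monomial $x^\alpha$ appearing in $\cL_{w',\lambda}$, realized by some efficient pattern(s) in $\cP^+(w',\lambda)$, and compute $\pi_i^{(\beta)}(x^\alpha)$ by Definition~\ref{def:demlascoux}. Writing $\partial_i(x_i f + \beta x_i x_{i+1} f)$ out explicitly, one sees that $\pi_i^{(\beta)}$ applied to a single monomial $x_1^{a_1}\cdots x_n^{a_n}$ with $a_i \geq a_{i+1}$ produces the sum $\sum_{k} x^{\alpha + (k)} + \beta \sum_k x^{\alpha + (k)'}$ over an explicit ``interval'' of monomials interpolating between $x^\alpha$ and its $s_i$-image, with the $\beta$-terms shifted by one box. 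This is exactly the combinatorial pattern visible in Figures~\ref{fig:ids1s2}, \ref{fig:s1s2s2s1}: a $0$-cell expands into a chain of $0$-cells (the non-$\beta$ terms, weight-polytope vertices) interleaved with $1$-cells (the $\beta$-terms, the connecting edges). I would show that under the cell-to-monomial assignment $P \mapsto x^P = \beta^{\rk P} x_1^{d_1}\cdots x_n^{d_n}$ of \S\ref{ssec:constr}, the new cells produced by toggling the $y_{ij} = y_{i+1,j-1}$ equation associated to $s_i$ have ranks and projections $\pi$ that reproduce precisely this expansion, so that summing $x^P$ over the enlarged collection $\cP^+(w,\lambda)$ equals $\pi_i^{(\beta)}$ applied to the sum over $\cP^+(w',\lambda)$.

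Two bookkeeping points must be handled with care. First, the formula $\pi_i^{(\beta)}(f) = f$ when $f$ is symmetric in $x_i, x_{i+1}$ (Proposition~\ref{prop:lascoux}) must match the fact that efficiency conditions (\ref{cond:5})--(\ref{lastcondition}) in Definition~\ref{def:enhanced} force certain cells to collapse or to carry the zero monomial; these are exactly the inefficient patterns, and I must verify that the cells on the boundary which would contribute to an ``over-count'' are inefficient and hence contribute $x^P = 0$. Second, the identification of $\cP^+(w,\lambda)$ as cells lying in $F_{w,\lambda}$ requires knowing which defining equalities of Construction~\ref{def:cells} correspond to the dual Kogan equalities $y_{ij} = y_{i+1,j-1}$; this is the dictionary between encircled/edged patterns and face diagrams already implicit in \S\ref{ssec:dualkogan}, which I would make precise as a preliminary lemma.

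I expect the main obstacle to be the verification that the geometric operation of ``adding the $s_i$ layer of faces'' is compatible with $\pi_i^{(\beta)}$ \emph{at the level of individual monomials with correct $\beta$-powers}, rather than merely after summation. Because $\pi_i^{(\beta)}$ can both raise the degree (via the $\beta x_i x_{i+1}$ term, increasing $\rk$ by one, i.e.\ producing a higher-dimensional cell) and permute exponents, the matching is not a clean bijection on monomials but a bijection on cells that respects the interval structure of the expansion. Establishing that each efficient pattern in the larger set arises exactly once, with the right rank, and that no spurious contributions survive, is where the real combinatorial work lies; I anticipate needing Lemma~\ref{lem:redundant} to control the edge data and a careful induction on rows of the pattern, analogous to the ``expands into a row/column'' analysis sketched in \S\ref{ssec:ex-general}, but carried out uniformly in $n$ rather than ad hoc.
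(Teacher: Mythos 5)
Your overall strategy---induct on $\ell(w)$ and match each application of $\pi_i^{(\beta)}$ with a geometric operation on cells---is in the same spirit as the paper's argument (which also tracks the operators through the cell structure), but as written it has two genuine gaps. First, the induction is not set up coherently: you offer both $w=\id$ and $w=w_0$ as ``base cases''. If you induct upward from $\id$, Theorem~\ref{thm:main1} is not an input but merely the last instance of what you are proving; if you induct downward from $w_0$, the step would require inverting $\pi_i^{(\beta)}$, which is impossible since these operators are idempotent and have large kernels. Second, and more seriously, your key geometric claim is false as stated: passing from $w'$ to $w$ with $\ell(w)=\ell(w')+1$ \emph{enlarges} the dual Kogan faces (their diagrams \emph{lose} an edge, since $\dim F=\ell(w(F))$ and $w(F)=w_0w^-(F)$), and the reduced subwords of $\mathbf{w}_0$ with product $w_0w$ are obtained from those with product $w_0w'$ by deleting \emph{some} letter determined by the exchange property---not by toggling a designated $s_i$-edge. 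A single face for $w'$ lies in several faces for $w$ and vice versa, and identifying which cells are ``new'' requires the commutation and braid moves of Lemmas~\ref{lem:61}--\ref{lem:62}, which your proposal never invokes.

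The point you defer as ``where the real combinatorial work lies'' is in fact the entire content of the proof. Applying $\pi_i^{(\beta)}$ to an individual monomial $x^\alpha$ with $\alpha_i<\alpha_{i+1}$ produces \emph{negative} terms, so the expansion is not a cell-by-cell sum monomial-by-monomial: the nonalternating monomials must cancel, and in general they are annihilated only by the \emph{later} operators in the word (this is the content of Lemma~\ref{lem:55}), so the matching cannot be carried out locally at a single step as you propose. The paper resolves this by fixing the canonical right-adjusted word for $w$ read off from the empty places of the right-adjusted diagram (Lemma~\ref{lem:63}) and then proving Lemma~\ref{lem:6.4}, which characterizes exactly which nonalternating monomials survive---namely those for which an edge can be moved into position $i$ of the first row---and shows that their offspring land in the cells of the other, non-right-adjusted faces for the same permutation. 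Without an analogue of Lemma~\ref{lem:6.4} and the diagram-moving Lemmas~\ref{lem:61}--\ref{lem:62}, your inductive step cannot be completed.
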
 

We give the proof of this theorem in Section~\ref{sec:proofgen}.

\begin{corollary}\label{cor:positive} Let $\lambda$ be a partition and $u,w \in S_n$  be permutations such that $u \leq w$ in the Bruhat order on $S_n$. Then the polynomial $\mathscr{L}_{w, \lambda}-\mathscr{L}_{u,\lambda}$ has nonnegative coefficients.
\end{corollary}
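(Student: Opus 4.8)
The plan is to deduce Corollary~\ref{cor:positive} directly from the combinatorial description of Lascoux polynomials provided by Theorem~\ref{thm:main2}. The key observation is that both $\mathscr{L}_{w,\lambda}$ and $\mathscr{L}_{u,\lambda}$ are expressed as sums of the \emph{same} monomials $x^P$, ranging over efficient enhanced patterns whose cells lie in the appropriate unions of dual Kogan faces. Concretely, I would write
\[
\mathscr{L}_{w,\lambda}-\mathscr{L}_{u,\lambda}=\sum_{P\in\cP^+(w,\lambda)}x^P-\sum_{P\in\cP^+(u,\lambda)}x^P,
\]
so it suffices to show that $\cP^+(u,\lambda)\subseteq\cP^+(w,\lambda)$ as sets of efficient patterns. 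Since each $x^P$ is a monomial of the form $\beta^{\rk P}x_1^{d_1}\cdots x_n^{d_n}$ with nonnegative coefficient, any such containment of index sets immediately yields that the difference is a sum of monomials with nonnegative coefficients, proving the corollary.

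The heart of the argument is therefore the set-theoretic inclusion, which I expect to reduce to a statement about dual Kogan faces and the Bruhat order. By the definition of $\cP^+(w,\lambda)$, a pattern $P$ lies in this set exactly when its cell $C_P$ is contained in the union $\bigcup_{w(F)=w}F$ of dual Kogan faces corresponding to $w$. Thus I need to show that if $u\leq w$ in the Bruhat order, then every cell contained in the union of dual Kogan faces for $u$ is also contained in the union for $w$; equivalently, that the union of dual Kogan faces for $u$ is contained in the union for $w$. The natural route is via the subword-complex interpretation noted in the remark following Figure~\ref{fig:gz3-dualfaces}: dual Kogan faces correspond to subwords of the fixed reduced word $\mathbf{w}_0$, and $w(F)=w_0w^-(F)$. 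The Bruhat order relation $u\leq w$ should translate into a containment between the corresponding subcomplexes of the subword complex, hence between the associated unions of faces in $GZ(\lambda)$.

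The main obstacle, and the step requiring genuine care, is establishing precisely this compatibility between the Bruhat order and containment of unions of dual Kogan faces. One must verify that for $u\leq w$ the closure (or the appropriate union) of faces indexed by $u$ sits inside that indexed by $w$, which amounts to a lifting/subword statement: given a reduced subword of $\mathbf{w}_0$ representing $w_0 u$, one must exhibit it inside a subword representing $w_0 w$, or argue at the level of the cells themselves. Here the standard characterization of Bruhat order via subwords of a fixed reduced word is the essential input, combined with the explicit correspondence between subwords and dual Kogan faces. I would verify the base case where the two permutations differ by a single covering relation in the Bruhat order and then chain covers together, reducing the general inclusion to the elementary situation of adjacent ranks. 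Once this geometric containment is in place, the positivity conclusion follows formally from Theorem~\ref{thm:main2} and the manifest nonnegativity of the coefficients of each $x^P$.
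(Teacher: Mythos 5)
Your proposal is correct and takes essentially the same route as the paper: the paper's proof simply writes $\Gamma_{w,\lambda}$ for the union of dual Kogan faces attached to $w$, asserts $\Gamma_{u,\lambda}\subseteq\Gamma_{w,\lambda}$ for $u\leq w$ from the subword description of these faces, and then applies Theorem~\ref{thm:main2} exactly as you do. One small correction to your elaboration of the containment: since $u\leq w$ gives $w_0w\leq w_0u$, you must exhibit a reduced subword for $w_0w$ \emph{inside} the given reduced subword for $w_0u$ (fewer equations cut out a larger face), not embed the subword for $w_0u$ into one for $w_0w$ as your wording suggests.
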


\begin{proof} Denote the union of dual Kogan faces of $GZ(\lambda)$ corresponding to permutation $w$ by $\Gamma_{w, \lambda}$. The definition of Kogan faces in terms of subwords (cf.~\S\,\ref{ssec:dualkogan})  implies that $\Gamma_{u, \lambda} \subseteq \Gamma_{w, \lambda}$ if $u \leq w$ in the Bruhat order. Applying Theorem~\ref{thm:main2} completes the proof.
\end{proof}

\section {Proofs of the main results for the case of the longest permutation}\label{sec:proofs-w0}

\subsection {Proof of Theorem~\ref{thm:cellular}}\label{ssec:51}
In this section we show that the cells described in \S\,\ref{ssec:monomials} form a cellular decomposition of $GZ(\lambda)$. We split the proof into several lemmas.

\begin{lemma}\label{lem:51} For every point $y \in GZ(\lambda)$ there exists a unique cell $C_P$ such that $y \in C_P$. 
\end{lemma}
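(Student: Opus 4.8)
The plan is to prove that the cells $\{C_P\}_{P \in \cP(\lambda)}$ partition $GZ(\lambda)$ by giving an explicit algorithm that, starting from an arbitrary point $y \in GZ(\lambda)$, reconstructs a unique enhanced pattern $P$ with $y \in C_P$. The key observation is that Construction~\ref{def:cells} assigns to each coordinate $y_{ij}$ one of three mutually exclusive local states (an upward edge forcing an equality with a neighbor above, an encircled vertex forcing equality with its own integer value, or a bare vertex confined to a half-open interval). So I would read off, for each $y_{ij}$, which of these states holds for the given point $y$, and then verify that the resulting decorated tableau is a legal enhanced pattern and that $y$ lies in the corresponding cell.

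Concretely, I would process the tableau row by row, from top to bottom. The top row is fixed to $\lambda$ and is always encircled by condition~(\ref{cond:1}). For a coordinate $y_{ij}$ with entries $y_{i-1,j}$ and $y_{i-1,j+1}$ already processed above it, the relevant data is the pair of values $(\lfloor y_{ij}\rfloor$ or the exact value, together with which of the two bounding inequalities $y_{i-1,j}\le y_{ij}\le y_{i-1,j+1}$ are tight$)$. I would set $a_{ij}=\lceil y_{ij}\rceil$ when $y_{ij}$ is non-integral, and $a_{ij}=y_{ij}$ when it is an integer; then decide the enhancement: draw an upward edge to whichever neighbor above equals $a_{ij}$ when $y_{ij}$ is forced to coincide with it, encircle $a_{ij}$ when $y_{ij}$ equals its assigned integer value but has no upward edge, and leave it bare otherwise. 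The definitions of the inequalities in Construction~\ref{def:cells}(3), in particular the split between $a_{ij}-1<y_{ij}$ and $y_{i-1,j}<y_{ij}$ governed by whether $a_{ij}-a_{i-1,j}\ge 2$, are precisely tailored so that the interval assigned to a bare vertex is a half-open interval of length at most $i-1$ whose integer points are exactly the admissible ``ceilings''; this is what makes the reconstruction of $a_{ij}$ from $y_{ij}$ unambiguous. I would then check that the decorations produced by this reading satisfy conditions~(\ref{cond:1})--(\ref{lastcondition}) of Definition~\ref{def:enhanced}, using the ``diamond'' propagation of equalities noted after Figure~\ref{fig:fig2} to verify consistency of the edges (condition~(\ref{cond:3})) and the forced-circle conditions~(\ref{cond:5})--(\ref{cond:7}).

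For \emph{existence}, the reading procedure applied to $y$ yields a pattern $P$, and one checks directly from the defining (in)equalities of $\widehat{C_P}$ that $y\in\widehat{C_P}$, and since $y\in GZ(\lambda)$ lies in the face of $GZ(\lambda)$ whose relative interior contains it, $y$ also lies in the relevant relative interior, so $y\in C_P$. For \emph{uniqueness}, I would argue the converse: if $y\in C_{P'}$ for some pattern $P'$, then the equalities and strict inequalities defining $\widehat{C_{P'}}$ force, coordinate by coordinate, exactly the local state that the reading procedure extracts from $y$ (a tight equality with a neighbor above corresponds to an edge, equality with an integer value to a bare circle, and a strict two-sided inequality to a bare vertex with a determined ceiling). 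Hence $P'$ must coincide with the pattern $P$ produced by the algorithm.

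The main obstacle, and where I would spend the most care, is the interaction between the \emph{local} reconstruction of each $a_{ij}$ and the \emph{global} constraint that $y$ genuinely lies in $GZ(\lambda)$, captured by the passage from $\widehat{C_P}$ to $C_P=\widehat{C_P}\cap(GZ(\lambda)\cap L)^0$. A bare vertex $y_{ij}$ is confined by Construction~\ref{def:cells}(3) to an interval whose endpoints may be the \emph{a priori} bounds $a_{ij}-1$ and $a_{ij}$, but the true admissible range inside $GZ(\lambda)$ can be narrower (as the $\min(4,y_{22})$ in Example~\ref{ex:patterncell} shows), so I must confirm that intersecting with $(GZ(\lambda)\cap L)^0$ does not merge or split cells and that every $y$ falls into exactly one cell after this intersection. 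I expect to handle this by showing that the strict inequalities never become equalities on $GZ(\lambda)$ except along lower-dimensional faces which are themselves unions of cells of smaller rank, so that the reading procedure assigns the correct reduced state there; this is the step whose bookkeeping is most delicate, and I would lean on Lemma~\ref{lem:52} (nonemptiness of the cells) to ensure the procedure is well-defined at every point.
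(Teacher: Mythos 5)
Your proof follows the same route as the paper's: a deterministic top-to-bottom scan of the coordinate tableau of $y$ that reads off, for each $y_{ij}$, whether it carries an upward edge, a circle, or is bare, together with its integer entry $a_{ij}$; uniqueness is then immediate because the (in)equalities of Construction~\ref{def:cells} for any cell containing $y$ force exactly these local states. The gap is in the one step where the reconstruction is not obvious, namely the assignment of $a_{ij}$ at a bare vertex. Your rule ``$a_{ij}=\lceil y_{ij}\rceil$ when $y_{ij}$ is non-integral, $a_{ij}=y_{ij}$ when it is an integer'' is incorrect whenever $y_{ij}<a_{i-1,j}+1$, which happens precisely when the entry above-left is itself bare, so that $a_{i-1,j}$ can exceed $y_{i-1,j}$. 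In that case the correct value is $a_{ij}=a_{i-1,j}+1$, and the entry is left bare even if $y_{ij}$ is an integer; the paper's algorithm treats this as a separate case. Your parenthetical claim that the integer points of the interval attached to a bare vertex ``are exactly the admissible ceilings'' is exactly what fails here: when the lower bound of that interval is $y_{i-1,j}$ rather than $a_{ij}-1$ (the case $a_{ij}-a_{i-1,j}\leq 1$ of Construction~\ref{def:cells}), the interval contains integers strictly smaller than $a_{ij}$.

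Concretely, in Example~\ref{ex:pointcell} one has $y_{12}=3.1$, hence $a_{12}=4$ bare, and then $y_{22}=3.8$ must receive $a_{22}=a_{12}+1=5$, not $\lceil 3.8\rceil=4$. If you set $a_{22}=4$, the resulting tableau contains the triangle $\begin{smallmatrix}4&&9\\&4\end{smallmatrix}$ with a bare bottom entry, which violates Definition~\ref{def:enhanced}\,(\ref{cond:5}); encircling it instead would force an edge to $a_{12}$, i.e.\ $y_{22}=y_{12}$, which is false. So under your rule the scan produces no legal enhanced pattern for such points, and existence breaks down. Once the assignment is corrected to $a_{ij}=\max\bigl(\lceil y_{ij}\rceil,\,a_{i-1,j}+1\bigr)$ for bare vertices (with the integer case $a_{i-1,j}+1\leq y_{ij}<a_{i-1,j+1}$ giving a circled entry), the rest of your argument, including the care taken over the intersection with $(GZ(\lambda)\cap L)^0$, is in line with the paper's proof.
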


\begin{proof} We describe an algorithm of constructing such a $P$ for any point $y\in GZ(\lambda)$. First take the table of coordinates of $y$ and draw the edges for all pairs of equal neighboring coordinates, no matter whether they are integer or not. Then we need to assign integer values $a_{ij}$ to all the vertices of $P$. This is done for the top vertex in each connected component, starting from the first row. Here we follow Construction~\ref{def:cells}.

We construct the enhanced pattern $P$ row by row, going from top to bottom. The first  row of the pattern is given by $\lambda$. Now suppose we have already filled the row number $i-1$; consider the $i$-th row. If an element $y_{ij}$ is equal to $y_{i-1,j}$ or $y_{i-1,j+1}$ (or both), this means that there is an edge going up from this position; then we let $a_{ij}$ be $a_{i-1,j}$ or $a_{i-1,j+1}$ correspondingly. We encircle this entry.

Otherwise, we distinguish between three cases. If $a_{i-1,j}+1\leq y_{ij}< a_{i-1,j+1}$ and $y_{ij}$ is integer, we set $a_{ij}=y_{ij}$ and encircle this element. If $y_{ij}<a_{i-1,j}+1$, we set $a_{ij}=a_{i-1,j}+1$ and do not encircle it. Finally, for $y_{ij}\notin \ZZ$ and $a_{i-1,j}+1<y_{ij}$, we set $a_{ij}=\lceil y_{ij}\rceil$, and the corresponding entry also has no circle. Construction~\ref{def:cells} implies that for the enhanced pattern $P$ obtained in such a way, the corresponding cell $C_P$ contains the point $y$.
\end{proof}

\begin{example}\label{ex:pointcell} To see how this algorithm works, consider the following tableau and the corresponding point $y\in GZ(1,3,7,9)\subset \RR^6$:
\[
\xymatrixrowsep{1pc}\xymatrixcolsep{1pc}
\scalebox{0.8}
{\xymatrix{
*+[o][F]{1} && *+[o][F]{3} && *+[o][F]{7} && *+[o][F]{9}\\
& 2.5 &&3.1 && 9 \\
&&2.5 && 3.8\\
&&&3.7}
}
\]
First join every pair of equal numbers by an edge and encircle the lower element in each pair. 
\[
\xymatrixrowsep{1pc}\xymatrixcolsep{1pc}
\scalebox{0.8}
{\xymatrix{
*+[o][F]{1} && *+[o][F]{3} && *+[o][F]{7} && *+[o][F]{9}\\
& 2.5 &&3.1 && *+[o][F]{9}\ar@{-}[ur] \\
&& *+[o][F]{2.5}\ar@{-}[ul] && 3.8\\
&&&3.7}
}
\]
Then replace all the entries in the second row by integers, following the algorithm from the proof of Lemma~\ref{lem:51}. We replace $2.5$ by $3$, and $3.1$ by $4$. Since $9$ is the lower end of an edge, its entry remains equal to the entry on the upper end of it. We obtain the following (note that this intermediate result is not an enhanced Gelfand--Zetlin pattern!).
\[
\xymatrixrowsep{1pc}\xymatrixcolsep{1pc}
\scalebox{0.8}
{\xymatrix{
*+[o][F]{1} && *+[o][F]{3} && *+[o][F]{7} && *+[o][F]{9}\\
& 3 && 4 && *+[o][F]{9}\ar@{-}[ur] \\
&& *+[o][F]{2.5}\ar@{-}[ul] && 3.8\\
&&&3.7}
}
\]
Now proceed with the third row. Since there is an edge going up from its first entry, we replace this entry by the integer on top of this edge, i.e. by $3$. With $y_{22}=3.8$, we have $y_{22}<a_{12}+1=5$, so we need to replace $3.8$ by $5$; we do not encircle it.
\[
\xymatrixrowsep{1pc}\xymatrixcolsep{1pc}
\scalebox{0.8}
{\xymatrix{
*+[o][F]{1} && *+[o][F]{3} && *+[o][F]{7} && *+[o][F]{9}\\
& 3 && 4 && *+[o][F]{9}\ar@{-}[ur] \\
&& *+[o][F]{3}\ar@{-}[ul] && 5\\
&&&3.7}
}
\] 
 The last stage is modifying $y_{31}=3.7$. We have $y_{31}\leq a_{21}+1=4$, so we set $a_{31}=4$ and obtain the desired answer: the enhanced pattern of the cell containing $y$ looks as follows.
 \[
\xymatrixrowsep{1pc}\xymatrixcolsep{1pc}
\scalebox{0.8}
{\xymatrix{
*+[o][F]{1} && *+[o][F]{3} && *+[o][F]{7} && *+[o][F]{9}\\
& 3 && 4 && *+[o][F]{9}\ar@{-}[ur] \\
&& *+[o][F]{3}\ar@{-}[ul] && 5\\
&&&4}
}
\]  
\end{example}

\begin{lemma}\label{lem:52} Every cell $C_P$ is nonempty and  homeomorphic to an open ball of dimension $\rk P$. 
\end{lemma}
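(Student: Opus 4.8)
The plan is to analyze $C_P$ inside the affine subspace $L=\operatorname{aff}(\widehat{C_P})$ and to establish three facts: that $C_P$ is relatively open and convex in $L$, that $\dim L=\rk P$, and that $C_P$ is nonempty. The homeomorphism claim then follows from the standard fact that a nonempty bounded relatively open convex subset of an affine space of dimension $k$ is homeomorphic to an open $k$-ball (e.g.\ via a radial homeomorphism from an interior point). Openness and convexity are immediate from Construction~\ref{def:cells}: inside $L$ the prescribed double inequalities cut out a relatively open convex set, while $(GZ(\lambda)\cap L)^0$ is relatively open and convex by definition, so their intersection $C_P$ is relatively open and convex in $L$; it is bounded because $C_P\subseteq GZ(\lambda)$.

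First I would compute $\dim L$. The equalities imposed in Construction~\ref{def:cells} — the edge equalities $y_{ij}=y_{i-1,j}$ or $y_{ij}=y_{i-1,j+1}$, together with the pinning equalities $y_{ij}=a_{ij}$ for encircled entries with no edge going up — force all coordinates belonging to a single connected component of $P$ to be equal. Hence $L$ is parametrized by one value per connected component. A component meeting the first row, or whose highest vertex is encircled, is pinned to a constant, so the remaining free parameters are in bijection with the components whose highest vertex carries no circle. By the lemma on connected components every non-encircled entry is the (unique) highest vertex of its component, so the number of free parameters equals the number of entries without circles, i.e.\ $\dim L=\rk P$.

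The nonemptiness of $C_P$ is the heart of the matter. The natural boundary point is the maximal integer point $\bar y=(a_{ij})$, which is a genuine Gelfand--Zetlin pattern and hence lies in $\overline{C_P}\subseteq GZ(\lambda)$. To produce an actual point of $C_P$ I would perturb $\bar y$ downward along the free coordinates by small, suitably chosen amounts (and set all remaining coordinates of each component equal to its free value), so that every strict double inequality of Construction~\ref{def:cells}(3) and every relative-interior inequality of $GZ(\lambda)\cap L$ becomes strict; concretely one can assign the free parameters in a suitable order (processing rows from top to bottom) so that when a free coordinate is treated, the coordinates occurring in its bounds are already fixed, and it is then placed strictly inside the prescribed open interval. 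The crucial point to verify is that each such interval has positive length — equivalently, that the whole system of strict inequalities is simultaneously satisfiable. I expect this consistency check to be the main obstacle: one must rule out the degenerate situations in which a lower bound of the form $y_{i-1,j}<y_{ij}$ and an upper bound of the form $y_{ij}<y_{i-1,j+1}$ collapse onto a common pinned value, and this is exactly where the defining conditions of enhanced patterns are used, in particular the diamond condition~(\ref{cond:3}) and conditions~(\ref{cond:5})--(\ref{lastcondition}), which control precisely how equal neighboring entries must be joined or encircled.

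Finally, having exhibited a point of $C_P$, I would conclude: $C_P$ is a nonempty, bounded, relatively open convex subset of the $\rk P$-dimensional affine space $L$, and is therefore homeomorphic to an open ball of dimension $\rk P$, as claimed.
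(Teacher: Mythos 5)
Your proposal follows essentially the same route as the paper: the endgame (a nonempty, bounded, convex set that is relatively open in its affine span $L$ is an open ball of dimension $\dim L$) is identical, the candidate boundary point $\bar y=(a_{ij})$ is the same, and your computation that $\dim L=\rk P$ via connected components is a welcome addition, since the paper merely asserts this. The one place where you stop short of a proof is exactly the step you yourself call ``the heart of the matter'': you reduce nonemptiness to the simultaneous satisfiability of all the strict inequalities (those of Construction~\ref{def:cells} together with the relative-interior inequalities of $GZ(\lambda)\cap L$) and then write that you \emph{expect} the enhanced-pattern axioms to rule out the degenerate collapses, without carrying out the check. The paper avoids attacking the full system head-on by a local argument: the point $\bar y$ lies in the closures of both $\widehat{C_P}$ and $(L\cap GZ(\lambda))^0$, and these two sets \emph{coincide in a neighborhood of $\bar y$}; hence any point of $\widehat{C_P}$ sufficiently close to $\bar y$ already lies in the intersection, and only the nonemptiness of $\widehat{C_P}$ itself (a much smaller system, handled by a perturbation of $\bar y$ of the kind you describe, with the perturbations growing down the rows) needs to be verified. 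Your explicit top-to-bottom perturbation can be made to work --- for instance, setting each free coordinate to $a_{ij}-\epsilon_i$ with $\epsilon_1<\epsilon_2<\cdots$ small handles the bounds $y_{i-1,j}<y_{ij}<y_{i-1,j+1}$ correctly --- but as written the consistency claim is an announced intention rather than an argument, so you should either carry out that verification or adopt the paper's local-coincidence shortcut.
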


\begin{proof} First, the set $\widehat{C_P}$ defined by inequalities in Construction~\ref{def:cells} is a convex set open in its affine span $L$. Replacing these strict inequalities by  non-strict ones, we obtain the closure of $\widehat{C_P}$. It contains the point with coordinates $y_{ij}=a_{ij}$, so this closure is nonempty, and $C_P$ is nonempty as well. Similarly, the relative interior $(L\cap GZ(\lambda))^0$ is convex and nonempty. So it remains to show that their intersection is nonempty.

Indeed, consider the point $y=(y_{ij})$ defined by $y_{ij}=a_{ij}$; it belongs to the closure of both $\widehat C_P$ and $(L\cap GZ(\lambda))^0$. Moreover, in a neighborhood of $y$ both these sets coincide. So $C_P$ also has dimension $\rk P$; being the intersection of two nonempty  convex bounded open sets, it is homeomorphic to an open ball.
\end{proof}

\begin{lemma}\label{lem:54} Let $y\in GZ(\lambda)$, and let $C_P$ be the cell containing it. If for some cell $C$ we have $y\in\overline{C}$, then $C_P\subset \overline{C}$. 
\end{lemma}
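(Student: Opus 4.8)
The plan is to prove the statement that defines the closure relation between cells: that if $y$ lies in the cell $C_P$ and also in the closure $\overline{C}$ of some cell $C$, then the entire cell $C_P$ is contained in $\overline{C}$. This is exactly the property needed to conclude that the collection $\{C_P\}$ forms a genuine CW-decomposition (the closure-finiteness and boundary-in-skeleton conditions). Since we have already established in Lemma~\ref{lem:51} that every point lies in a \emph{unique} cell, and in Lemma~\ref{lem:52} that each cell is an open ball, the remaining structural fact is precisely this one.

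First I would set up the combinatorial description of $\overline{C}$. If $C=C_Q$ for an enhanced pattern $Q$, then $\overline{C_Q}$ is obtained from the defining (in)equalities of $C_Q$ in Construction~\ref{def:cells} by replacing all strict inequalities with non-strict ones and intersecting with $GZ(\lambda)$. The key observation is that $\overline{C_Q}$ is a union of cells: a point $z\in\overline{C_Q}$ lies in some $C_{P'}$ by Lemma~\ref{lem:51}, and the patterns $P'$ that can arise are exactly the \emph{degenerations} of $Q$, i.e.\ patterns obtained from $Q$ by turning some strict inequalities into equalities (encircling more entries and/or collapsing some intervals to their endpoints, subject to the consistency conditions of Definition~\ref{def:enhanced}). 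So I would first characterize combinatorially which patterns $P'$ satisfy $C_{P'}\subset\overline{C_Q}$: these are precisely the patterns whose equality-set refines that of $Q$ in the appropriate sense (every edge of $Q$ is an edge of $P'$, and every coordinate fixed to an endpoint in $Q$ is at least as constrained in $P'$).

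The heart of the argument is then the following: the hypothesis $y\in C_P\cap\overline{C_Q}$ forces $P$ to be one of these degenerations of $Q$, and this is a property of $P$ alone, not of the particular point $y$. Concretely, I would run the reconstruction algorithm from the proof of Lemma~\ref{lem:51} at $y$: since $y\in\overline{C_Q}$, each coordinate $y_{ij}$ satisfies the non-strict version of $Q$'s defining relations, and the algorithm reads off exactly which neighboring coordinates coincide and which ones sit at an endpoint. Because $y\in\overline{C_Q}$, every equality imposed by $Q$ holds at $y$, so the algorithm recovers a pattern $P$ whose edge-set and circle-set contain those of $Q$ — i.e.\ $P$ is a degeneration of $Q$ — and this containment of combinatorial data does not depend on where inside $C_P$ the point $y$ lies. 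Hence the relation ``$P$ degenerates $Q$'' holds, and by the characterization of the previous paragraph this gives $C_P\subset\overline{C_Q}$ for \emph{every} point of $C_P$, which is the claim.

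The main obstacle I anticipate is the bookkeeping in the degeneration characterization, in particular verifying that the combinatorial data extracted from $y$ via the Lemma~\ref{lem:51} algorithm really is a \emph{valid} enhanced pattern that degenerates $Q$, rather than some intermediate object (the proof of Lemma~\ref{lem:51} already warns that intermediate tableaux need not be enhanced patterns). One must check that the consistency conditions (\ref{cond:3}), (\ref{cond:5})--(\ref{lastcondition}) of Definition~\ref{def:enhanced} are automatically satisfied by the reconstructed $P$, and that passing from $Q$ to $P$ only adds edges and circles in a way compatible with taking closures. I would handle this by arguing locally, triangle by triangle: for each small triangle I check that a non-strict degeneration of $Q$'s local constraints yields exactly $P$'s local constraints, so that the affine span $L_P$ of $\widehat{C_P}$ contains $y$ and sits inside the closure of the span of $\widehat{C_Q}$, whence $\widehat{C_P}\subset\overline{\widehat{C_Q}}$ and, after intersecting with $GZ(\lambda)$, $C_P\subset\overline{C_Q}$.
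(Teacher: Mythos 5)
Your proposal is correct and follows essentially the same route as the paper: the paper also runs a nondeterministic version of the Lemma~\ref{lem:51} algorithm at $y$ to produce all patterns $Q$ with $y\in\overline{C_Q}$, and concludes by observing that this collection (equivalently, your ``degeneration'' relation between $P$ and $Q$) depends only on the cell $C_P$ and not on the particular point $y$. The combinatorial bookkeeping you flag as the main obstacle is likewise left largely implicit in the paper's own argument.
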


\begin{proof} Given $y\in GZ(\lambda)$, let us find all enhanced patterns $Q$ such that $y\in \overline{C_Q}$. This will be done similarly to the proof of Lemma~\ref{lem:51}.

We construct all such patterns row by row from top to bottom. On each step, the procedure may not be unique. The first row is given by $\lambda$; this is the induction base.

Now suppose that the first $i-1$ rows are filled, and consider the coordinates $y_{ij}$ in the $i$-th row, starting from the first one. For a given coordinate $y_{ij}$,  we proceed exactly as in Lemma~\ref{lem:51}. Namely, if $y_{ij}<a_{i-1,j}+1$, we set $a_{ij}=a_{i-1,j}+1$; otherwise, if it is not an integer, we set $a_{ij}=\lceil y_{ij}\rceil$. In both of these cases, the corresponding entry has no circle. 

If $y_{ij}$ is an integer and $a_{i-1,j}+1\leq y_{ij}$, we can construct the corresponding entry of the pattern in at most three different ways. In the first case, we set $a_{ij}=y_{ij}$, put a circle around this vertex and join it with entries in the previous row, just like in Lemma~\ref{lem:51}. In the second and the third case, we set $a_{ij}=y_{ij}$ or $a_{ij}=y_{ij}+1$ and do not put a circle around this vertex, provided that the resulting diagram (or the constructed part of it) satisfies the conditions of Definition~\ref{def:enhanced}.

By construction, for all the patterns $Q$ obtained by this procedure the corresponding cell closure $\overline C_Q$ contains $y$, and this set includes $C_P$. Moreover, for every constructed $Q$ we have $\rk P\leq\rk Q$, with the equality only in the case $P=Q$. It is also clear that for all points $y\in C_P$ the set of such patterns $Q$ will be the same.



\end{proof}

This lemma immediately implies that the boundary $\overline C\setminus C$ of each cell consists of cells of smaller dimension. So $GZ(\lambda)=\bigsqcup_{P\in\cP(\lambda)} C_P$, is indeed a cellular decomposition. This concludes the proof of Theorem~\ref{thm:cellular}.

\subsection{Proof of Theorem~\ref{thm:main1}}\label{ssec:52}

In this section we establish a bijection between the set $\cP^+(\lambda)$ of efficient enhanced patterns and the multi-set of monomials in $\pi_{w_0}^{(\beta)}(x^\lambda)$.
%

Denote by $c_k$ the following Coxeter element $s_k\dots s_1\in S_k$; here $1\leq k\leq n-1$. The longest permutation $w_0$ can be presented as the product of such elements:
\begin{equation}\label{eq:w0}
w_0=s_1(s_2s_1)\dots(s_{n-1}\dots s_1)=c_1 c_2\dots c_{n-1}.
\end{equation}

\begin{definition}
	A polynomial $p(\beta,x_1,\dots,x_n)$ is said to be \emph{multiplicity free}, if all its nonzero coefficients are equal to 1.
\end{definition}

\begin{lemma}\label{lem:multfree} Let $\mu$ be a partition. For a monomial $x^\mu= x_1^{\mu_1}\dots x_n^{\mu_n}$, the polynomial $\pi_{c_k}^{(\beta)} x^\mu$ is multiplicity free.	
\end{lemma}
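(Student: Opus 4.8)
The plan is to prove the statement by explicitly computing the action of $\pi_{c_k}^{(\beta)}=\pi_k^{(\beta)}\pi_{k-1}^{(\beta)}\dots\pi_1^{(\beta)}$ on the monomial $x^\mu$, one factor at a time, and tracking the coefficients. The key structural fact I would exploit is the following formula for a single Demazure--Lascoux operator applied to a monomial: for $a\geq b$, a direct expansion of $\pi_i^{(\beta)}(x_i^ax_{i+1}^b)$ yields
\[
\pi_i^{(\beta)}(x_i^a x_{i+1}^b)=\sum_{j=0}^{a-b} x_i^{a-j}x_{i+1}^{b+j}+\beta\sum_{j=0}^{a-b-1} x_i^{a-j}x_{i+1}^{b+1+j},
\]
where the second (``$\beta$'') sum is empty when $a=b$. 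The crucial observation is that every monomial appearing here has coefficient exactly $1$ (in the appropriate power of $\beta$): the non-$\beta$ part runs over all weakly interpolating exponent pairs between $(a,b)$ and $(b,a)$, each occurring once, and the $\beta$-part runs over a disjoint set of exponent pairs, again each once. So a single $\pi_i^{(\beta)}$ applied to a single monomial is multiplicity free, and moreover the two groups of output monomials are distinguished by their degree in $\beta$ and by their exponent pattern.

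First I would set up an induction on $k$, or equivalently process the factors $s_1,s_2,\dots,s_k$ in order. Since $\mu$ is a partition, after applying $\pi_1^{(\beta)}$ the variable pair $(x_1,x_2)$ is ``sorted'', and each subsequent operator $\pi_i^{(\beta)}$ acts only on the pair $(x_i,x_{i+1})$. The main point to establish is that at each stage no two distinct monomials produced by the branching collapse onto the same monomial. Because $c_k$ is the specific Coxeter element $s_k\dots s_1$, the variables are touched in a strictly increasing cascade, and I would argue that the exponent of $x_{i+1}$ produced at stage $i$ is never subsequently altered by later operators (which only touch higher-indexed pairs). This ``freezing'' of lower coordinates is what prevents coefficient accumulation: a monomial is determined by the full history of choices made in the cascade, and distinct histories give distinct final exponent vectors together with distinct $\beta$-powers.

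The cleanest way to carry this out is to track the exponent vector as an element of $\mathbb{Z}^n$ together with its $\beta$-degree, and show that the map from the set of ``branch histories'' (i.e. choices of $j$ and of $\beta$-vs-non-$\beta$ at each of the $k$ stages) to pairs (exponent vector, $\beta$-degree) is injective. Injectivity follows from the freezing property: after stage $i$, the value of the exponent in position $i$ (the left slot of the pair just acted on) is fixed and is recoverable from the final monomial, and the $\beta$-degree contributed at stage $i$ is recoverable from comparing consecutive frozen coordinates. Reconstructing the history from the output monomial step by step gives the inverse map, proving injectivity and hence that all coefficients are $1$.

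The main obstacle I anticipate is verifying the freezing property rigorously and handling the degenerate cases where $\mu_i=\mu_{i+1}$ or where intermediate equalities of exponents arise: in these cases the $\beta$-sum is empty or shortened, and one must check that the injectivity argument does not break down when different branch histories might a priori produce coinciding exponent vectors. I expect this to require a careful case analysis of when consecutive exponents become equal, using the partition hypothesis on $\mu$ to control the initial configuration; the nonnegativity and the partition ordering are what guarantee that the single-operator formula above always applies with $a\geq b$ at every stage, keeping the computation within the multiplicity-free regime.
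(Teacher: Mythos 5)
Your overall strategy is the same as the paper's (induct along the word $s_k\cdots s_1$, expand one $\pi_i^{(\beta)}$ at a time, and argue that the output monomial determines the branch history), but the injectivity step at the heart of your argument fails. You claim that ``the $\beta$-degree contributed at stage $i$ is recoverable from comparing consecutive frozen coordinates.'' It is not. Writing $a_i$ for the frozen exponent of $x_i$, $b_i$ for the exponent of $x_i$ entering stage $i$ (so $b_1=\mu_1$), and $\epsilon_i\in\{0,1\}$ for the $\beta$-contribution of stage $i$, the single-step relation is $a_i+b_{i+1}=b_i+\mu_{i+1}+\epsilon_i$. Summing these telescopes to $\sum a_i+b_{k+1}=\sum\mu_i+\sum\epsilon_i$: only the \emph{total} $\beta$-degree is determined by the final exponent vector, not the individual $\epsilon_i$. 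Trading $(\epsilon_i,\epsilon_{i+1})=(0,1)$ for $(1,0)$ shifts the intermediate exponent $b_{i+1}$ by $1$ but can yield the identical final monomial, so two distinct branch histories collide. Concretely, for $\mu=(2,1,0)$ and $k=2$ one has $\pi_1^{(\beta)}(x_1^2x_2)=x_1^2x_2+x_1x_2^2+\beta x_1^2x_2^2$, and after applying $\pi_2^{(\beta)}$ the monomial $\beta x_1^2x_2x_3$ arises both from the $\beta$-branch of $\pi_2^{(\beta)}(x_1^2x_2)$ and from the non-$\beta$ branch of $\pi_2^{(\beta)}(\beta x_1^2x_2^2)$, so it appears with coefficient $2$.

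This is not a defect you could repair by a more careful case analysis: the statement itself fails under the reading $\pi_{c_k}^{(\beta)}=\pi_k^{(\beta)}\cdots\pi_1^{(\beta)}$ used in the proof of Lemma~\ref{lem:55} (and the opposite composition order fares no better on the same example). Indeed, the paper's own Figure~\ref{fig:s1s2s2s1} for $\mathscr{L}_{s_2s_1,(3,2,0)}=\pi_{c_2}^{(\beta)}(x_1^3x_2^2)$ labels two distinct edges with the same monomial $\beta x_1^3x_2^2x_3$ (and two with $\beta x_1^3x_2x_3^2$). For what it is worth, the paper's two-sentence proof of this lemma has exactly the same gap as your sketch: it observes that the monomials of $\pi_{c_2}^{(\beta)}x^\mu$ ``have different tridegrees over $(\beta,x_1,x_2)$,'' which only says distinct monomials are distinguishable by those degrees; it does not rule out one monomial being produced along two different tracks, which is precisely what happens. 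Your instinct to reduce everything to injectivity of the history-to-monomial map was the right way to test the claim --- carried out honestly, it shows the map is not injective, so the lemma must be weakened (e.g., to a statement about tracks rather than monomials) and its uses downstream reexamined.
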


\begin{proof}
	The proof is by induction on $k$. If $k=1$, the statement is obvious:
	\[
	\pi_{c_1}^{(\beta)}x^\mu=\pi_1^{(\beta)}x^\mu=\left( (x_1^{\mu_1}x_2^{\mu_2}+\dots+x_1^{\mu_2}x_2^{\mu_1})+\beta (x_1^{\mu_1}x_2^{\mu_2+1}+\dots+x_1^{\mu_2+1}x_2^{\mu_1})\right)\cdot x_3^{\mu_3}\dots x_n^{\mu_n}.
	\]
	Note that all monomials in the right-hand side have different bidegrees over $(\beta,x_1)$.
	
	Applying $\pi_2^{(\beta)}$ does not change the $x_1$-degree of a given monomial, only affecting $x_2$, $x_3$, and~$\beta$. This means that all monomials in $\pi_{c_2}^{(\beta)} x^\mu$ will have different tridegrees over $(\beta,x_1,x_2)$, and so on.
\end{proof}

We need one more definition concerning monomials.

\begin{definition}
	Let $\lambda = (\lambda_1, \lambda_2, \ldots, \lambda_n) $ be a partition, i.e. $\lambda_1 \geq \lambda_2 \geq \ldots \geq \lambda_n$. We shall say that monomial $x^\mu = x_1^{\mu_1} \ldots x_n^{\mu_n}$, where $\mu = (\mu_1, \mu_2, \ldots, \mu_n)$, is \emph{$\lambda$-alternating}, if $\lambda_1 \geq \mu_1 \geq \lambda_2 \geq \mu_2 \ldots \geq \lambda_{n-1} \geq \mu_{n-1} \geq \lambda_n$, and \emph{$\lambda$-nonalternating} otherwise. Denote the sum of all nonalternating monomials of a polynomial $p(x_1,\dots,x_n)$ by $[p]_\lambda$.
\end{definition}

\begin{lemma}\label{lem:55} Let $\lambda = (\lambda_1 \geq \ldots \geq \lambda_n)$ be a partition. Then we have
\[
[\pi_{c_{n-1}}^{(\beta)}x^\lambda]_\lambda\in\Ker \pi_{c_1\dots c_{n-2}}^{(\beta)}.
\]
\end{lemma}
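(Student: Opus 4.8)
The plan is to show that the non-alternating part $N:=[\pi_{c_{n-1}}^{(\beta)}x^\lambda]_\lambda$ can be written as a sum of polynomials, each annihilated by a \emph{single} operator $\pi_j^{(\beta)}$ with $1\le j\le n-2$, and then to invoke the fact that such kernels are swallowed by $\pi_{c_1\cdots c_{n-2}}^{(\beta)}$. Two preliminary observations drive this. First, since $\pi_j^{(\beta)}(f)=\partial_j\bigl(x_j(1+\beta x_{j+1})f\bigr)$ and $\partial_j g=0$ exactly when $g$ is symmetric in $x_j,x_{j+1}$, one gets that $f\in\Ker\pi_j^{(\beta)}$ iff $x_j(1+\beta x_{j+1})f$ is $s_j$-symmetric; equivalently, $\Ker\pi_j^{(\beta)}=x_{j+1}(1+\beta x_j)\cdot R_j$, where $R_j$ denotes the polynomials symmetric in $x_j,x_{j+1}$. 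Second, $c_1\cdots c_{n-2}$ is the longest element of the parabolic subgroup $S_{n-1}\subset S_n$ permuting $x_1,\dots,x_{n-1}$, so for each $j\le n-2$ it has a reduced word ending in $s_j$; the braid relations and idempotency of Proposition~\ref{prop:lascoux} then give $\pi_{c_1\cdots c_{n-2}}^{(\beta)}=Q\,\pi_j^{(\beta)}$ for a suitable $Q$, whence $\Ker\pi_j^{(\beta)}\subseteq\Ker\pi_{c_1\cdots c_{n-2}}^{(\beta)}$. As $\pi_{c_1\cdots c_{n-2}}^{(\beta)}$ is linear, it suffices to realize $N$ as such a sum.

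Next I would expand $\pi_{c_{n-1}}^{(\beta)}x^\lambda=\pi_{n-1}^{(\beta)}\cdots\pi_1^{(\beta)}(x^\lambda)$ one operator at a time, applying $\pi_1^{(\beta)}$ first. Applying $\pi_k^{(\beta)}$ fixes the exponent $\mu_k$ of $x_k$ (with $\lambda_{k+1}\le\mu_k\le\nu_k$, where $\nu_k$ is the current exponent of $x_k$, and $x_k$ still carries $\lambda_k$ before this step) and rewrites the exponent of $x_{k+1}$, each step optionally raising the total degree by one with a factor $\beta$. Tracking the bounds shows that $\mu_1\in[\lambda_2,\lambda_1]$ and $\mu_k\ge\lambda_{k+1}$ always hold, so a monomial $x^\mu$ is $\lambda$-nonalternating precisely when $\mu_k>\lambda_k$ for some $k\in\{2,\dots,n-1\}$. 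I would then group the non-alternating monomials by the smallest index $k$ with $\mu_k>\lambda_k$ (the \emph{first violation}), writing $N=\sum_{k=2}^{n-1}N_k$, and claim that $N_k\in\Ker\pi_{k-1}^{(\beta)}$.

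To prove this claim I would observe that $\pi_{k-1}^{(\beta)}$ involves only $x_{k-1},x_k$, that these two variables receive their final exponents $\mu_{k-1},\mu_k$ at steps $k-1$ and $k$ and are untouched by every later operator, and that the input $\nu_{k-1}$ to step $k-1$ and the output $\nu_{k+1}$ of step $k$ are fixed once all other steps are fixed. Freezing all remaining exponents (the resulting spectator monomial is symmetric in $x_{k-1},x_k$) reduces the claim to a two-variable statement: with $u=x_{k-1}$, $v=x_k$, one must show that $\sum_{a,b}\beta^{a+b-K}\,m(a,b)\,u^av^b$ lies in $v(1+\beta u)\cdot R_{k-1}$, where $K=\nu_{k-1}+\lambda_k+\lambda_{k+1}-\nu_{k+1}$ and $m(a,b)$ counts the admissible pairs $(\varepsilon_{k-1},\varepsilon_k)\in\{0,1\}^2$ of $\beta$-markers at steps $k-1,k$ that yield $(\mu_{k-1},\mu_k)=(a,b)$ subject to $a\le\lambda_{k-1}$ and $b>\lambda_k$. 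By the kernel description of the first paragraph this is equivalent to the symmetry identity $m(a,b)+m(a,b+1)=m(b,a)+m(b,a+1)$ for all $a,b$. Establishing this identity is the crux, and the main obstacle: the relation $a+b=K+\varepsilon_{k-1}+\varepsilon_k$ forces $\varepsilon_{k-1}+\varepsilon_k=a+b-K\in\{0,1,2\}$, while the admissibility of each marker becomes the inequalities $a>\lambda_k$ (for $\varepsilon_{k-1}=1$) and $b>\lambda_{k+1}$ (for $\varepsilon_k=1$) together with the range bounds coming from $\nu_{k-1}$ and $\nu_{k+1}$; a finite case analysis on the value of $a+b-K$ then yields the identity, the delicate point being the correct $\beta$-bookkeeping of the two interacting operators $\pi_{k-1}^{(\beta)}$ and $\pi_k^{(\beta)}$ (note that the multiplicities $m(a,b)$ may exceed $1$, so genuine cancellation inside the kernel is at work). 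Combining the three steps gives $N_k\in\Ker\pi_{c_1\cdots c_{n-2}}^{(\beta)}$ for each $k$, and summing over $k$ proves the lemma.
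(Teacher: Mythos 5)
Your setup is sound as far as it goes: the characterization $\Ker\pi_j^{(\beta)}=x_{j+1}(1+\beta x_j)\cdot R_j$ is correct, and since $c_1\cdots c_{n-2}$ is the longest element of $\langle s_1,\dots,s_{n-2}\rangle$ it has reduced words ending in any $s_j$ with $j\le n-2$, so $\sum_{j\le n-2}\Ker\pi_j^{(\beta)}\subseteq\Ker\pi_{c_1\cdots c_{n-2}}^{(\beta)}$. Your claim for $k=2$ is also true: the sum $N_2$ of all monomials with $\mu_2>\lambda_2$ is $\pi_{n-1}^{(\beta)}\cdots\pi_3^{(\beta)}$ applied to a sum of terms $\pi_1^{(\beta)}(m)-m$, and since these operators commute with $\pi_1^{(\beta)}$ and $\pi_1^{(\beta)}$ is idempotent, $N_2\in\Ker\pi_1^{(\beta)}$; this is essentially how the paper disposes of its piece $r_2$.

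The central claim $N_k\in\Ker\pi_{k-1}^{(\beta)}$ is, however, false for $k\ge 3$, so the ``first violation'' decomposition does not work for $n\ge 4$. Concretely, take $n=4$ and $\lambda=(1,0,0,0)$. Computing $\pi_3^{(\beta)}\pi_2^{(\beta)}\pi_1^{(\beta)}(x_1)$ and extracting the monomials with $\mu_2=0$ and $\mu_3>0$ gives $N_3=x_3+\beta x_1x_3+\beta x_3x_4+\beta^2x_1x_3x_4=x_3(1+\beta x_1)(1+\beta x_4)$, and since $(1+\beta x_1)(1+\beta x_4)$ is $s_2$-symmetric, $\pi_2^{(\beta)}(N_3)=(1+\beta x_1)(1+\beta x_4)\,\pi_2^{(\beta)}(x_3)=-\beta x_2x_3(1+\beta x_1)(1+\beta x_4)\neq 0$. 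In your own two-variable reduction this is the failure of the identity $m(a,b)+m(a,b+1)=m(b,a)+m(b,a+1)$: here the only nonzero multiplicity is $m(0,1)=1$, so $m(0,1)+m(0,2)=1\neq 0=m(1,0)+m(1,1)$. The underlying reason is that an excess $\mu_k>\lambda_k$ with $k\ge 3$ need not be ``paid for'' by a drop in $\mu_{k-1}$ at all: it can be created entirely by the $\beta$-markers of the earlier operators $\pi_1^{(\beta)},\dots,\pi_{k-1}^{(\beta)}$, so the group $N_k$ lacks the $s_{k-1}$-symmetry that membership in $\Ker\pi_{k-1}^{(\beta)}$ requires. (One still has $N_3\in\Ker\pi_1^{(\beta)}+\Ker\pi_2^{(\beta)}$ via $N_3=(N_3-\pi_1^{(\beta)}N_3)+\pi_1^{(\beta)}N_3$, but that is a different decomposition from yours and is not produced by your argument.) The paper sidesteps this exact difficulty: only the position-$2$ violations are killed by a single operator, while all violations at positions $k\ge 3$ are handled by induction on $n$, applying the operator of the full longest element of $\langle s_2,\dots,s_{n-2}\rangle$ rather than a single $\pi_{k-1}^{(\beta)}$. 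Some such inductive or multi-operator step is needed to repair your proof.
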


Before giving the general proof of this lemma, we consider an example for $n=3$.

\begin{example}\label{ex:51} Let $n = 3$. Take a partition $\lambda=(a,b,0)$ and apply $\pi_{s_2s_1}^{(\beta)}$  to $x^\lambda=x_1^ax_2^b$. The resulting monomials are shown in Figure~\ref{fig:altnonalt}, with all $\lambda$-alternating and nonalternating monomials  located in the blue and green area, respectively. In this figure, we have the case $\lambda=(4,2,0)$.

Now take the rightmost column in the green triangle. The sum of its monomials is equal to
\begin{multline*}
\beta x_1^ax_2^{b+1}+x_1^{a-1}x_2^{b+1}+\dots+\beta x_1^{b+1}x_2^{a}+x_1^bx_2^a=\\(x_1^ax_2^b+\beta x_1^ax_2^{b+1}+x_1^{a-1}x_2^{b+1}+\dots+\beta x_1^{b+1}x_2^{a}+x_1^bx_2^a)-x_1^ax_2^b=\\\pi_1^{(\beta)}(x_1^ax_2^b)-x_1^ax_2^b.
\end{multline*}
Since $\pi_1^{(\beta)}$ is idempotent, we have $\pi_1^{(\beta)}(\pi_1^{(\beta)}(x_1^ax_2^b)=\pi_1^{(\beta)}(x_1^ax_2^b)$, so $\pi_1^{(\beta)}$ applied to LHS of the equality above is zero. 

The same holds for every other column of the green triangle: the sum of nonalternating monomials in  it is equal either  to $\pi_1^{(\beta)}(x_1^{a-i}x_2^bx_3^i)-x_1^{a-i}x_2^bx_3^i$, for $i=0,\dots,a-b$, or to $\pi_1^{(\beta)}(x_1^{a-i}x_2^bx_3^{i+1})-x_1^{a-i}x_2^bx_3^{i+1}$, for $i=0,\dots,1,a-b-1$. The monomials of the first kind correspond to vertices on the blue diagonal, while the monomials of the second kind correspond to edges between them. So the sum of all the nonalternating monomials in $\pi_{s_2s_1}^{(\beta)}x^\lambda$ belongs to the kernel of $\pi_1^{(\beta)}$.

\begin{figure}[h!]
\begin{tikzpicture}[scale=1.7]
	\filldraw[blue!5] [rounded corners=10pt] (-0.4,-0.2)--(4.2,-0.2)--(8.4,4.2)--(3.8,4.2)--cycle;
	\filldraw[teal!10!white] [rounded corners=10pt] (4.3,-0.2)--(8.3,-0.2)--(8.3,4)--cycle;
	\draw (0,0)--(8,0)--(8,4)--(4,4)--cycle;
	\draw (2,0)--(6,4);
	\draw [NavyBlue, very thick](4,0)--(8,4);
	\draw (2,2)--(8,2);
	\draw (6,0)--(6,2);
	\filldraw (8,4) node [NavyBlue, above right] {$x_1^4x_2^2$} circle (2pt); 
	\filldraw (8,2) node [right] {$x_1^3x_2^3$} circle (2pt); 	
	\filldraw (8,0) node [below right] {$x_1^2x_2^4$} circle (2pt); 
	\filldraw (6,0) node [below] {$x_1^2x_2^3x_3$} circle (2pt); 
	\filldraw (4,0) node [NavyBlue, below] {$x_1^2x_2^2x_3^2$} circle (2pt); 
	\filldraw (2,0) node [below] {$x_1^2x_2x_3^3$} circle (2pt); 
	\filldraw (0,0) node [below left] {$x_1^2x_3^4$} circle (2pt); 

	\filldraw (6,2) node [NavyBlue, above left] {$x_1^3x_2^2x_3$} circle (2pt); 
	\filldraw (4,2) node [above left] {$x_1^3x_2x_3^2$} circle (2pt); 
	\filldraw (2,2) node [left] {$x_1^3x_3^3$} circle (2pt); 
	\filldraw (6,4) node [above] {$x_1^4x_2x_3$} circle (2pt); 
	\filldraw (4,4) node [above left] {$x_1^4x_3^2$} circle (2pt); 

	\draw (7.3,3.3) node [NavyBlue, above left,rotate=45] {$\beta x_1^4x_2^2x_3$};
	\draw (5.3,1.3) node [NavyBlue, above left,rotate=45] {$\beta x_1^3x_2^2x_3^2$};
	
	\draw (5.3,3.3) node [above left,rotate=45] {$\beta x_1^4x_2x_3^2$};
	\draw (3.3,1.3) node [above left,rotate=45] {$\beta x_1^3x_2x_3^3$};
	\draw (3.3,3.3) node [above left,rotate=45] {$\beta x_1^4x_3^3$};
	\draw (1.3,1.3) node [above left,rotate=45] {$\beta x_1^3x_3^4$};
	\draw (8,3) node [below,rotate=90] {$\beta x_1^4x_2^3$};
	\draw (8,1) node [below,rotate=90] {$\beta x_1^3x_2^4$};
	\draw (6,1) node [below,rotate=90] {$\beta x_1^3x_2^3x_3$};
	\draw (7,0) node [above] {$\beta x_1^2x_2^4x_3$};
	\draw (5,0) node [above] {$\beta x_1^2x_2^3x_3^2$};
	\draw (3,0) node [above] {$\beta x_1^2x_2^2x_3^3$};
	\draw (1,0) node [above] {$\beta x_1^2x_2x_3^4$};
	\draw (7,2) node [below] {$\beta x_1^3x_2^3x_3$};
	\draw (5,2) node [below] {$\beta x_1^3x_2^2x_3^2$};
	\draw (3,2) node [below] {$\beta x_1^3x_2x_3^3$};
	\draw (7,4) node [below] {$\beta x_1^4x_2^2x_3$};
	\draw (5,4) node [below] {$\beta x_1^4x_2x_3^2$};

	\draw (2,1) node [rotate=26] {$\beta^2 x_1^3x_2x_3^4$};
	\draw (4,1) node [rotate=26] {$\beta^2 x_1^3x_2^2x_3^3$};
	\draw (4,3) node [rotate=26] {$\beta^2 x_1^4x_2x_3^3$};
	\draw (6,3) node [rotate=26] {$\beta^2 x_1^4x_2^2x_3^2$};
	
	\draw (7.4,2.6) node [rotate=45] {$\beta^2 x_1^4x_2^3x_3$};

	\draw (7,1) node [rotate=45] {$\beta^2 x_1^3x_2^4x_3$};
	\draw (5.5,0.9) node [rotate=45] {$\beta^2 x_1^3x_2^3x_3^2$};
\end{tikzpicture}
\caption{Alternating and nonalternating monomials}\label{fig:altnonalt}	
\end{figure}
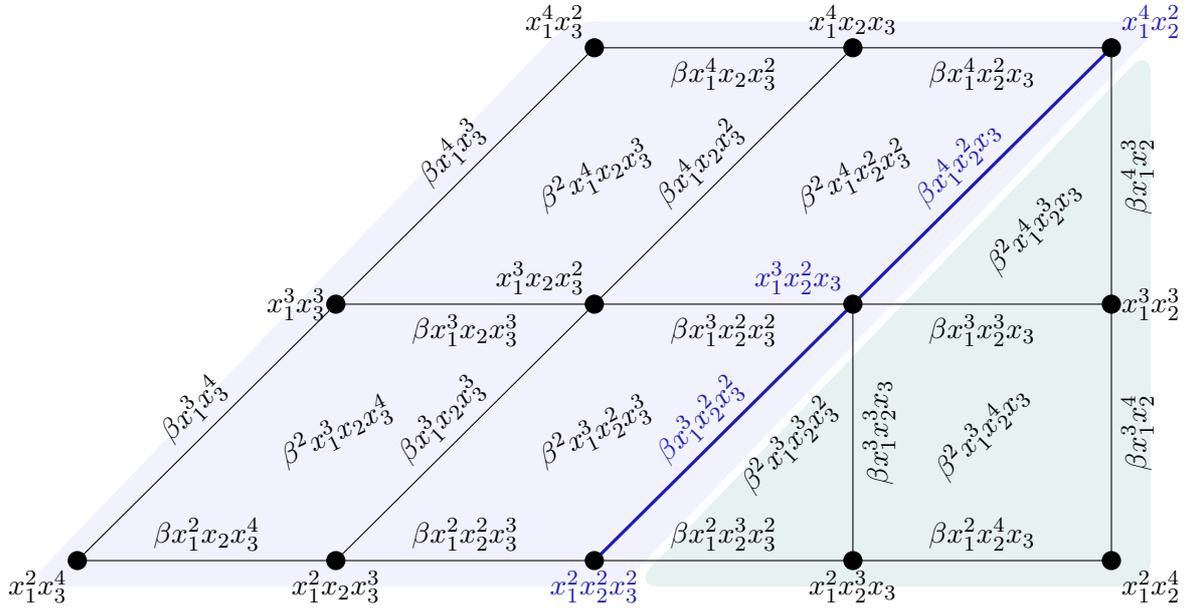

Moreover, the sum of the nonalternating monomials in every column is equal to  $\pi_{1}^{(\beta)}(x_1^{a - c} x_2^bx_3^c) - x_1^{a - c} x_2^bx_3^c$ or $\pi_{1}^{(\beta)}(\beta x_1^{a - c} x_2^bx_3^{c+1}) - \beta x_1^{a - c} x_2^bx_3^{c+1}$ respectively. Recall that the monomials $x_1^{a-c}x_2^bx_3^c$ correspond to vertices on the diagonal, while the monomials $\beta x_1^{a-c}x_2^bx_3^{c+1}$ correspond to edges between them (cf.~Sec.~\ref{ssec:ex-general}).  This means that $\pi_{1}^{(\beta)}([\pi_{s_2s_1}^{(\beta)}x^\lambda]_\lambda)=0$: the sum of all nonalternating monomials  belongs to the kernel of $\pi_1^{(\beta)}$. 
\end{example}

\begin{proof}[Proof of Lemma~\ref{lem:55}] The proof is by induction on $n$.
For $n = 1$ or $2$, there is nothing to prove. The case $n = 3$ was considered in Example~\ref{ex:51}.

The general case is treated similarly to the case $n=3$.
Consider a monomial $x^\lambda$ and apply to it $\pi_2^{(\beta)}\pi_1^{(\beta)}$. This is the sum of monomials $x_1^{\nu_1} x_2^{\nu_2} x_3^{\nu_3} x_4^{\lambda_4}\dots x_n^{\lambda_n}$, where $\nu_3\geq\lambda_4\geq\dots$ is a partition. Denote by $A$ the sum of all monomials satisfying the condition $\lambda_2\geq \nu_2\geq \lambda_3$. (They correspond to the blue area in Figure~\ref{fig:altnonalt}). In Example~\ref{ex:51}, these were alternating monomials; here these are monomials that satisfy the alternating condition for $\nu_1$ and $\nu_2$.
The remaining monomials (those in the green area) can be represented as
\[
\sum_{j=0}^{\lambda_1-\lambda_2}\left( \pi_1^{(\beta)}(x_1^{\lambda_1-j}x_2^{\lambda_2}x_3^{\lambda_3+j}x_4^{\lambda_4}\dots)- x_1^{\lambda_1-j}x_2^{\lambda_2}x_3^{\lambda_3+j}x_4^{\lambda_4}\dots\right),
\]
where monomials $x_1^{\lambda_1-j}x_2^{\lambda_2}x_3^{\lambda_3+j}x_4^{\lambda_4}\dots$ are located on the blue diagonal (see Figure~\ref{fig:altnonalt}). This implies that the sums $\left( \pi_1^{(\beta)}(x_1^{\lambda_1-j}x_2^{\lambda_2}x_3^{\lambda_3+j}x_4^{\lambda_4}\dots)- x_1^{\lambda_1-j}x_2^{\lambda_2}x_3^{\lambda_3+j}x_4^{\lambda_4}\dots\right)$ are located in the corresponding columns in the green area. The sum of all such monomials will be denoted by $B$. Then $\pi_2^{(\beta)}\pi_1^{(\beta)}(x_\lambda) = A + B$.

Now denote $\pi_{n-1}^{(\beta)}\dots \pi_3^{(\beta)}$ by $\pi$. We have
\[
\pi_{c_{n-1}}^{(\beta)}x_\lambda=(\pi_{n-1}^{(\beta)}\dots \pi_3^{(\beta)})(\pi_2^{(\beta)}\pi_1^{(\beta)}(x_\lambda)) = \pi (A + B).
\]
 Since $\pi$ is a linear operator, we have
 \[
 \pi (A + B) = \pi (A) + \pi (B).
 \]
Monomials in $x_3, x_4, \ldots$ are symmetric in $x_1$ and $x_2$, hence
\begin{multline*}
 \pi_1^{(\beta)}(x_1^{\lambda_1-j}x_2^{\lambda_2}x_3^{\lambda_3+j}x_4^{\lambda_4}\dots)- x_1^{\lambda_1-j}x_2^{\lambda_2}x_3^{\lambda_3+j}x_4^{\lambda_4}\dots = \\
 =(x_3^{\lambda_3+j}x_4^{\lambda_4}\dots )\cdot \pi_1^{(\beta)}(x_1^{\lambda_1-j}x_2^{\lambda_2})
  -x_1^{\lambda_1-j}x_2^{\lambda_2}x_3^{\lambda_3+j}x_4^{\lambda_4}\dots = \\
  =(x_3^{\lambda_3+j}x_4^{\lambda_4}\dots) (\pi_1^{(\beta)} (x_1^{\lambda_1-j}x_2^{\lambda_2}) - x_1^{\lambda_1-j}x_2^{\lambda_2}).
\end{multline*}
Since $(\pi_1^{(\beta)} (x_1^{\lambda_1-j}x_2^{\lambda_2}) - x_1^{\lambda_1-j}x_2^{\lambda_2})$  is symmetric in $x_3, x_4, \ldots$, we have:
\[
\pi \left( (x_3^{\lambda_3+j}x_4^{\lambda_4}\dots) (\pi_1^{(\beta)} (x_1^{\lambda_1-j}x_2^{\lambda_2}) - x_1^{\lambda_1-j}x_2^{\lambda_2})\right) = \pi (x_3^{\lambda_3+j}x_4^{\lambda_4}\dots) \cdot (\pi_1^{(\beta)} (x_1^{\lambda_1-j}x_2^{\lambda_2}) - x_1^{\lambda_1-j}x_2^{\lambda_2}).
\]
Let us introduce some extra notations. Denote by $B_0^{j}$  and $B_1^{j}$ the sum of all alternating and nonalternating monomials in $\pi (x_3^{\lambda_3+j}x_4^{\lambda_4}\dots)$, respectively. Then $\pi (x_3^{\lambda_3+j}x_4^{\lambda_4}\dots) = B_0^{j} + B_1^{j}$. Note that $B_0^{j}$ and $ B_1^{j}$ are elements of $\mathbb{Z}[x_3, x_4, \ldots]$. Also we denote by $A_0$ and $A_1$  the sums of all alternating and nonalternating monomials in $\pi (A)$, respectively.
Then we have:
\[
\pi_{c_{n-1}}^{(\beta)}x_\lambda= \pi(A) + \pi (B) = A_0 + A_1 + \sum_{j=0}^{\lambda_1-\lambda_2}\left( (B_0^{j} + B_1^{j}) (\pi_1^{(\beta)} (x_1^{\lambda_1-j}x_2^{\lambda_2}) - x_1^{\lambda_1-j}x_2^{\lambda_2}) \right)
\]

To prove this lemma for an arbitrary $n$, we observe that $c_1\dots c_{n-2}=w_0'$ is the longest permutation for the subgroup $\langle s_1,\dots,s_{n-2}\rangle \cong S_{n-1}\subset S_n$ and fix another word for  $w_0'$. We denote by $w_0'' $ the longest permutation in the subgroup $\langle s_2,\dots, s_{n-2}\rangle\cong S_{n-2} \hookrightarrow S_{n-1}\hookrightarrow S_n$. Then we have
\begin{equation}
w'_0 = (s_{n-2} \ldots s_2 s_1) \cdot w_0''=s_1(s_2s_1)\dots(s_{n-2}\dots s_1).\label{eq:w0prime}	
\end{equation}

Consider the polynomial $\pi_{c_{n-1}}^{(\beta)}x^\lambda$. According to Lemma~\ref{lem:multfree}, it is multiplicity free. Let $x^\mu$ be a $\lambda$-nonalternating monomial occurring in it. By construction, we have $\lambda_1\geq \mu_1$ and $\mu_i\geq \lambda_{i+1}$ for each $1\leq i\leq n-1$. We distinguish between the two cases:

(1) There exists a $k$ such that $\mu_k> \lambda_k$, with $k\geq 3$. Denote the sum of all such nonalternating monomials from $[\pi_{c_{n-1}}^{(\beta)}x^\lambda]_\lambda$ by $r_1(\beta, x_1,\dots,x_n)$. With the previous notation:
\[
r_1(\beta, x_1,\dots,x_n) = A_1 + \sum_{j=0}^{\lambda_1-\lambda_2}\left(B_1^{j} \cdot (\pi_1^{(\beta)} (x_1^{\lambda_1-j}x_2^{\lambda_2}) - x_1^{\lambda_1-j}x_2^{\lambda_2}) \right)
\]

(2) We have $\mu_2>\lambda_2$ and $\lambda_i\geq \mu_i$ for each $i\geq 3$. The sum of all such nonalternating monomials from $[\pi_{c_{n-1}}^{(\beta)}x^\lambda]_\lambda$ will be denoted by $r_2(\beta,x_1,\dots,x_n)$. With the previous notation:

\[
r_2(\beta,x_1,\dots,x_n) = \sum_{j=0}^{\lambda_1-\lambda_2}\left(B_0^{j} \cdot (\pi_1^{(\beta)} (x_1^{\lambda_1-j}x_2^{\lambda_2}) - x_1^{\lambda_1-j}x_2^{\lambda_2}) \right)
\]

Now we shall check that $\pi_{w_0'}^{(\beta)}$ applied to each of $r_1$ and $r_2$ equals zero. For this, we take different words for $w_0'$. For the first one, take 
\[
w_0'=(s_{n-2} \ldots s_2 s_1) \cdot w_0'',
\]
where $w_0''=  s_2  (s_3s_2)  \ldots (s_{n-1}s_{n-2} \ldots s_2)$ is the longest permutation in the subgroup $\langle s_2,\dots, s_{n-2}\rangle\cong S_{n-2} \hookrightarrow S_{n-1}$. Then $\pi_{w_0''}^{(\beta)} r_1=0$ by the induction hypothesis, and so is $\pi_{w_0'}^{(\beta)}$.

In the second case, we claim that $\pi_1^{(\beta)}$ annihilates $r_2$, so does $\pi_{w_0'}^{(\beta)}$. Indeed, since the word~(\ref{eq:w0prime}) ends with $s_1$, we have:
\[
\pi_1^{(\beta)}(r_2) =  \pi_1^{(\beta)} \left( \sum_{j=0}^{\lambda_1-\lambda_2}\left(B_0^{j} \cdot (\pi_1^{(\beta)} (x_1^{\lambda_1-j}x_2^{\lambda_2}) - x_1^{\lambda_1-j}x_2^{\lambda_2}) \right) \right) = 
\]

\[
= \sum_{j=0}^{\lambda_1-\lambda_2} \pi_1^{(\beta)} \left( B_0^{j} \cdot (\pi_1^{(\beta)} (x_1^{\lambda_1-j}x_2^{\lambda_2}) - x_1^{\lambda_1-j}x_2^{\lambda_2}) \right) = \sum_{j=0}^{\lambda_1-\lambda_2} B_0^{j} \cdot  \pi_1^{(\beta)}  ( (\pi_1^{(\beta)} (x_1^{\lambda_1-j}x_2^{\lambda_2}) - x_1^{\lambda_1-j}x_2^{\lambda_2}) )
\]
Recall that $\pi_i^{(\beta)} ( \pi_i^{(\beta)} f - f) = 0$. It follows that:
\[
\pi_1^{(\beta)}(r_2) = \sum_{j=0}^{\lambda_1-\lambda_2} B_0^{j} \cdot  \pi_1^{(\beta)}  ( (\pi_1^{(\beta)} (x_1^{\lambda_1-j}x_2^{\lambda_2}) - x_1^{\lambda_1-j}x_2^{\lambda_2}) ) = \sum_{j=0}^{\lambda_1-\lambda_2} B_0^{j} \cdot 0 = 0.
\]

\end{proof}

Now let us act on $x^\lambda$ by $\pi^{(\beta)}_{w_0}$, where we take the word for $w_0$ given by~(\ref{eq:w0}). Consider the following sequences of monomials:
\begin{eqnarray*}
	x^{\lambda}=x^{\lambda^{(1,0)}}\xrightarrow{\pi_{1}^{(\beta)}} x^{\lambda^{(1,1)}}\xrightarrow{\pi_{2}^{(\beta)}} x^{\lambda^{(1,2)}}\xrightarrow{\pi_{3}^{(\beta)}} \dots \xrightarrow{\pi_{n-1}^{(\beta)}} x^{\lambda^{(1,n-1)}}= x^{\lambda^{(2,0)}}; \\
x^{\lambda^{(2,0)}}\xrightarrow{\pi_{1}^{(\beta)}} x^{\lambda^{(2,1)}}\xrightarrow{\pi_{2}^{(\beta)}} x^{\lambda^{(2,2)}}\xrightarrow{\pi_{3}^{(\beta)}} \dots \xrightarrow{\pi_{n-2}^{(\beta)}} x^{\lambda^{(2,n-2)}}=x^{\lambda^{(3,0)}}; \\
x^{\lambda^{(3,0)}}\xrightarrow{\pi_{1}^{(\beta)}} x^{\lambda^{(3,1)}}\xrightarrow{\pi_{2}^{(\beta)}} x^{\lambda^{(3,2)}}\xrightarrow{\pi_{3}^{(\beta)}} \dots \xrightarrow{\pi_{n-3}^{(\beta)}} x^{\lambda^{(3,n-3)}}\xrightarrow{\pi_{1}^{(\beta)}}x^{\lambda^{(4,0)}};\\
\dots\\
x^{\lambda^{(n-1,0)}} \dots \xrightarrow{\pi_1^{(\beta)}} x^{\lambda^{(n-1,1)}}\xrightarrow{\pi_{2}^{(\beta)}} x^{\lambda^{(n-1,2)}}=x^{(n,0)};\\
x^{\lambda^{(n,0)}}\xrightarrow{\pi_1^{(\beta)}} x^{\lambda^{(n,1)}}=x^\nu.
\end{eqnarray*}
Here each $x^{\lambda^{(i,j)}}$ occurs as a summand in $\pi_j^{\beta} x^{\lambda^{(i,j-1)}}$. Such a sequence is called a \emph{track} of $x^{\lambda}$. 

For each track with a nonzero $x^\nu$, we construct an efficient enhanced pattern $P$ as follows.  First, note that by Lemma~\ref{lem:55}, each $\lambda^{(i,0)}$ is ${\lambda^{(i-1,0)}}$-alternating; in particular, it is a partition. Take a pattern $P$ with entries $p_{ij}$ such that its $i$-th row contains the first $n-i$ parts of $\lambda^{(i-1,0)}$, written in the decreasing order (the initial row contains the parts of $\lambda$ and has number $0$), that is, $p_{ij}=(\lambda^{(i-1,0)})_{n-i-j+1}$. Next, draw a circle around a vertex if the degree of $\beta$ in front of the corresponding monomials $x^{\lambda^{(i,j-1)}}$ and $x^{\lambda^{(i,j)}}$ is equal.

This gives us the location of encircled vertices. According to Lemma~\ref{lem:redundant}, this defines an efficient enhanced pattern. It is clear that all the efficient patterns can be obtained in such a way, and different patterns correspond to different tracks. Theorem~\ref{thm:main1} is proved.





\begin{example}\label{ex:52} Let $\lambda = (2, 1, 0)$. Below is a track of monomial $\beta^2x_1^2x_2x_3^2$ and the corresponding enhanced Gelfand--Zetlin pattern, being filled row by row from the right to the left, from top to bottom.
\[
x_1^2x_2\xrightarrow{\pi_1^{\beta}} \beta x_1^2x_2^2\xrightarrow{\pi_2^{\beta}} \beta^2x_1^2x_2x_3^2
\xrightarrow{\pi_1^{\beta}} \beta^2 x_1^2x_2x_3^2.
\]	

\[
\xymatrixrowsep{1pc}\xymatrixcolsep{1pc}
\scalebox{0.7}
{\xymatrix{
*+[o][F]{0} && *+[o][F]{1} && *+[o][F]{2}\\
& \bullet && \bullet \\
&&\bullet}
}\qquad\qquad
\scalebox{0.7}
{\xymatrix{
*+[o][F]{0} && *+[o][F]{1} && *+[o][F]{2}\\
& \bullet && 2 \\
&&\bullet}
}\qquad\qquad
\scalebox{0.7}
{\xymatrix{
*+[o][F]{0} && *+[o][F]{1} && *+[o][F]{2}\\
& 1 && 2 \\
&&\bullet}
}\qquad\qquad
\scalebox{0.7}
{\xymatrix{
*+[o][F]{0} && *+[o][F]{1} && *+[o][F]{2}\\
& 1 && 2 \\
&&*+[o][F]{2}\ar@{-}[ur]}
}
\]
\end{example}

\section{Proof for the general case}\label{sec:proofgen}

\subsection{Face diagrams}\label{ssec:61}
For every permutation $u \in S_n$ there exists a dual Kogan face $F$ such that $w(F) = u$. Moreover, there can be more than one such Kogan face; they correspond to reduced subwords in
\[
\mathbf{w}_0=(s_{n-1},\dots,s_{1}, s_{n-2},\dots,s_{1},\dots, s_1,s_2,s_1)
\]
with the product equal to $u$.

In this section we show how different diagrams of dual Kogan faces with the same permutation are related to each other. This is done in Lemma~\ref{lem:61} and Lemma~\ref{lem:62}. As we mentioned before, (reduced) dual Kogan faces bijectively correspond to (reduced) pipe dreams; in terms of pipe dreams these lemmas are proved in~\cite{BergeronBilley93}, but we still provide their proofs for for the sake of completeness of exposition. Then we use them  in \S~\ref{ssec:62} to prove Theorem~\ref{thm:main2}.

\subsection{Three lemmas about dual Kogan faces}

\begin{lemma}\label{lem:61} Let $F$ and $G$ be two dual Kogan faces corresponding to permutations $w_F$ and $w_G$ respectively. If their diagrams are obtained one from another by moving one edge as shown in figure below, then we have $w_F = w_G$.
$$
\begin{tikzpicture}

\draw [very thick] (3,3) -- (4,4);
\draw [very thick] (10,0) -- (11,1);

\foreach \x in {0, 1, 2, 3, 4} {
\filldraw (\x,\x) circle (2pt);
\filldraw (\x + 2,\x) circle (2pt);
}

\draw (1,1) -- (3,3);

\draw (3, 1) -- (5, 3);
\draw [dashed] (2, 0) -- (3, 1);
\draw (0.5, 0.5) node {$\times$};
\draw (5.5, 3.5) node {$\times$};
\draw [<->, rounded corners=15pt, gray, dashed] (3.6, 3.5) -- (3.5, 2.5) -- (2.5, 1.5) -- (2.4, 0.5);

 \foreach \x in {0, 1, 2, 3, 4} {
\filldraw (\x + 8,\x) circle (2pt);
\filldraw (\x + 10,\x) circle (2pt);
 }

\draw (9,1) -- (11,3);
\draw (11, 1) -- (13, 3);

\draw [dashed] (11, 3) -- (12, 4);
\draw (8.5, 0.5) node {$\times$};
\draw (13.5, 3.5) node {$\times$};
\draw [<->, rounded corners=15pt, gray, dashed] (11.6, 3.5) -- (11.5, 2.5) -- (10.5, 1.5) -- (10.4, 0.5);

\draw [<->] (6, 2) -- (8, 2);

\end{tikzpicture}
$$
\end{lemma}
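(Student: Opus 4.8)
Since $w(F)=w_0\,w^-(F)$ for a reduced face, it suffices to show that the two diagrams give the same permutation $w^-$. I would work with the pipe dream (wiring diagram) description recalled in the Remark after Figure~\ref{fig:gz3-dualfaces}: each edge $y_{ij}=y_{i+1,j-1}$ becomes a cross, elbows are placed everywhere else, and $w^-(F)$ is the permutation read off by following the pipes. The move in the figure changes the diagram only inside the bounded region delimited by the two marked crosses $\times$; outside it the two diagrams coincide. Consequently every pipe enters and leaves this region along the same boundary segments in both cases, and it is enough to check that the bijection between the incoming and outgoing boundary segments induced by following the pipes through the region is identical for $F$ and $G$.

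The heart of the argument is this local verification. The thick edge is a single crossing, sitting at the top of the ladder for $F$ and at its bottom for $G$; the two chains of edges forming the rails of the ladder, together with the two marked crosses, pin down the pipes feeding the region. I would trace the two pipes $A$ and $B$ that meet at the movable crossing: for $F$ they cross first and then run down the rails, whereas for $G$ they run down and cross at the bottom. What must be checked is that the crossings of $A$ and $B$ with the rail pipes along the way match up, so that $A$ and $B$ exit the region through the same pair of boundary segments in both configurations while every rail pipe keeps its endpoints. This is a finite computation, which I would first carry out for a ladder with a single rung and then propagate.

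For a ladder of arbitrary length I would induct on the number of rungs, sliding the movable crossing down one rung at a time; each elementary step is a local move that, on the reading word $\mathbf{w}^-$ (taken bottom-to-top, right-to-left), is realized by braid relations $s_is_{i+1}s_i=s_{i+1}s_is_{i+1}$ and commutations $s_is_j=s_js_i$ with $|i-j|\ge 2$, all of which leave the product in $S_n$ unchanged. The main obstacle is bookkeeping: because the edges carry the labels $s_{n-j}$ and are read in the reversed order, it is delicate to identify correctly which simple transpositions occur and in which positions, and hence to confirm that the local rearrangement is genuinely a composition of Coxeter relations rather than an accidental change of permutation. Once the elementary single-rung move is pinned down, the induction on the ladder length is routine.
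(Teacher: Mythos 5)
Your second route --- realizing the move as a chain of braid relations and commutations applied to the reading word $\mathbf{w}^-$ --- is exactly the proof given in the paper; the pipe-dream framing of your first paragraph is a legitimate alternative (it is the Bergeron--Billey ladder move, which the paper cites for precisely this purpose) but is not needed and you do not carry it out. The ``bookkeeping'' you defer is the entire content of the argument, so let me record that it does work and where the hypotheses in the figure enter. Reading bottom to top and right to left, the two rails of the ladder lie on adjacent diagonals, so in each row they contribute an \emph{adjacent} pair $s_{j}s_{j+1}$ in the word; the transported letter starts as $s_i$ at the top of the ladder, commutes past the letters separating it from the topmost rail pair, absorbs a braid relation $s_is_{i+1}s_i=s_{i+1}s_is_{i+1}$, re-emerges as $s_{i+1}$, and repeats, its index growing by one per rung until it lands at the target position at the bottom, whose label is indeed larger by the number of rungs. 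The commutation steps at the two ends of this chain are exactly where the positions marked $\times$ are used --- a point your proposal never mentions: the position immediately beyond the movable edge in its row carries the label $s_{i-1}$, and the position immediately beyond the target in its row carries a label differing by one from the arriving letter, and neither of these would commute with the letter being transported if it were occupied. The lemma's hypothesis that both $\times$-positions are empty is what guarantees that every intervening letter has index differing by at least two from the moving one, so that all the required commutations are genuine Coxeter commutations. Any complete write-up must invoke these two exclusions explicitly; with them, your plan closes up into the paper's proof.
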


\begin{proof} Recall that simple transpositions correspond to the edges of the diagram as shown below:

$$
\begin{tikzpicture}

\draw [very thick] (3,3) -- node [anchor=south east] {$s_i$} (4,4);

    \foreach \x in {0, 1, 2, 3, 4} {
\filldraw (\x,\x) circle (2pt);
\filldraw (\x + 2,\x) circle (2pt);
}

\draw (1,1) -- node [anchor=south east] {$s_{i + 2}$}(2, 2) -- node [anchor=south east]  {$s_{i + 1}$}(3,3);

\draw (3, 1) -- node [anchor=south east] {$s_{i + 1}$} (4, 2) -- node [anchor=south east] {$s_i$} (5, 3);
\draw [dashed] (2, 0) -- node [anchor=south east] {$s_{i + 2}$} (3, 1);
\draw (0.5, 0.5) node [anchor=south east] {$s_{i + 3}$} node {$\times$};
\draw (5.5, 3.5) node [anchor=south east] {$s_{i - 1}$} node {$\times$};
\draw [<->, rounded corners=15pt, gray, dashed] (3.6, 3.5) -- (3.5, 2.5) -- (2.5, 1.5) -- (2.4, 0.5);
\end{tikzpicture}
$$

It follows that reading this diagram from bottom to top from right to left we get the word
\[ w_F = \ldots s_{i+1} s_{i+2} \ldots s_i s_{i+1} \ldots \underline{s_i} \ldots.
\]
(here we underline the letter corresponding to the edge we are moving).

Since simple transpositions satisfy braid relations, we get a word for face $G$: 
\begin{multline*}
	w_F =\ldots s_{i+1} s_{i+2} \ldots s_i s_{i+1} \ldots \underline{s_i} \ldots  = \ldots s_{i+1} s_{i+2} \ldots s_i s_{i+1} \underline{s_i} \ldots=\\
= \ldots s_{i+1} s_{i+2} \ldots \underline{s_{i+1}}  s_i s_{i+1} \ldots = \ldots \underline{s_{i+2}}s_{i+1} s_{i+2} \ldots   s_i s_{i+1} \ldots = \\
\underline{s_{i+2}}  \ldots s_{i+1} s_{i+2} \ldots   s_i s_{i+1} \ldots = w_G.
\end{multline*} 
\end{proof}

\begin{definition}\label{def:rightdiag} A diagram is called \emph{right-adjusted} if all its edges are pushed towards the right side.
\end{definition}
Note that every right-adjusted diagram corresponds to a word of the form
$$
\mathbf{u} = (\ldots,s_3,s_{4}, \ldots,s_m, s_2, s_{3}, \ldots, s_k,s_1 s_{2}, \ldots, s_r).
$$ 
Such a word is called \emph{canonical}. It is easy to see that this canonical word is reduced. It follows that there exists a unique right-adjusted diagram for every permutation.

\begin{lemma}\label{lem:62}  Let $F$ and $G$ be two dual Kogan faces such that $w(F) = w(G)$. Then the diagram for $F$ is obtained from the diagram for $G$ by applying transformations described in Lemma~\ref{lem:61}.
\end{lemma}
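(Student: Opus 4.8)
The plan is to reduce everything to the \emph{right-adjusted} diagram of Definition~\ref{def:rightdiag}, which by the discussion immediately following that definition is the \emph{unique} diagram realizing a given permutation. Since the edge-moves of Lemma~\ref{lem:61} are reversible (the figure there is symmetric under $\leftrightarrow$) and preserve the associated permutation, it suffices to prove the following claim: every reduced dual Kogan diagram can be transformed into its right-adjusted form by a finite sequence of the moves of Lemma~\ref{lem:61}. Granting this, if $w(F)=w(G)$, then the right-adjusted diagrams $R_F$ and $R_G$ obtained from $F$ and $G$ carry the same permutation, since the moves leave it unchanged; hence by uniqueness $R_F=R_G$. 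Composing the moves taking $F$ to $R_F$ with the reverses of the moves taking $G$ to $R_G$ then yields a sequence of moves carrying $F$ to $G$, which is exactly the assertion of Lemma~\ref{lem:62}.

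To establish the claim I would argue by a termination (descent) argument. First note that each move of Lemma~\ref{lem:61} preserves both the number of edges and the permutation, hence also the reducedness of the diagram; thus every diagram produced along the way is again a reduced dual Kogan diagram for the same permutation $u=w^-(F)$. Next I would attach to every diagram $D$ a nonnegative integer statistic $\rho(D)$ measuring the total failure of its edges to be pushed to the right — for instance, the sum over all edges of the horizontal distance from the edge to the right boundary of the triangular array, so that $\rho(D)=0$ holds exactly when $D$ is right-adjusted. The core of the argument is then to show that whenever $\rho(D)>0$ there is a local configuration of the shape appearing on one side of the figure in Lemma~\ref{lem:61}, to which the corresponding move applies and strictly decreases $\rho$. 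As $\rho$ takes values in a bounded set of nonnegative integers, iterating such moves terminates, necessarily at the right-adjusted diagram.

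The hard part will be precisely this local-solvability step: one must show that a diagram which is not yet right-adjusted always contains a not-yet-adjusted edge surrounded by exactly the staircase pattern of edges and crosses required to trigger a move of Lemma~\ref{lem:61}. I would prove this by choosing the edge to move greedily — say, the topmost among the rightmost edges that are blocked from sliding further right — and then reading off from the canonical word $\mathbf{u}$ which letters $s_i, s_{i+1}, s_{i+2}$ are forced to occupy adjacent positions, thereby exhibiting the braid-and-commutation pattern used in the proof of Lemma~\ref{lem:61}. Once the availability of a move and its effect on $\rho$ are verified, the termination argument together with the uniqueness of the right-adjusted diagram closes the proof. Equivalently, one may invoke the corresponding statement for reduced pipe dreams from~\cite{BergeronBilley93} through the bijection between dual Kogan faces and pipe dreams noted earlier; but carrying out the descent argument directly keeps the exposition self-contained.
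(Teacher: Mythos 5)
Your proposal follows essentially the same route as the paper: reduce to the unique right-adjusted diagram (using that the moves of Lemma~\ref{lem:61} are reversible and permutation-preserving), then repeatedly apply a greedily chosen move to a blocked edge until the right-adjusted form is reached. The paper's own proof is in fact terser than yours --- it picks the lowest rightmost edge with no edge immediately to its right and asserts the move applies because the word is reduced --- so your explicit potential function $\rho$ and your flagging of the local-solvability step as the point needing verification only make the same argument more careful.
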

\begin{proof} Note that it is enough to prove this lemma for a face $G$ with the right-adjusted diagram. Suppose the diagram of $F$ is not right-adjusted. It follows that there exists an edge such that there is no edge immediately to the right of it. Let $e_0$ be the lowest rightmost edge with such a property. Since a word of simple transpositions is necessarily reduced, then we can move this edge several rows down using Lemma~\ref{lem:61}. Continuing this procedure, we will get a right-adjusted diagram.
\end{proof}

\begin{lemma}\label{lem:63} Suppose that the diagram of face $F$ is right-adjusted. Then permutation $w(F)$ is equal to the permutation $u$ obtained by reading the empty places of diagram $F$ from  bottom to top from left to right in the following way:
$$
\begin{tikzpicture}

\foreach \x in {1, 2, 3, 4}
\filldraw (2*\x, 0) circle (1pt);

\foreach \x in {1, 2, 3}
\filldraw (2*\x +1, -1)  circle (1pt);

\foreach \x in {1, 2, 3}
\draw (-2*\x +9.5, -0.5) node{$s_{\x}$};

\foreach \x in {1, 2}
\filldraw (2*\x +2, -2)  circle (1pt);

\foreach \x in {1, 2}
\draw (-2*\x +8.5, -1.5) node{$s_{\x}$};

\filldraw (5, -3)  circle (1pt);
\draw (5.5, -2.5) node{$s_{1}$};
    
\end{tikzpicture}
$$
\end{lemma}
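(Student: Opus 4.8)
The plan is to prove the identity $w(F)=u$ at the level of words, by reducing it to a statement about reduced words for $w_0$ and then inducting on $n$. First I would fix coordinates for the diagram, indexing each NE-edge slot by the row $i$ and position $j$ of its lower endpoint, so that an edge in that slot carries the letter $s_{n-j}$ (as in \S\ref{ssec:dualkogan}), while the reading of Lemma~\ref{lem:63} assigns to the same slot the letter $s_{\,n-i-j+1}$. Since $F$ is right-adjusted (Definition~\ref{def:rightdiag}), in the $i$-th gap the edges occupy the rightmost $e_i$ slots and the empty places the leftmost ones. Concatenating the gaps from the bottom upward, I record $w^-(F)=E_{n-1}\cdots E_1$ with the increasing runs $E_i=s_i s_{i+1}\cdots s_{i+e_i-1}$ (this is exactly the canonical word of Lemma~\ref{lem:62}) and $u=U_{n-1}\cdots U_1$ with the decreasing runs $U_i=s_{n-i}s_{n-i-1}\cdots s_{e_i+1}$. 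Because $w(F)=w_0\,w^-(F)$ and $w_0^{2}=\mathrm{id}$, the statement $w(F)=u$ is equivalent to the purely combinatorial identity $w^-(F)=w_0\,u$, which is what I would establish.

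The computation rests on two elementary facts about a single gap, both forced by right-adjustedness. Reading an entire gap via the empty-place labels gives the full descending run $s_{n-i}\cdots s_1=c_{n-i}$, which splits as $c_{n-i}=U_iV_i$ with $V_i=s_{e_i}\cdots s_1$; in particular, for the whole polytope ($e_i\equiv0$) one recovers $c_1c_2\cdots c_{n-1}=w_0$ by~\eqref{eq:w0}. Moreover the edge run and the deleted empty run of the top gap are mutually inverse, $V_1=E_1^{-1}$, so that
\[
c_{n-1}E_1=U_1V_1E_1=U_1 .
\]
I will also use that conjugation by $c_{n-1}$ realizes the index shift $\sigma\colon s_k\mapsto s_{k+1}$ on $\langle s_1,\dots,s_{n-2}\rangle$, i.e. $c_{n-1}^{-1}v\,c_{n-1}=\sigma(v)$, together with the relation $c_{n-1}^{-1}w_0^{(n-1)}=w_0^{(n-1)}c_{n-1}$, where $w_0^{(n-1)}$ denotes the longest element of $\langle s_1,\dots,s_{n-2}\rangle$; this last identity is equivalent to $w_0^{(n-1)}c_{n-1}w_0^{(n-1)}=c_{n-1}^{-1}$ and is checked by a direct computation.

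The induction peels off the top gap. Deleting it leaves the diagram of a right-adjusted dual Kogan face $F'$ for $S_{n-1}$; a short check of the labels shows that the empty-place word of the remaining slots is literally the $S_{n-1}$ word $u'$, whereas the edge word becomes the shift $\sigma((w^-)')$. Using $w_0^{(n)}=w_0^{(n-1)}c_{n-1}$ from~\eqref{eq:w0}, the factorizations $w^-(F)=\sigma((w^-)')\,E_1$ and $u=u'\,U_1$, and the induction hypothesis $(w^-)'=w_0^{(n-1)}u'$, I then compute
\[
\sigma\big((w^-)'\big)E_1=c_{n-1}^{-1}(w^-)'\,c_{n-1}E_1=c_{n-1}^{-1}(w^-)'\,U_1=c_{n-1}^{-1}w_0^{(n-1)}u'\,U_1=w_0^{(n-1)}c_{n-1}\,u'\,U_1=w_0\,u,
\]
which is exactly $w^-(F)=w_0\,u$; the base cases $n=1,2$ are immediate.

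The genuinely delicate point — the main obstacle — is the bookkeeping of this peeling step: one must verify precisely that the empty-place labels survive unshifted while the edge labels are shifted by $\sigma$, and that right-adjustedness is inherited by $F'$, so that the two relations $c_{n-1}E_1=U_1$ and $c_{n-1}^{-1}w_0^{(n-1)}=w_0^{(n-1)}c_{n-1}$ can be applied in the right order. Conceptually all of this is the pipe-dream tracing underlying Lemmas~\ref{lem:61}--\ref{lem:63}: under the correspondence recalled in \S\ref{ssec:dualkogan}, edges are crosses and empty places are elbows, and for a right-adjusted (crosses pushed to the right) pipe dream the pipes turn exactly at the empty slots, so that reading the elbows in the prescribed order traces out the one-line notation of $w(F)$; this is the pipe-dream argument of~\cite{BergeronBilley93}, and the relabeling $s_{n-j}\mapsto s_{n-i-j+1}$ in Lemma~\ref{lem:63} is designed precisely to record that sequence of turns.
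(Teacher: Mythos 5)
Your proof is correct and takes essentially the same route as the paper's: both argue by induction on $n$, peeling off the first row of the diagram, reducing the inductive step to the cancellation $(s_{n-1}\cdots s_1)(s_1\cdots s_k)=s_{n-1}\cdots s_{k+1}$ (your $c_{n-1}E_1=U_1$) combined with an index shift on the sub-diagram and a parabolic factorization of $w_0$. The only real difference is presentational --- you verify the shift and the commutation $c_{n-1}^{-1}w_0^{(n-1)}=w_0^{(n-1)}c_{n-1}$ by explicit conjugation identities in $S_n$, whereas the paper reads the same relations off a wiring diagram.
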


\begin{proof} First we recall that the word $u$ for the face $F$ described in \S\;\ref{ssec:gz} obtained by reading a diagram from bottom to top from right to left. The permutation $w(F)$ is equal to $w_0u$, there $w_0 \in S_n$ is the longest permutation.

The proof is by induction on $n$. For $n = 1$ there is nothing to prove.
We denote by $w_0' \in S_{n-1}$ the longest permutation, here $S_{n - 1}$ is generated by $s_2, s_3, \ldots, s_{n - 1}$. Let the word for the face $F$ be equal to $u = u' (s_1 s_2 \ldots s_k)$, with exactly $k$ edges in the first row, and let $u' \in S_{n-1}$ be obtained by reading rows starting from the second one. Then $w(F)$ is equal to $w(F) = w_0u = (s_{n-1}\ldots s_2 s_1)w_0'u'(s_1 s_2 \ldots s_k)$. The permutation $w_0'u'$ corresponds to a smaller diagram obtained by restricting our diagram on rows starting from the second. 
Graphically this is shown on Figure~\ref{fig:wires} (left).
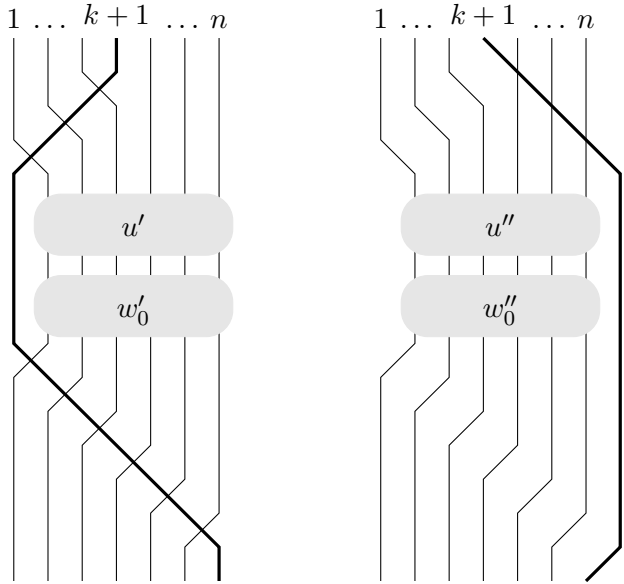
\begin{figure}[h!]

\begin{tikzpicture}[scale = 0.9]
    \draw (0,0) node [above] {$1$} -- (0, -1.5) -- (0.5, -2) -- (0.5, -4.5)-- (0, -5) -- (0, -8);
    \draw (0.5,0) -- (0.5, -1) -- (1, -1.5) -- (1, -5) -- (0.5, -5.5) -- (0.5, -8);
    \draw (1,0) node [above left] {$\dots$} -- (1, -0.5) -- (1.5, -1) -- (1.5, -5.5) -- (1, -6) -- (1, -8);
    \draw (2, 0)  -- (2, -6) -- (1.5, -6.5) -- (1.5, -8);
    \draw (2.5, 0) node [above] {$\dots$}  -- (2.5, -6.5) -- (2, -7) -- (2, -8);
    \draw (3, 0) node [above] {$n$} -- (3, -7) -- (2.5, -7.5) -- (2.5, -8);

    \filldraw[black!10!white] [rounded corners=10pt] (1.5, -2.3) -- (3.2, -2.3) -- (3.2, -3.2) -- (0.3, -3.2) -- (0.3, -2.3) -- (1.5, -2.3);

    \node at (1.75, -2.75) {$u'$};

    \filldraw[black!10!white] [rounded corners=10pt] (1.5, -3.5) -- (3.2, -3.5) -- (3.2, -4.4) -- (0.3, -4.4) -- (0.3, -3.5) -- (1.5, -3.5);

    \node at (1.75, -4) {$w_0'$};

    \draw[very thick] (1.5,0) node [above] {$k + 1$} -- (1.5, -0.5) -- (0, -2) -- (0, -4.5) -- (3, -7.5) -- (3, -8);
    
\end{tikzpicture}
\qquad\qquad 
\begin{tikzpicture}[scale = 0.9]
    \draw (0,0) node [above] {$1$} -- (0, -1.5) -- (0.5, -2) -- (0.5, -4.5)-- (0, -5) -- (0, -8);
    \draw (0.5,0) -- (0.5, -1) -- (1, -1.5) -- (1, -5) -- (0.5, -5.5) -- (0.5, -8);
    \draw (1,0) node [above left] {$\dots$} -- (1, -0.5) -- (1.5, -1) -- (1.5, -5.5) -- (1, -6) -- (1, -8);
    \draw (2, 0)  -- (2, -6) -- (1.5, -6.5) -- (1.5, -8);
    \draw (2.5, 0) node [above] {$\dots$}  -- (2.5, -6.5) -- (2, -7) -- (2, -8);
    \draw (3, 0) node [above] {$n$} -- (3, -7) -- (2.5, -7.5) -- (2.5, -8);

    \filldraw[black!10!white] [rounded corners=10pt] (1.5, -2.3) -- (3.2, -2.3) -- (3.2, -3.2) -- (0.3, -3.2) -- (0.3, -2.3) -- (1.5, -2.3);

    \node at (1.75, -2.75) {$u''$};

    \filldraw[black!10!white] [rounded corners=10pt] (1.5, -3.5) -- (3.2, -3.5) -- (3.2, -4.4) -- (0.3, -4.4) -- (0.3, -3.5) -- (1.5, -3.5);

    \node at (1.75, -4) {$w_0''$};

    \draw[very thick] (1.5,0) node [above] {$k + 1$} -- (3.5, -2) -- (3.5, -7.5) -- (3, -8);
\end{tikzpicture}
\caption{Wiring diagrams}\label{fig:wires}
\end{figure}

We denote by $u''$ and $w_0''$ permutations obtained by replacing every $s_i$ by $s_{i -1}$ in reduced words for $u'$ and $w_0'$. Since $u', w_0' \in S_{n-1}$, where $S_{n-1}$ is generated by $s_2, \ldots, s_{n-1}$,  permutations $u''$ and $w_0''$ are well defined. It follows that $w(F) = w_0u = (s_{n-1}\ldots s_2 s_1)w_0'u'(s_1 s_2 \ldots s_k) = w_0''u'' (s_{n-1} \ldots s_{k +2} s_{k +1})$ (See Figure~\ref{fig:wires}, right).

By the induction hypothesis, the permutation $w_0''u'' (s_{n-1} \ldots s_{k +2} s_{k +1})$ is obtained by reading empty places of diagram $F$ from bottom to top from right to left.
\end{proof}

This lemma will play a key role in the proof of the main result. Note that the empty places marked as shown in Lemma~\ref{lem:63} correspond to actions of Demazure--Lascoux operators described in \S~\ref{ssec:52}.

\begin{remark}
Lemma~\ref{lem:63} is a standard fact about permutations; see, for instance, \cite[R\'emarque~2.1.9]{Manivel98}.	
\end{remark}

\subsection{Proof of Theorem~\ref{thm:main2}}\label{ssec:62}

In this section we show that the restriction of the construction described in Theorem~\ref{thm:main1} to the union of dual Kogan faces gives us an arbitrary Lascoux polynomial.

The main idea of this section is to fix the canonical word for the permutation $u \in S_n$ described in Lemma~\ref{lem:63}. Then alternating monomials will be located in the face $F$ with a right-adjusted diagram. But not all nonalternating monomials will necessarily cancel. Their offspring will be located in other diagrams corresponding to the permutation $u$.

Following Lemma~\ref{lem:62} we recall that any diagram of face $F$ is obtained from the right-adjusted diagram by moving edges. Moreover, we first can move edges to the first row (from left to right), and at each step, the restriction of the diagram to rows starting from the second one will be right-adjusted.

Let us enumerate the diagonals rotated to the right-up in a triangular tableau from left to right. Note that all edges in diagrams of dual Kogan faces will be directed along such diagonals.
\begin{lemma}\label{lem:6.4}
Consider a diagram $F$, where the rows starting from the second one form a right-adjusted diagram corresponding to  permutation $u$, and the first row is filled as follows: places from $1$ to $k$ are filled by edges, places from $k + 1$ to $i > k + 1$ are empty, the remaining places can be filled in any way (see Figure~\ref{fig:tolemma}). Then  the nonalternating monomials appearing at the $i$-th step (that means, with the power of $x_i$ 
bigger than necessary) do not cancel if and only if it is possible to move an edge on the place $i$ in the first row.
\end{lemma}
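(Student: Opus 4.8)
The plan is to reproduce, at the level of a single step, the columnwise cancellation mechanism from the proof of Lemma~\ref{lem:55}: isolate the monomials produced at the $i$-th step, write the ``overflow'' part in the form $g\cdot(\pi_a^{(\beta)}(h)-h)$, decide survival by the idempotency identity $\pi_a^{(\beta)}(\pi_a^{(\beta)}(h)-h)=0$, and then translate the resulting algebraic condition into the edge-move language via Lemmas~\ref{lem:61}--\ref{lem:63}.

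First I would localize the computation. Since the rows below the first are fixed to a right-adjusted diagram for $u$ and positions $1,\dots,k$ of the first row carry edges, the polynomial present just before the $i$-th step factors, up to a factor symmetric in the two variables on which the position-$i$ operator $\pi_a^{(\beta)}$ acts, as $\pi_a^{(\beta)}$ applied to a single model monomial $x^\mu$. Expanding $\pi_a^{(\beta)}(f)=\partial_a(x_af+\beta x_ax_{a+1}f)$ on $x^\mu$ splits the output into a $\lambda$-alternating part, whose exponent of $x_i$ respects the bound imposed by $\lambda$ and which therefore stays inside the right-adjusted face $F$, and an overflow part originating from the summand $\beta x_ax_{a+1}$, whose exponent of $x_i$ exceeds the admissible value by exactly one. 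This is precisely the set of ``nonalternating monomials appearing at the $i$-th step'' named in the statement.

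Next I would rewrite this overflow as $g\cdot(\pi_a^{(\beta)}(h)-h)$ with $h$ a monomial and $g$ symmetric in $x_a,x_{a+1}$, exactly as the ``green columns'' were summed in the proof of Lemma~\ref{lem:55}. By idempotency $\pi_a^{(\beta)}\bigl(\pi_a^{(\beta)}(h)-h\bigr)=0$, so, after commuting past the remaining operators acting on variables disjoint from $\{x_a,x_{a+1}\}$ (which pass through the symmetric factor $g$ freely), these monomials are annihilated during the rest of the canonical word precisely when $\pi_a^{(\beta)}$ is applied once more after the $i$-th step; otherwise they persist to the end and contribute to $\cL_{u,\lambda}$. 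Thus survival of the overflow is equivalent to the absence of such a cancelling later letter.

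Finally I would match this dichotomy with the combinatorics of diagrams. By Lemma~\ref{lem:63} the empty place at position $i$ carries a definite simple transposition $s_a$, and by Lemmas~\ref{lem:61} and~\ref{lem:62} two diagrams represent $u$ exactly when they are related by edge moves, each realized by a braid transformation of the word that keeps it reduced. I would show that a cancelling occurrence of $s_a$ after position $i$ is forced exactly when position $i$ is \emph{essential}, i.e.\ when no edge can be slid into position $i$ while keeping the lower rows a right-adjusted diagram for $u$; conversely, whenever an edge can be moved to position $i$, the companion diagram $G$ with $w(G)=u$ and an edge at $i$ hosts the surviving monomials as genuine (alternating) contributions of its own sub-face, so no cancellation occurs. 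Combining the two steps yields the stated equivalence. The main obstacle is precisely this last translation in both directions: proving, from the reducedness of the canonical word and the fixed right-adjusted lower rows alone, that the presence of a second $s_a$ after position $i$ is equivalent to the \emph{impossibility} of the edge move, and controlling the still-unconstrained positions $>i$ so that they neither manufacture nor destroy the relevant cancelling letter. This requires a careful length and subword bookkeeping layered on top of Lemmas~\ref{lem:61}--\ref{lem:63}, and it is where the bulk of the work lies.
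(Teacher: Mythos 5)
Your proposal follows essentially the same route as the paper: group the overflow monomials of the $i$-th step into packets of the form $g\cdot\bigl(\pi_{i-1}^{(\beta)}(h)-h\bigr)$ with $g$ symmetric in the relevant variables, kill them by idempotency whenever $\pi_{i-1}^{(\beta)}$ can be brought to act first on the remainder of the word, i.e.\ whenever $u$ admits a reduced expression beginning with $s_{i-1}$, and identify the surviving packets with contributions of the companion diagram having an edge at place $i$. The one step you defer as ``careful length and subword bookkeeping'' is exactly where the paper inserts its short wiring-diagram argument: $u$ can begin with $s_{i-1}$ precisely when strands $i-1$ and $i$ cross, which is the negation of being able to slide an edge into place $i$ (and in the non-crossing case the identity $\pi_u^{(\beta)}\pi_{i-1}^{(\beta)}(m)=\pi_{u'}^{(\beta)}(m)$ places the survivors in the diagram $u'$), so your outline is correct and matches the intended proof.
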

\begin{figure}[h!]
\begin{tikzpicture}
    \scriptsize

    \filldraw[black!10!white] [rounded corners=10pt] (3, -0.75) -- (5.25, -0.75) -- (3, -3) -- (0.75, -0.75) -- (3, -0.75);
    
    \foreach \x in {0, 1, 2, 3, 4, 5, 6}
    \filldraw (\x, 0) circle (1.5pt);

    \foreach \x in {0, 1, 2, 3, 4, 5}
    \filldraw (\x + 0.5, -0.5) circle (1.5pt);

    \draw (6, 0) -- (5.5, -0.5);
    \draw (5, 0) -- (4.5, -0.5);

    \draw (3.75, -0.25) node {$\times$};
    \draw (2.75, -0.25) node {$\times$};
    \draw (1.75, -0.25) node {$\star$};
    \draw (0.75, -0.25) node {$\star$};
    \small
    \draw (3, 0) node [above = 1pt] {$i$};
    \draw (4, 0) node [above] {$i - 1$};
    \large
    \draw (3, -1.5) node {$u$};
\end{tikzpicture}
\caption{To Lemma~\ref{lem:6.4}}\label{fig:tolemma}
\end{figure}

\begin{proof} Following Example~\ref{ex:51}, recall that nonalternating monomials could be divided into groups of the form $\pi_{i-1}^{(\beta)}(m) - m$, there $m$ is an alternating monomial with the maximal allowed power of $x_i$ (located on the diagonal in Figure~\ref{fig:altnonalt2}.
\begin{figure}[h!]
\begin{tikzpicture}[rotate = 180]
\footnotesize
    \filldraw[teal!10!white] [rounded corners=10pt](4.8, 2) -- (4.8, 3.2) -- (7.8, 3.2) -- (4.8, 0.1) -- (4.8, 2);

    \draw (5, 0) -- (5,3) -- (10,3) -- (7,0) -- (5,0);
    \draw (5,0) -- (8,3);
    \draw (6,0) -- (9,3);
    \draw (5,1) -- (8,1);
    \draw (5,2) -- (9,2);
    \draw (6,1) -- (6,3);
    \draw (7,2) -- (7,3);
    \draw [NavyBlue, very thick] (5,0) -- (8,3);

    \filldraw [NavyBlue](5,0) node [right] {$x_{i-1}^a x_{i}^b$} circle (2pt);
    \filldraw (5,1) circle (2pt);
    \filldraw  (5,2) circle (2pt);
    \filldraw  (5,3) node [right] {$x_{i - 1}^b x_{i}^a$} circle (2pt);
    \draw (4.5, 1.5) node [rotate = 90] {$\ldots$};

    \filldraw  [NavyBlue](8,3) node [below] {$x_{i - 1}^b x_{i}^b x_{i+1}^{a-b}$} circle (2pt);
    \filldraw  (10,3) node [below] {$x_{i - 1}^b x_{i + 1}^a$} circle (2pt);
    \filldraw  (7,0) node [above] {$x_{i - 1}^a x_{i + 1}^b$} circle (2pt);
\end{tikzpicture}
\caption{Alternating and nonalternating monomials, general case}\label{fig:altnonalt2}
\end{figure}
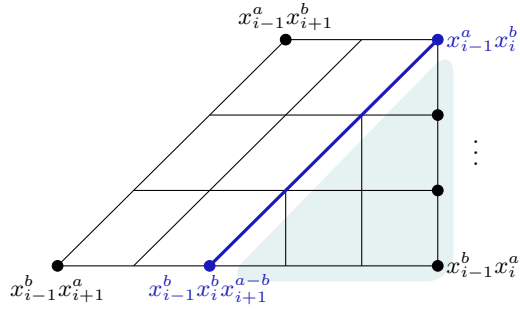

Such monomials vanish under the action of  operator $\pi_{i-1}^{(\beta)}$. Since $\pi_{i-1}^{(\beta)}$ commutes with operators $\pi_{i+ 1}^{(\beta)}, \ldots, \pi_{n-1}^{(\beta)}$ (that correspond to the last part of the first row), we should check whether a permutation $u$ can start with a transposition $s_{i-1}$.

Recall that $u = \ldots \cdot s_{n-3} \ldots s_{k_2} \cdot s_{n-2} \ldots s_{k_1}$. In braid terms this means that first the $k_1$-th strand is going down to the position $n-1$, then the $k_2$-th strand is going down  to the position $n-2$, and so on. Then we can put transposition $s_{i-1}$ on the first place if and only if the $(i-1)$-th and $i$-th strands intersect each other or, equivalently, the $(i-1)$-th line goes to the end earlier than $i$-th line or, equivalently, there is no edge in the corresponding place in the diagram. On the other hand, if $(i-1)$-th and $i$-th strands do not intersect, we can move the edge to the first row and intersect lines. Suppose the new diagram corresponds to permutation $u'$. It is easy to see that $\pi_u^{(\beta)}(\pi_{i-1}^{(\beta)}(m))$ coincides with $\pi_{u'}^{(\beta)}(m)$. It follows that the offsprings of $\pi_{i-1}^{(\beta)}(m)$ will be located in diagram $u'$. Since in this case we consider monomial $m$, we should add an edge to the $i$-th place in the first row.

$$
\begin{tikzpicture}
\footnotesize
    \draw (0,0) node [above] {$n-1$} -- (0, -0.5) -- (0.5, -1) -- (0.5, -2.5) -- (1.5, -3.5) -- (1.5, -4);
    \draw (0.5, 0) -- (0.5, -0.5) -- (0, -1) -- (0, -4);
    \draw (2, 0) -- (2, -1) -- (2.5, -1.5) -- (2.5, -4);
    \draw (2.5, 0) node [above] {$1$} -- (2.5, -1) -- (0.5, -3) -- (0.5, -4);
    \draw [very thick] (1, 0)  node [above = 1pt] {$i$} -- (1, -2) -- (1.5, -2.5) -- (1.5, -3) -- (1, -3.5) -- (1, -4);
    \draw [very thick] (1.5, 0) node [above] {$i-1$} -- (1.5, -1.5) -- (2, -2) -- (2, -4);

    \draw [dashed] (1.5, -2.5) -- (2, -3);
    \draw [dashed] (2, -2.5) -- (1.5, -3);

    \draw [dashed] (1, 0) -- (1.5, -0.5);
    \draw [dashed] (1.5, 0) -- (1, -0.5);

\end{tikzpicture}
$$
\end{proof}

Proof of Theorem~\ref{thm:main2} is obtained by filling the diagram row by row from top to bottom from left to right and applying Lemma~\ref{lem:6.4} at each step.


\begin{thebibliography}{BSW20}

\bibitem[And85]{Andersen85}
Henning~Haahr Andersen.
\newblock Schubert varieties and {D}emazure's character formula.
\newblock {\em Invent. Math.}, 79(3):611--618, 1985.

\bibitem[BB93]{BergeronBilley93}
Nantel Bergeron and Sara Billey.
\newblock R{C}-graphs and {S}chubert polynomials.
\newblock {\em Experiment. Math.}, 2(4):257--269, 1993.

\bibitem[BSW20]{BSW20}
Valentin Buciumas, Travis Scrimshaw, and Katherine Weber.
\newblock Colored five-vertex models and {L}ascoux polynomials and atoms.
\newblock {\em J. Lond. Math. Soc. (2)}, 102(3):1047--1066, 2020.

\bibitem[Buc02]{Buch02}
Anders~Skovsted Buch.
\newblock A {L}ittlewood-{R}ichardson rule for the {$K$}-theory of
  {G}rassmannians.
\newblock {\em Acta Math.}, 189(1):37--78, 2002.

\bibitem[Dem74]{Demazure74}
Michel Demazure.
\newblock Une nouvelle formule des caract\`eres.
\newblock {\em Bull. Sci. Math. (2)}, 98(3):163--172, 1974.

\bibitem[FK94]{FominKirillov94}
Sergey Fomin and Anatol~N. Kirillov.
\newblock Grothendieck polynomials and the {Y}ang-{B}axter equation.
\newblock In {\em Proc. Formal Power Series and Alg. Comb}, pages 183--190,
  1994.

\bibitem[FK96]{FominKirillov96}
Sergey Fomin and Anatol~N. Kirillov.
\newblock The {Y}ang-{B}axter equation, symmetric functions, and {S}chubert
  polynomials.
\newblock In {\em Proceedings of the 5th {C}onference on {F}ormal {P}ower
  {S}eries and {A}lgebraic {C}ombinatorics ({F}lorence, 1993)}, volume 153,
  pages 123--143, 1996.

\bibitem[GC50]{GelfandZetlin50}
Israel~M. Gelfand and Mikhail~L. Cetlin.
\newblock Finite-dimensional representations of the group of unimodular
  matrices.
\newblock {\em Doklady Akad. Nauk SSSR (N.S.)}, 71:825--828, 1950.

\bibitem[GL96]{GonciuleaLakshmibai96}
Nicolae Gonciulea and V.~Lakshmibai.
\newblock Degenerations of flag and {S}chubert varieties to toric varieties.
\newblock {\em Transform. Groups}, 1(3):215--248, 1996.

\bibitem[HHL08]{HaglundHaimanLoehr08}
Jim Haglund, Mark Haiman, and Nicholas Loehr.
\newblock A combinatorial formula for nonsymmetric {M}acdonald polynomials.
\newblock {\em Amer. J. Math.}, 130(2):359--383, 2008.

\bibitem[KM04]{KnutsonMiller04}
Allen Knutson and Ezra Miller.
\newblock Subword complexes in {C}oxeter groups.
\newblock {\em Adv. Math.}, 184(1):161--176, 2004.

\bibitem[KM05a]{KnutsonMiller05}
Allen Knutson and Ezra Miller.
\newblock Gr\"{o}bner geometry of {S}chubert polynomials.
\newblock {\em Ann. of Math. (2)}, 161(3):1245--1318, 2005.

\bibitem[KM05b]{KoganMiller05}
Mikhail Kogan and Ezra Miller.
\newblock Toric degeneration of {S}chubert varieties and {G}elfand-{T}setlin
  polytopes.
\newblock {\em Adv. Math.}, 193(1):1--17, 2005.

\bibitem[KST12]{KST}
Valentina Kirichenko, Evgeny Smirnov, and Vladlen Timorin.
\newblock Schubert calculus and {G}elfand-{Z}etlin polytopes.
\newblock {\em Uspekhi Mat. Nauk}, 67(4(406)):89--128, 2012.

\bibitem[Las04]{Lascoux04}
Alain Lascoux.
\newblock Schubert \& {G}rothendieck: un bilan bid\'{e}cennal.
\newblock {\em S\'{e}m. Lothar. Combin.}, 50:Art. B50i, 32, 2003/04.

\bibitem[LS90]{LascouxSchutzenberger90}
Alain Lascoux and Marcel-Paul Sch\"{u}tzenberger.
\newblock Keys \& standard bases.
\newblock In {\em Invariant theory and tableaux ({M}inneapolis, {MN}, 1988)},
  volume~19 of {\em IMA Vol. Math. Appl.}, pages 125--144. Springer, New York,
  1990.

\bibitem[Man98]{Manivel98}
Laurent Manivel.
\newblock {\em Fonctions sym\'etriques, polyn\^omes de {S}chubert et lieux de
  d\'eg\'en\'erescence}, volume~3 of {\em Cours Sp\'ecialis\'es [Specialized
  Courses]}.
\newblock Soci\'et\'e Math\'ematique de France, Paris, 1998.

\bibitem[Mas09]{Mason09}
S.~Mason.
\newblock An explicit construction of type {A} {D}emazure atoms.
\newblock {\em J. Algebraic Combin.}, 29(3):295--313, 2009.

\bibitem[PY24]{PanYu23}
Jianping Pan and Tianyi Yu.
\newblock Top-degree components of {G}rothendieck and {L}ascoux polynomials.
\newblock {\em Algebr. Comb.}, 7(1):109--135, 2024.

\bibitem[RY21]{ReinerYong21}
Victor Reiner and Alexander Yong.
\newblock The ``{G}rothendieck to {L}ascoux'' conjecture.
\newblock {\em arXiv preprint arXiv:2102.12399}, 2021.

\bibitem[SY23]{ShimozonoYu23}
Mark Shimozono and Tianyi Yu.
\newblock Grothendieck-to-{L}ascoux expansions.
\newblock {\em Trans. Amer. Math. Soc.}, 376(7):5181--5220, 2023.

\bibitem[Yu23]{Yu21}
Tianyi Yu.
\newblock Set-valued tableaux rule for {L}ascoux polynomials.
\newblock {\em Comb. Theory}, 3(1):Paper No. 13, 31, 2023.

\end{thebibliography}

\end{document}